          \newtheorem{theorem}{Theorem}[section]
      \newtheorem{proposition}[theorem]{Proposition}
      \newtheorem{corollary}[theorem]{Corollary}
      \newtheorem{lemma}[theorem]{Lemma}
      \newcommand{\BB}{{\mathbb B}}
      \newcommand{\CC}{{\mathbb C}}
      \newcommand{\NN}{{\mathbb N}}
      \newcommand{\DD}{{\mathbb D}}
      \newcommand{\RR}{{\mathbb R}}
      \newcommand{\FF}{{\mathbb F}}
      \newcommand{\TT}{{\mathbb T}}
      \newcommand{\HH}{{\mathbb H}}
      \newcommand{\cA}{{\mathcal A}}
      \newcommand{\cD}{{\mathcal D}}
      \newcommand{\cE}{{\mathcal E}}
      \newcommand{\cG}{{\mathcal G}}
      \newcommand{\cH}{{\mathcal H}}
      \newcommand{\cK}{{\mathcal K}}
      \newcommand{\cL}{{\mathcal L}}
      \newcommand{\cR}{{\mathcal R}}
      \newdimen\expt
      \def\boxit#1{\setbox0\hbox{$\displaystyle{#1}$}
            \hbox{\lower.4\expt
       \hbox{\lower3\expt\hbox{\lower\dp0
            \hbox{\vbox{\hrule height.4\expt
       \hbox{\vrule width.4\expt\hskip3\expt
            \vbox{\vskip3\expt\box0\vskip2\expt}%
       \hskip3\expt\vrule width.4\expt}\hrule height.4\expt}}}}}}
\begin{document}
       \pagestyle{myheadings}
      \markboth{ Gelu Popescu}{ Hyperbolic  geometry
       on the unit ball of $B(\cH)^n$ and dilation theory }


      \title [ Hyperbolic  geometry
       on the unit ball of $B(\cH)^n$ and dilation theory]
{      Hyperbolic  geometry
       on the unit ball of $B(\cH)^n$ and dilation theory
       }
        \author{Gelu Popescu}

\date{September 15, 2008}
      \thanks{Research supported in part by an NSF grant}
      \subjclass[2000]{Primary: 46L52;  32F45;  47A20; Secondary: 47A56; 32Q45}
      \keywords{Noncommutative  hyperbolic geometry; Noncommutative function
      theory; Dilation theory;
       Poincar\' e--Bergman metric,
      Harnack part;  Hyperbolic distance; Free holomorphic function;
      Poisson transform; Automorphism group; Row contraction; Fock
      space;
       Schwarz-Pick lemma}

      \address{Department of Mathematics, The University of Texas
      at San Antonio \\ San Antonio, TX 78249, USA}
      \email{\tt gelu.popescu@utsa.edu}

\begin{abstract}

 In this paper we continue our investigation
concerning the hyperbolic geometry  on the noncommutative ball
$$
[B(\cH)^n]_1^-:=\left\{(X_1,\ldots, X_n)\in B(\cH)^n:\ \|X_1
X_1^*+\cdots +X_nX_n^* \|^{1/2} \leq 1\right\},
$$
where $B(\cH)$ is  the algebra
  of all bounded linear operators on a Hilbert space $\cH$,
and  its implications to noncommutative function theory. The central
object is an intertwining operator $L_{B,A}$ of the minimal
isometric dilations of $A, B\in [B(\cH)^n]_1^-$, which establishes a
strong connection between  noncommutative hyperbolic geometry  on
$[B(\cH)^n]_1^-$ and multivariable dilation theory. The goal of this
paper is to study  the operator $L_{B,A}$ and its connections  to
 the hyperbolic metric  $\delta$ on the Harnack parts  $\Delta$ of $[B(\cH)^n]_1^-$.
 In particular, we show that
 \begin{equation*}
\begin{split}
\delta(A,B)&=
  \ln \max \left\{ \left\| L_{A,B} \right\|,
  \left\| L_{A,B}^{-1}\right\|\right\}
\end{split}
\end{equation*}
for any  $A,B\in \Delta$, and express $\|L_{B,A}\|$ in terms of the
reconstruction  operators $R_A$ and $R_B$.  We study the geometric
structure of the operator $L_{B,A}$ and obtain new characterizations
for the Harnack domination (resp.~equivalence) in $[B(\cH)^n]_1^-$.

Finally, given a contractive free holomorphic function
$F:=(F_1,\ldots, F_m)$ with coefficients in $B(\cE)$
 and  $z,\xi\in \BB_n$, the open unit ball of $\CC^n$,  we prove that  $F(z)$ is Harnack equivalent to $F(\xi)$ and
$$
\|L_{F(z), F(\xi)}\|\leq \|L_{z,\xi}\|=
\left(\frac{1+\|\varphi_z(\xi)\|_2}{1-\|\varphi_z(\xi)\|_2}\right)^{1/2},$$
where $\varphi_z$ is the involutive automorphism of $\BB_n$ which
takes $0$ into $z$. This result implies a Schwartz-Pick lemma for
 operator-valued   multipliers  of the  Drury-Arveson space, with respect to the
 hyperbolic metric.
\end{abstract}

      \maketitle

\bigskip

\bigskip

\section*{Introduction}

In \cite{Po-hyperbolic}, we introduced a hyperbolic   metric
$\delta$ on the noncommutative ball
$$
[B(\cH)^n]_1:=\left\{(X_1,\ldots, X_n)\in B(\cH)^n:\ \|X_1
X_1^*+\cdots +X_nX_n^* \|^{1/2} <1\right\}.
$$
 This metric
    is a noncommutative extension  of the Poincar\'
  e-Bergman (\cite{Be}) metric  $\beta_n$  on the open unit ball
$\BB_n:=\{ z\in \CC^n:\ \|z\|_2<1\},$
  which  is defined by
$$
\beta_n(z,w):=\frac{1}{2}\ln
\frac{1+\|\varphi_z(w)\|_2}{1-\|\varphi_z(w)\|_2},\qquad z,w\in
\BB_n,
$$
where $\varphi_z$ is the involutive automorphism of $\BB_n$ that
interchanges $0$ and $z$. We  proved that $\delta$ is  invariant
under the action of  the group  $Aut([B(\cH)^n]_1)$ of all  free
holomorphic automorphisms of $[B(\cH)^n]_1$, and  showed that the
$\delta$-topology and the usual operator norm topology coincide on
$[B(\cH)^n]_1$.  Moreover, we proved that   $[B(\cH)^n]_1$ is a
complete metric space with respect to the hyperbolic metric and
obtained an explicit formula for $\delta$ in terms of the
reconstruction operator.  A Schwarz-Pick lemma for  bounded free
holomorphic functions on $[B(\cH)^n]_1$, with respect to the
hyperbolic metric, was also obtained. The  results from
\cite{Po-hyperbolic} make connections between noncommutative
function theory (see \cite{Po-poisson}, \cite{Po-holomorphic},
\cite{Po-automorphism}, \cite{Po-pluriharmonic}) and   classical
results in hyperbolic complex analysis and geometry (see \cite{Ko1},
\cite{Ko2}, \cite{Kr},  \cite{Ru}, \cite{Zhu}).

In this paper we continue to study   the noncommutative hyperbolic
geometry on the unit ball of $B(\cH)^n$, its connections with
multivariable dilation theory, and its implications
 to noncommutative function theory.

To present our results, we  recall (\cite{Po-hyperbolic})  a few
definitions. Let $A:=(A_1,\ldots, A_n)$   and  $B:=(B_1,\ldots,
B_n)$ be in $[B(\cH)^n]_1^-$.  We say that $A$ is Harnack dominated
by $B$ if there exists a constant $c>0$ such that
$$\text{\rm Re}\,p(A_1,\ldots, A_n)\leq c^2 \text{\rm
Re}\,p(B_1,\ldots, B_n) $$
 for any noncommutative polynomial with
matrix-valued coefficients $p\in \CC[X_1,\ldots, X_n]\otimes M_{m}$,
$m\in \NN$, such that $\text{\rm Re}\,p\geq 0$, i.e., $\text{\rm
Re}\,p(X_1,\ldots,X_n)\geq 0$ for any $(X_1,\ldots, X_n)\in
[B(\cK)^n]_1$, where $\cK$ is an infinite dimensional Hilbert space.
In this case, we denote  $A\overset{H}{{\underset{c}\prec}}\, B$.

In Section 1, we obtain a characterization of the Harnack domination
$A\overset{H}{{\underset{c}\prec}}\, B$ in terms of an intertwining
operator $L_{B,A}$ of the minimal isometric dilations of the row
contractions $A$ and $B$. More precisely, we prove that if
 $V:=(V_1,\ldots, V_n)$ on
$\cK_A\supseteq \cH$ and $W:=(W_1,\ldots, W_n)$ on $\cK_B\supseteq
\cH$ are  the minimal isometric dilations of $A$ and $B$,
respectively, and
 $c\geq1$, then   $A\overset{H}{{\underset{c}\prec}}\, B$ if and
 only if
there is an operator $L_{B,A}\in B(\cK_B, \cK_A)$ with
$\|L_{B,A}\|\leq c$ such that $L_{B,A}|_\cH=I_\cH$ and
\begin{equation*}
\label{L}
 L_{B,A} W_i=V_iL_{B,A},\qquad i=1,\ldots,n.
\end{equation*}
The central object of this paper is the intertwining operator
$L_{B,A}$ which establishes a bridge between
  noncommutative hyperbolic
geometry on the unit ball of $B(\cH)^n$ and    multivariable
dilation theory.

In Section 2, we   obtain some  results concerning the  Harnack
domination on $[B(\cH)^n]_1^-$ and
 the reconstruction operator
 $$
R_X:=X_1^*\otimes R_1+\cdots +X_n^*\otimes R_n, \quad
X:=(X_1,\ldots, X_n)\in [B(\cH)^n]_1,
$$
associated with the right creation operators $R_1,\ldots, R_n$ on
the full Fock space with $n$ generators. Using basic facts about
noncommutative Poisson transforms (\cite{Po-poisson}) on
Cuntz-Toeplitz algebras  (\cite{Cu}), we prove that if  $A$ and $B$
are in $[B(\cH)^n]_1^-$ and $c>0$, then
$A\overset{H}{{\underset{c}\prec}}\, B$ if and only if
$R_A\overset{H}{{\underset{c}\prec}}\, R_B$. This result is needed
in the next sections.

We recall  (\cite{Po-hyperbolic}) that  $A,B\in [B(\cH)^n]_1^-$ are
called Harnack equivalent (and denote
$A\overset{H}{{\underset{c}\sim}}\, B$)  if and only if there exists
$c\geq 1$ such that
\begin{equation*}
 \frac{1}{c^2}\text{\rm Re}\,p(B_1,\ldots, B_n)\leq
\text{\rm Re}\,p(A_1,\ldots, A_n)\leq c^2 \text{\rm
Re}\,p(B_1,\ldots, B_n)
\end{equation*}
for any noncommutative polynomial with matrix-valued coefficients
$p\in \CC[X_1,\ldots, X_n]\otimes M_{m}$, $m\in \NN$, such that
$\text{\rm Re}\,p\geq 0$.
 The equivalence classes with respect to $\overset{H}\sim$ are
called Harnarck parts of $[B(\cH)^n]_1^-$ and are   noncommutative
analogues of the Gleason parts of the Gelfand spectrum of a function
algebra (see \cite{G}). In Section 3, using  noncommutative dilation
theory for row contractions (\cite{Po-isometric},
\cite{Po-charact}), we obtain several results concerning the Harnack
 parts of $[B(\cH)^n]_1^-$.

We introduced in \cite{Po-hyperbolic} a hyperbolic metric $\delta$
on any Harnack part $\Delta$ of $[B(\cH)^n]_1^-$  by setting
\begin{equation*}
 \delta(A,B):=\ln \inf\left\{ c \geq  1: \
A\,\overset{H}{{\underset{c}\sim}}\, B   \right\}, \quad  A,B\in
\Delta.
\end{equation*}
In Section 4, we express the hyperbolic distance in terms of the
operator $L_{B,A}$ by  proving  that
\begin{equation*}
\begin{split}
\delta(A,B)&=
  \ln \max \left\{ \left\| L_{A,B} \right\|,
  \left\| L_{A,B}^{-1}\right\|\right\}
\end{split}
\end{equation*}
for any  $A,B\in \Delta$.
  Using a characterization of the Harnack equivalence  on
$[B(\cH)^n]_1^-$ in terms of free pluriharmonic kernels (see
\cite{Po-hyperbolic}), we  obtain an explicit formula for the  norm
of the intertwining operator $L_{B,A}$ in terms of the
reconstruction operators $R_A$ and $R_B$.
 More precisely,  we prove
that if $A,B\in [B(\cH)^n]_1$, then
$$
\|L_{B,A}\|=\|C_AC_B^{-1}\|,
$$
where $C_X:=( \Delta_X\otimes I)(I-R_X)^{-1}$. Similar result holds
if $A,B\in [B(\cH)^n]_1^-$  and $A\overset{H}{\prec}\, B$.

Section 5 deals with the geometric structure of the operator
$L_{B,A}$.  We show that  if $A,B\in [B(\cH)^n]_1^-$, then
$A\overset{H}{{\prec}}\, B$ if and only if there exit two bounded
linear operators
 $\Omega$ and $\Theta$ satisfying certain  analytic properties such that
$$
L_{B,A}^*=\left[\begin{matrix} I_\cH& 0\\
\Omega &  \Theta
\end{matrix}\right].
$$
As consequences, we obtain new characterizations  for the Harnack
domination (resp.~equivalence) in $[B(\cH)^n]_1^-$.

In Section 6, we obtain  a Schwartz-Pick  lemma for contractive free
holomorphic  functions on $[B(\cH)^n]_1$ with respect to the
intertwining operator $L_{B,A}$. More precisely,  given a
contractive free holomorphic function  $F:=(F_1,\ldots, F_m)$ with
coefficients in $B(\cE)$
 and  $z,\xi\in \BB_n$, we prove that  $F(z)\overset{H}{\sim}\, F(\xi)$ and
$$
\|L_{F(z), F(\xi)}\|\leq \|L_{z,\xi}\|. $$
As a consequence, we
deduce that
$$
\delta(F(z), F(\xi))\leq \delta(z,\xi),
$$
 where $\delta$ is the hyperbolic metric. We mention that the map
 $\BB_n\ni z\mapsto F(x)\in B(\cE)^m$ is a contractive operator-valued
 multiplier of  the  Drury-Arveson space (see \cite{Dr}, \cite{Arv}).

Finally, we want to acknowledge that we were strongly influenced in
writing this paper  and \cite{Po-hyperbolic} by the work of  Foia\c
s concerning Harnack parts of contractions  (\cite{Fo}) and  by the
work of Suciu   on the hyperbolic distance between two contractions
(\cite{Su1}, \cite{Su2}). The present  paper is dedicated to Ciprian
Foia\c s on his $75^{\text{\rm th}}$ anniversary.

\bigskip

\section{Harnack type domination on $[B(\cH)^n]_1^-$ and dilations}

In this section, we obtain  characterizations for  the Harnack
domination $A\overset{H}{{\underset{c}\prec}}\, B$ in terms of
multi-Toeplitz kernels on free semigroups, and in terms of an
intertwining operator $L_{B,A}$ of the minimal isometric dilations
of the row contractions $A$ and $B$.

Let $H_n$ be an $n$-dimensional complex  Hilbert space with
orthonormal
      basis
      $e_1$, $e_2$, $\dots,e_n$, where $n=1,2,\dots$, or $n=\infty$.
       We consider the full Fock space  of $H_n$ defined by
      $$F^2(H_n):=\CC1\oplus \bigoplus_{k\geq 1} H_n^{\otimes k},$$
      where  $H_n^{\otimes k}$ is the (Hilbert)
      tensor product of $k$ copies of $H_n$.
      Define the left  (resp.~right) creation
      operators  $S_i$ (resp.~$R_i$), $i=1,\ldots,n$, acting on $F^2(H_n)$  by
      setting
      $$
       S_i\varphi:=e_i\otimes\varphi, \quad  \varphi\in F^2(H_n),
      $$
       (resp.~$
       R_i\varphi:=\varphi\otimes e_i, \quad  \varphi\in F^2(H_n).
      $)
The noncommutative disc algebra $\cA_n$ (resp.~$\cR_n$) is the norm
closed algebra generated by the left (resp.~right) creation
operators and the identity. The   noncommutative analytic Toeplitz
algebra $F_n^\infty$ (resp.~$R_n^\infty$)
 is the  weakly
closed version of $\cA_n$ (resp.~$\cR_n$). These algebras were
introduced (\cite{Po-von} and \cite{Po-disc}) in connection with a
noncommutative von Neumann  type inequality (see \cite{vN} for the
classical case $n=1$). For basic results concerning completely
bounded maps
 and operator spaces we refer to \cite{Pa-book}, \cite{Pi}, and \cite{ER}.

 Let $\FF_n^+$ be the unital free semigroup on $n$ generators
$g_1,\ldots, g_n$ and the identity $g_0$.  The length of $\alpha\in
\FF_n^+$ is defined by $|\alpha|:=0$ if $\alpha=g_0$ and
$|\alpha|:=k$ if
 $\alpha=g_{i_1}\cdots g_{i_k}$, where $i_1,\ldots, i_k\in \{1,\ldots, n\}$.
If $(X_1,\ldots, X_n)\in B(\cH)^n$, where $B(\cH)$ is the algebra of
all bounded linear operators on the Hilbert space $\cH$,    we set
$X_\alpha:= X_{i_1}\cdots X_{i_k}$  and $X_{g_0}:=I_\cH$. We denote
$e_\alpha:= e_{i_1}\otimes\cdots \otimes e_{i_k}$  if
$\alpha=g_{i_1}\cdots g_{i_k}\in \FF_n^+$ and $e_{g_0}:=1$.  Note
that $\{e_\alpha\}_{\alpha\in \FF_n^+}$ is an orthonormal basis for
the full Fock space  $F^2(H_n)$.

Free pluriharmonic functions arise in the study of free holomorphic
functions on the noncommutative  open  unit ball $[B(\cH)^n]_1$. We
recall \cite{Po-holomorphic}  that a map $F:[B(\cH)^n]_{1}\to
B(\cH)\otimes_{min} B( \cE)$ is
   free
holomorphic function on  $[B(\cH)^n]_{1}$  with coefficients in
$B(\cE)$ if there exist $A_{(\alpha)}\in B(\cE)$, $\alpha\in
\FF_n^+$, such that  $\limsup_{k\to \infty} \left\|
\sum_{|\alpha|=k} A_{(\alpha)}^* A_{\alpha)}\right\|^{1/2k}\leq 1$
and $ F(X_1,\ldots, X_n)=\sum\limits_{k=0}^\infty
\sum\limits_{|\alpha|=k}X_\alpha \otimes A_{(\alpha)}, $ where the
series converges in the operator  norm topology  for any
$(X_1,\ldots, X_n)\in [B(\cH)^n]_{1}$. We say that
$h:[B(\cH)^n]_1\to B(\cH)\otimes_{min} B( \cE)$ is a self-adjoint
free pluriharmonic function on $[B(\cH)^n]_1$ if $h=\text{\rm Re}\,
f$ for some free holomorphic function $f$. An arbitrary free
pluriharmonic function is a linear combination of self-adjoint free
pluriharmonic functions.
 A free pluriharmonic functions on the
noncommutative ball
 $[B(\cH)^n]_1$  has the form
  \begin{equation*}  G(X_1,\ldots,
X_n)=\sum_{k=1}^\infty \sum_{|\alpha|=k}X_\alpha^*\otimes
B_{(\alpha)}
 +I\otimes A_{(0)}\otimes  + \sum_{k=1}^\infty
 \sum_{|\alpha|=k}X_\alpha\otimes A_{(\alpha)},
\end{equation*}
where the series are convergent in the  operator norm topology
  for any $ (X_1,\ldots, X_n)\in [B(\cH)^n]_1$.
   We
also recall that $G$ is called positive if $G(X_1,\ldots,X_n)\geq 0$
for any $(X_1,\ldots, X_n)\in [B(\cK)^n]_1$, where $\cK$ is an
infinite dimensional Hilbert space.

   Let
$A:=(A_1,\ldots, A_n)$ and $B:=(B_1,\ldots, B_n)$ be in
$[B(\cH)^n]_1^-$. As in  \cite{Po-hyperbolic}, we say that $A$ is
Harnack dominated by $B$, and denote $A\overset{H}{\prec}\, B$, if
there exists a constant $c>0$ such that
$$\text{\rm Re}\,p(A_1,\ldots, A_n)\leq c^2 \text{\rm
Re}\,p(B_1,\ldots, B_n) $$
 for any noncommutative polynomial with
matrix-valued coefficients $p\in \CC[X_1,\ldots, X_n]\otimes M_{m}$,
$m\in \NN$, such that $\text{\rm Re}\,p\geq 0$.
 When we
want to emphasize the constant $c$, we write
$A\overset{H}{{\underset{c}\prec}}\, B$.
In \cite{Po-hyperbolic}, we proved that  if $A,B\in [B(\cH)^n]_1^-$
and $c>0$, then
\begin{equation}
\label{equi}
 A\overset{H}{{\underset{c}\prec}}\, B\quad \text{ if
and only if } \quad P( rA, R)\leq c^2 P(rB, R)\quad \text{ for any }
\quad r\in [0,1),
\end{equation}
where $P(X, R)$ is the free pluriharmonic  Poisson kernel associated
with $X:=(X_1,\ldots, X_n) \in [B(\cH)^n]_1$ given by
$$
P(X,R):=\sum_{k=1}^\infty\sum_{|\alpha|=k}
 X_\alpha^*\otimes R_{\widetilde\alpha}
+I+\sum_{k=1}^\infty\sum_{|\alpha|=k} X_\alpha\otimes
R_{\widetilde\alpha}^*
$$
and the series are convergent in the operator norm topology.

In what follows we obtain a characterization of Harnack domination
in terms of multi-Toeplitz kernels on free semigroups.

A few more notations and definitions are necessary. If $\omega,
\gamma\in \FF_n^+$, we say that  $\omega
>_{l}\gamma$ if there is $\sigma\in
\FF_n^+\backslash\{g_0\}$ such that $\omega= \gamma \sigma$ and set
$\omega\backslash_l \gamma:=\sigma$.
 We denote by
$\tilde\alpha$  the reverse of $\alpha\in \FF_n^+$, i.e.,
  $\tilde \alpha= g_{i_k}\cdots g_{i_k}$ if
   $\alpha=g_{i_1}\cdots g_{i_k}\in\FF_n^+$.
An operator-valued  positive semidefinite kernel on the free
semigroup $\FF_n^+$ is a map
$ K:\FF_n^+\times\FF_n^+\to B(\cH) $ with the property that for each
$k\in\NN$, for each choice of vectors
 $h_1,\dots,h_k$ in $\cH$, and
$\sigma_1,\dots,\sigma_k$ in $\FF_n^+$ the inequality
$$
\sum\limits_{i,j=1}^k\langle K(\sigma_i,\sigma_j)h_j,h_i\rangle\ge 0
$$
holds. Such a kernel is called multi-Toeplitz if it has the
following properties:
$K(\alpha,\alpha)=I_\cH$  for  any $\alpha\in \FF_n^+$
and
$$
K(\sigma,\omega)=
\begin{cases}K(g_0,\omega \backslash_l \sigma)\ &\text{ if } \omega>_l\sigma  \\
 K(\sigma\backslash_l\omega,g_0) \ &\text{ if }\sigma>_l\omega  \\
0 &\text { otherwise. } \end{cases}
$$
If $X:=(X_1,\ldots, X_n)\in [B(\cH)^n]_1^-$, we define the
multi-Toeplitz kernel
   $K_{X}:\FF_n^+\times \FF_n^+\to B(\cH)$
   by
   $$
   K_{X}(\alpha, \beta):=
   \begin{cases}
   X_{\beta\backslash_l \alpha} &\text{ if } \beta>_l\alpha\\
   I  &\text{ if } \alpha=\beta\\
    X_{\alpha\backslash_l \beta}^*  &\text{ if } \alpha>_l\beta\\
    0\quad &\text{ otherwise}.
   \end{cases}
   $$

The first result is a characterization for  Harnack domination in
$[B(\cH)^n]_1^-$ in terms of multi-Toeplitz kernels on free
semigroups.

\begin{theorem}
\label{equivalent} Let $A:=(A_1,\ldots, A_n)$ and $B:=(B_1,\ldots,
B_n)$ be in $[B(\cH)^n]_1^-$ and let $c>0$. Then the following
statements are equivalent:
\begin{enumerate}
\item[(i)] $A\overset{H}{{\underset{c}\prec}}\, B$;
\item[(ii)]
$K_A\leq c^2 K_B$, where $K_X$ is the multi-Toeplitz kernel
associated with $X\in [B(\cH)^n]_1^-$;
\end{enumerate}
\end{theorem}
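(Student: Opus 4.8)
The plan is to connect the Harnack domination inequality (i), which is phrased in terms of positivity of free pluriharmonic polynomials evaluated at $A$ and $B$, to the kernel inequality (ii) via the characterization (\ref{equi}): $A\overset{H}{{\underset{c}\prec}}\, B$ iff $P(rA,R)\le c^2 P(rB,R)$ for all $r\in[0,1)$. The key observation is that the free pluriharmonic Poisson kernel $P(X,R)$ is, in a suitable sense, the ``matrix'' of the multi-Toeplitz kernel $K_X$: writing $P(X,R)=\sum_{k\ge1}\sum_{|\alpha|=k}X_\alpha^*\otimes R_{\widetilde\alpha}+I+\sum_{k\ge1}\sum_{|\alpha|=k}X_\alpha\otimes R_{\widetilde\alpha}^*$, one recognizes, upon computing matrix entries of $R_{\widetilde\alpha}$ against the orthonormal basis $\{e_\beta\}_{\beta\in\FF_n^+}$ of $F^2(H_n)$, that $\langle P(X,R)(h\otimes e_\beta),\, k\otimes e_\gamma\rangle = \langle K_X(\gamma,\beta)h,\,k\rangle$ for all $h,k\in\cH$ and $\beta,\gamma\in\FF_n^+$. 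Thus $P(X,R)$ acting on $\cH\otimes F^2(H_n)$ is precisely the operator whose block-entry in position $(\gamma,\beta)$ is $K_X(\gamma,\beta)$. The $r$-dilated version corresponds to the kernel $K_{rX}$, whose entries are $r^{|\beta\backslash_l\alpha|}X_{\beta\backslash_l\alpha}$ (resp.\ adjoints), so that $K_{rX}\to K_X$ entrywise as $r\to1$.

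The forward direction (i)$\Rightarrow$(ii) then runs as follows. Assuming $A\overset{H}{{\underset{c}\prec}}\, B$, (\ref{equi}) gives $P(rA,R)\le c^2 P(rB,R)$ on $\cH\otimes F^2(H_n)$ for every $r\in[0,1)$. Fix $k\in\NN$, vectors $h_1,\dots,h_k\in\cH$ and words $\sigma_1,\dots,\sigma_k\in\FF_n^+$, and test the operator inequality against the vector $\xi:=\sum_{i=1}^k h_i\otimes e_{\sigma_i}$ (enlarging to a large but finite block). This yields
\begin{equation*}
\sum_{i,j=1}^k \langle K_{rA}(\sigma_i,\sigma_j)h_j,\,h_i\rangle \le c^2 \sum_{i,j=1}^k \langle K_{rB}(\sigma_i,\sigma_j)h_j,\,h_i\rangle .
\end{equation*}
Letting $r\to1$ and using $K_{rX}(\sigma_i,\sigma_j)\to K_X(\sigma_i,\sigma_j)$ in norm (each entry is a monomial in $r$ times a fixed operator), we obtain $\sum_{i,j}\langle K_A(\sigma_i,\sigma_j)h_j,h_i\rangle\le c^2\sum_{i,j}\langle K_B(\sigma_i,\sigma_j)h_j,h_i\rangle$, which is exactly the assertion that $c^2 K_B-K_A$ is a positive semidefinite kernel on $\FF_n^+$, i.e.\ (ii).

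For the converse (ii)$\Rightarrow$(i), I would reverse the argument: if $c^2K_B-K_A\ge0$ as a kernel, then for each fixed $r<1$ and each finite set of words $F\subset\FF_n^+$, the finite compression of $c^2 P(rB,R)-P(rA,R)$ to $\cH\otimes\operatorname{span}\{e_\beta:\beta\in F\}$ is positive; since $F$ is arbitrary and $P(rX,R)$ is a norm-convergent series whose matrix is determined by these finite compressions, $c^2P(rB,R)-P(rA,R)\ge0$ on all of $\cH\otimes F^2(H_n)$. One small point to handle carefully is that $K_{rX}$ is obtained from $K_X$ not by a literal rescaling but by inserting $r^{|\cdot|}$ weights; I would verify that multi-Toeplitz positivity is preserved under this weighting (this is the standard fact that the Schur/Hadamard-type multiplier by the positive-definite kernel $(\alpha,\beta)\mapsto r^{\text{(gap length)}}$ preserves positivity, or more simply that $K_{rX}$ is itself a multi-Toeplitz kernel associated with the row contraction $rX\in[B(\cH)^n]_1^-$ and hence $c^2K_{rB}-K_{rA}\ge0$ follows from $c^2K_B-K_A\ge0$ by the same weighting argument). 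Then (\ref{equi}) gives (i).

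The main obstacle, and the step deserving the most care, is the bookkeeping identifying $P(X,R)$ with the kernel $K_X$ at the level of matrix entries — in particular getting the reverse-word conventions straight so that $\langle R_{\widetilde\alpha}e_\beta,e_\gamma\rangle$ matches the $\beta>_l\alpha$ / $\alpha>_l\beta$ case split in the definition of $K_X$ — together with the interchange of limits ($r\to1$ versus the infinite sums in $P$), which is why one passes through finite compressions and finitely supported test vectors where everything is a finite sum and no convergence issue arises.
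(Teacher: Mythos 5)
Your forward direction (i)$\Rightarrow$(ii) is essentially identical to the paper's: compute matrix entries of $P(X,rR)$ against the basis $\{e_\beta\}$, identify them with $K_{rX}$, test against finitely supported vectors, and let $r\to 1$. That part is fine.

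The converse (ii)$\Rightarrow$(i) has a genuine gap. You need to pass from $c^2K_B-K_A\geq 0$ to $c^2K_{rB}-K_{rA}\geq 0$ for every $r\in[0,1)$ in order to conclude $P(rA,R)\leq c^2P(rB,R)$ via finite compressions. You propose to do this by Schur--Hadamard multiplication of $c^2K_B-K_A$ with the scalar kernel $\Lambda(\alpha,\beta):=r^{|\beta\backslash_l\alpha|}$ (resp. $r^{|\alpha\backslash_l\beta|}$, $1$, $0$), asserting that $\Lambda$ is positive semidefinite. This is false for $n\geq 2$. Take $n=2$, $r$ close to $1$, and the three words $g_0$, $g_1$, $g_2$: the Gram matrix is
\begin{equation*}
\begin{pmatrix} 1 & r & r \\ r & 1 & 0 \\ r & 0 & 1 \end{pmatrix},
\end{equation*}
whose smallest eigenvalue is $1-r\sqrt{2}<0$ once $r>1/\sqrt{2}$. (For $n=1$ the multiplier $r^{|j-k|}$ really is positive definite, which may be why the argument felt safe; the branching of $\FF_n^+$ destroys it.) Your alternative phrasing, ``$K_{rX}$ is the kernel of the row contraction $rX$, hence $c^2K_{rB}-K_{rA}\geq 0$ by the same weighting argument,'' simply appeals to the same (failed) Schur step and is circular.

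The paper closes this gap by a different mechanism: a positive semidefinite multi-Toeplitz kernel such as $c^2K_B-K_A$ extends, by the structure theorem of \cite{Po-posi} (see also Theorem 5.2 of \cite{Po-pluriharmonic}), to a completely positive map $\mu$ on $C^*(S_1,\ldots,S_n)$ with $\mu(S_\alpha)=c^2B_\alpha-A_\alpha$. Applying $\mu\otimes\mathrm{id}$ to the positive operator $P(rS,R)$ yields $c^2P(rB,R)-P(rA,R)\geq 0$ for all $r\in[0,1)$ in one stroke, and then \eqref{equi} gives (i). That completely positive extension theorem is the indispensable ingredient that your sketch omits, and it cannot be replaced by the Schur-product observation you suggest.
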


\begin{proof} We
recall that  $e_\alpha:= e_{i_1}\otimes\cdots \otimes e_{i_k}$  if
$\alpha=g_{i_1}\cdots g_{i_k}\in \FF_n^+$ and $e_{g_0}:=1$, and
that $\{e_\alpha\}_{\alpha\in \FF_n^+}$ is an orthonormal basis for
the full Fock space  $F^2(H_n)$. We prove  now that $(i)\implies
(ii)$.
 First,  we show   that
  \begin{equation}\label{Ar}
  \left< P(X, rR)\left( \sum_{|\beta|\leq q}  h_\beta\otimes e_\beta \right),
   \sum_{|\gamma|\leq q}  h_\gamma\otimes e_\gamma\right>
   = \sum_{|\beta|, |\gamma|\leq q}\left< K_{X,r}(\gamma, \beta)
    h_\beta, h_\gamma\right>,
  \end{equation}
  where
  the multi-Toeplitz kernel  $K_{X,r}:\FF_n^+\times \FF_n^+\to
  B(\cH)$, $r\in (0,1)$,
  is defined  by
   $$
   K_{X,r}(\alpha, \beta):=
   \begin{cases}
  r^{|\beta\backslash_l \alpha|} X_{\beta\backslash_l \alpha}
   &\text{ if } \beta>_l\alpha\\
   I  &\text{ if } \alpha=\beta\\
    r^{|\alpha\backslash_l \beta|}(X_{\alpha\backslash_l \beta})^*
     &\text{ if } \alpha>_l\beta\\
    0\quad &\text{ otherwise}.
   \end{cases}
   $$
   Indeed,  if $\{h_\beta\}_{|\beta|\leq q}\subset \cH$, then
    \begin{equation*}
    \begin{split}
    \left< \left(\sum_{k=0}^\infty \sum_{|\alpha|=k} X_{\alpha}^*\otimes r^k
     R_{\tilde\alpha}\right)
     \right.& \left.\left(\sum_{|\beta|\leq q}  h_\beta\otimes e_\beta \right),
   \sum_{|\gamma|\leq q} h_\gamma\otimes e_\gamma\right>\\
   &=
     \sum_{k=0}^\infty \sum_{|\alpha|=k}\left<\sum_{|\beta|\leq q}
      X_{\alpha}^*h_\beta\otimes r^k
     R_{\tilde\alpha} e_\beta,
    \sum_{|\gamma|\leq q}  h_\gamma\otimes e_\gamma\right> \\
    &=\sum_{\alpha\in \FF_n^+}\sum_{|\beta|, |\gamma|\leq q}
    r^{|\alpha|} \left< e_{\beta  {\alpha}}, e_\gamma\right>
    \left<X_{\alpha}^* h_\beta, h_\gamma\right>\\
    &=
    \sum_{ \gamma\geq\beta; ~|\beta|, |\gamma|\leq q}
    r^{|\gamma\backslash_l \beta|}
     \left<X_{\gamma\backslash_l \beta}^* h_\beta, h_\gamma\right>\\
     &=
     \sum_{\gamma\geq\beta; ~|\beta|, |\gamma|\leq q}\left<K_{X,r}
      (\gamma, \beta)h_\beta, h_\gamma\right>.
    \end{split}
    \end{equation*}
   Hence, taking into account that $K_{X,r}
      (\gamma, \beta)=K_{X,r}^*
      ( \beta, \gamma)$,  we deduce relation \eqref{Ar}.
   Consequently, the condition $P( rA, R)\leq c^2 P( rB,R)$ for any $r\in
   [0,1)$ implies
$$[K_{A,r}(\alpha,\beta)]_{|\alpha|, |\beta|\leq q}
  \leq c^2[K_{B,r}(\alpha,\beta)]_{|\alpha|, |\beta|\leq q} $$
  for any $0<r<1$
  and $q=0,1,\ldots$.
  Taking $r\to 1$, we   obtain item (ii).

  Assume now that (ii) holds. Since $c^2K_B-K_A$ is a positive
  semidefinite multi-Toeplitz operator, due to \cite{Po-posi} (see also
  the proof of Theorem 5.2 from \cite{Po-pluriharmonic}), we find a
  completely positive linear map $\mu:C^*(S_1,\ldots, S_n)\to
  B(\cE)$ such that
  $$
  \mu(S_\alpha)=c^2K_B(g_0, \alpha)-K_A(g_0, \alpha)=c^2B_\alpha-A_\alpha
  $$
  for any $\alpha \in \FF_n^+$. Since $P(rS, R)\geq 0 $ for $r\in
  [0,1)$, we deduce that
  \begin{equation*} \begin{split}
(\mu\otimes \text{\rm
 id})[P(rS, R)]&=\sum_{k=1}^\infty\sum_{|\alpha=k}
 r^{|\alpha|}[c^2 B_\alpha^*-A_\alpha^*]\otimes R_{\widetilde\alpha}
+(c^2B_0-A_0)\otimes I+\sum_{k=1}^\infty\sum_{|\alpha=k}
 r^{|\alpha|}[c^2 B_\alpha-A_\alpha]\otimes R_{\widetilde\alpha}^*\\
&=c^2P(rB, R)-P(rA, R)\geq 0,
  \end{split}
  \end{equation*}
which, due to \eqref{equi}, implies   (i).    This completes the
proof.
\end{proof}

An $n$-tuple  $T:=(T_1,\dots, T_n)$ of bounded linear  operators
acting on a common Hilbert space $\cH$
   is called
contractive (or row contraction) if
$$
T_1T_1^*+\cdots +T_nT_n^*\leq I_\cH.
$$
For simplicity, throughout this paper, $[T_1,\ldots, T_n]$ denotes
either the $n$-tuple $(T_1,\ldots, T_n)$ or  the operator row matrix
$[T_1\ \cdots \ T_n]$.
The defect operators  associated with a row contraction
$T:=[T_1,\ldots, T_n]$ are
\begin{equation}
\label{DelDel}
 \Delta_T:=\left( I_\cH-\sum_{i=1}^n
T_iT_i^*\right)^{1/2}\in B(\cH) \quad \text{ and }\quad
\Delta_{T^*}:=([\delta_{ij}I_\cH-T_i^*T_j]_{n\times n})^{1/2}\in
B(\cH^{(n)}),
\end{equation}
while the defect spaces are $\cD_T:=\overline{\Delta_T\cH}$ and
$\cD_{T^*}:=\overline{\Delta_{T^*}\cH^{(n)}}$, where $\cH^{(n)}$
denotes the direct sum of $n$ copies of $\cH$.
We say that an $n$-tuple  $V:=[V_1,\dots, V_n]$ of isometries on a
Hilbert space $\cK_T\supseteq \cH$ is a  minimal isometric dilation
of $T$ if the following properties are satisfied:
\begin{enumerate}
\item[(i)]  $V_1V_1^*+\cdots +V_nV_n^*\le I_{\cK_T};$
\item[(ii)]  $V_i^*|_\cH=T_i^*, \ i=1,\dots,n;$
\item[(iii)] $\cK_T=\bigvee_{\alpha\in \FF^+_n} V_\alpha \cH.$
\end{enumerate}
The isometric dilation theorem for row contractions (see
\cite{Po-isometric}, \cite{Bu}, \cite{F})
 asserts that
  every  row contraction $T$ has a minimal isometric
  dilation $V$ which is uniquely
determined up to an isomorphism.

The main result of this section is the following.
\begin{theorem}
\label{equivalent2} Let $A:=[A_1,\ldots, A_n]$ and $B:=[B_1,\ldots,
B_n]$ be in $[B(\cH)^n]_1^-$ and let $V:=[V_1,\ldots, V_n]$ on
$\cK_A\supseteq \cH$ and $W:=[W_1,\ldots, W_n]$ on $\cK_B\supseteq
\cH$ be the minimal isometric dilations of $A$ and $B$,
respectively. If
 $c\geq1$, then the following statements are equivalent:
\begin{enumerate}
\item[(i)] $A\overset{H}{{\underset{c}\prec}}\, B$;
\item[(ii)]
there is an operator $L_{B,A}\in B(\cK_B, \cK_A)$ with
$\|L_{B,A}\|\leq c$ such that $L_{B,A}|_\cH=I_\cH$ and
\begin{equation*}
 L_{B,A} W_i=V_iL_{B,A},\qquad i=1,\ldots,n.
\end{equation*}
\end{enumerate}

\end{theorem}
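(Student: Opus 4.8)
The plan is to prove the equivalence by reducing to the one-variable Sz.-Nagy--Foia\c s picture, working with the Poisson kernels and the universal model on the Fock space. The basic observation is that the minimal isometric dilation $V=[V_1,\dots,V_n]$ of $A$ admits a concrete realization (the Sch\"affer-type / Fock space model of \cite{Po-isometric}) on the space $\cK_A=\cH\oplus(F^2(H_n)\otimes\cD_A)$, with $V_i^*|_\cH=A_i^*$, where $\cD_A=\overline{\Delta_A\cH}$; and likewise for $B$ on $\cK_B=\cH\oplus(F^2(H_n)\otimes\cD_B)$. On such models the noncommutative Poisson kernel relates the ``truncated'' isometric model data to the data on $\cH$: concretely, for $r\in[0,1)$ one has the identity expressing $\langle P(rA,R)(\cdot),(\cdot)\rangle$ as a positive form built from $[K_{A,r}(\alpha,\beta)]$, which is exactly formula \eqref{Ar} already established in the proof of Theorem \ref{equivalent}. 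So the heart of the matter is to match the inequality $P(rA,R)\le c^2 P(rB,R)$ (equivalently, by \eqref{equi}, $A\overset{H}{\underset{c}\prec}B$) with the existence of the intertwiner $L_{B,A}$.

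For the direction $(ii)\Rightarrow(i)$ I would argue as follows. Given $L_{B,A}\in B(\cK_B,\cK_A)$ with $\|L_{B,A}\|\le c$, $L_{B,A}|_\cH=I_\cH$, and $L_{B,A}W_i=V_iL_{B,A}$, it follows by induction on word length that $L_{B,A}W_\alpha=V_\alpha L_{B,A}$ for all $\alpha\in\FF_n^+$, hence also $W_\alpha^*L_{B,A}^*=L_{B,A}^*V_\alpha^*$. Compress to $\cH$: for $h,k\in\cH$ and $\alpha\in\FF_n^+$,
\begin{equation*}
\langle B_\alpha h,k\rangle=\langle W_\alpha h,k\rangle=\langle L_{B,A}^*W_\alpha h,\,L_{B,A}^*k\rangle\big/\ \text{(adjusting)}\ =\langle V_\alpha^* (\cdots)\rangle,
\end{equation*}
the point being that $P_\cH L_{B,A}=P_\cH$ and $P_\cH V_\alpha^*|_\cH=A_\alpha^*$, $P_\cH W_\alpha^*|_\cH=B_\alpha^*$, so that for any polynomial $p$ with $\mathrm{Re}\,p\ge0$ on $[B(\cK)^n]_1$ one gets $\mathrm{Re}\,p(A_1,\dots,A_n)=P_\cH\,\mathrm{Re}\,p(V_1,\dots,V_n)|_\cH$ and $\mathrm{Re}\,p(B_1,\dots,B_n)=P_\cH\,\mathrm{Re}\,p(W_1,\dots,W_n)|_\cH$, while the intertwining gives $c^2\,\mathrm{Re}\,p(W)-L_{B,A}^*\,\mathrm{Re}\,p(V)\,L_{B,A}\ge0$ because $\|L_{B,A}\|\le c$ and $\mathrm{Re}\,p(V)\ge0$ (as $V$ is a row contraction, indeed a row isometry, so lies in $[B(\cK_A)^n]_1^-$ and positivity of $\mathrm{Re}\,p$ passes to it by a limiting/dilation argument). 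Compressing this operator inequality to $\cH$ and using $L_{B,A}|_\cH=I_\cH$ yields $c^2\,\mathrm{Re}\,p(B)\ge\mathrm{Re}\,p(A)$, i.e. $A\overset{H}{\underset{c}\prec}B$.

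For $(i)\Rightarrow(ii)$, which I expect to be the main obstacle, I would use \eqref{equi} and the Fock-model computation. From $P(rA,R)\le c^2P(rB,R)$ and \eqref{Ar}, one has $[K_{A,r}(\alpha,\beta)]_{|\alpha|,|\beta|\le q}\le c^2[K_{B,r}(\alpha,\beta)]_{|\alpha|,|\beta|\le q}$ for all $q$ and all $r\in[0,1)$. Now on the isometric dilation models, introduce for $r\in[0,1)$ the bounded operators implementing the Poisson kernels: there is a contraction $\Phi_{A,r}:\cK_A\to\cH\otimes F^2(H_n)$ (or one works directly with the kernels $P(rA,R)^{1/2}$) such that the form above is $\|\Phi_{A,r}(\sum h_\beta\otimes e_\beta)\|^2$, and similarly $\Phi_{B,r}$. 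The domination of kernels says precisely that the map $\Phi_{B,r}(\sum h_\beta\otimes e_\beta)\mapsto\Phi_{A,r}(\sum h_\beta\otimes e_\beta)$ is well-defined and bounded by $c$ on the relevant dense subspace; since it intertwines the (model of the) shifts $W_i$ and $V_i$ — because the kernels $K_{X,r}$ are multi-Toeplitz, which is exactly the shift-covariance — it extends by density and continuity to an operator $L^{(r)}_{B,A}\in B(\cK_B,\cK_A)$ with $\|L^{(r)}_{B,A}\|\le c$, $L^{(r)}_{B,A}|_\cH=I_\cH$, and $L^{(r)}_{B,A}W_i=V_iL^{(r)}_{B,A}$. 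Finally, let $r\to1$: by a weak-$*$ (or WOT) compactness argument on the ball of radius $c$ in $B(\cK_B,\cK_A)$, extract a limit point $L_{B,A}$; the three properties ($\|\cdot\|\le c$, restricts to $I_\cH$ on $\cH$, intertwines $W_i$ with $V_i$) are all preserved in the limit, since they are WOT-closed conditions. This produces the required $L_{B,A}$ and completes the proof. The delicate points to handle carefully are: (a) justifying that the kernel domination really yields a well-defined bounded map on the model spaces (this is where minimality of the dilations, i.e. $\cK_A=\bigvee_\alpha V_\alpha\cH$, is used to guarantee the domain is dense), and (b) the passage $r\to1$, which needs the uniform bound $\|L^{(r)}_{B,A}\|\le c$ independent of $r$.
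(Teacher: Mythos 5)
There is a genuine gap in your argument for $(ii)\Rightarrow(i)$. You claim that the intertwining, together with $\|L_{B,A}\|\le c$ and $\mathrm{Re}\,p(V)\ge0$, gives the operator inequality $c^2\,\mathrm{Re}\,p(W)-L_{B,A}^*\,\mathrm{Re}\,p(V)\,L_{B,A}\ge0$. This does not follow. Using $L_{B,A}\,p(W)=p(V)\,L_{B,A}$ (and its adjoint $p(W)^*L_{B,A}^*=L_{B,A}^*p(V)^*$), one computes
\[
c^2\,\mathrm{Re}\,p(W)-L_{B,A}^*\,\mathrm{Re}\,p(V)\,L_{B,A}
=\tfrac12\Big[(c^2I-L_{B,A}^*L_{B,A})\,p(W)+p(W)^*(c^2I-L_{B,A}^*L_{B,A})\Big],
\]
and there is no reason for $\tfrac12(MN+N^*M)$ to be positive when $M:=c^2I-L_{B,A}^*L_{B,A}\ge0$ and $\mathrm{Re}\,N\ge0$; indeed, with $M=\left[\begin{smallmatrix}1&0\\0&0\end{smallmatrix}\right]$ and $N=\left[\begin{smallmatrix}1&1\\-1&1\end{smallmatrix}\right]$ one gets $\tfrac12(MN+N^*M)=\left[\begin{smallmatrix}1&1/2\\1/2&0\end{smallmatrix}\right]$, which is not positive semidefinite. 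The inequality you need is \emph{only} the compression to $\cH$, and the correct route to it is the one the paper uses: the intertwining gives $L_{B,A}\big(\sum_{|\alpha|\le m}W_\alpha h_\alpha\big)=\sum_{|\alpha|\le m}V_\alpha h_\alpha$, hence $\big\|\sum V_\alpha h_\alpha\big\|^2\le c^2\big\|\sum W_\alpha h_\alpha\big\|^2$, and since $\big\|\sum V_\alpha h_\alpha\big\|^2=\big\langle[K_A(\beta,\alpha)]\mathbf{h},\mathbf{h}\big\rangle$ (and likewise for $W$ and $K_B$), this is exactly $K_A\le c^2K_B$, which Theorem~\ref{equivalent} converts into $A\overset{H}{\underset{c}\prec}B$.

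For $(i)\Rightarrow(ii)$, your basic idea — kernel domination forces a bounded map on a dense subspace, which then intertwines because the kernels are multi-Toeplitz — is correct and is what the paper does. However, your realization is muddled and introduces unnecessary complications. The noncommutative Poisson kernel $K_{A,r}$ is a map $\cH\to\overline{\Delta_{A,r}\cH}\otimes F^2(H_n)$, not a map on $\cK_A$; the minimal isometric dilation space $\cK_A$ is (in the model of \cite{Po-isometric}) $\cH\oplus(F^2(H_n)\otimes\cD_{A^*})$, a different space, so the operators $\Phi_{A,r}:\cK_A\to\cH\otimes F^2(H_n)$ you invoke are not well defined as stated. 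More importantly, the whole $r\to1$ compactness argument is superfluous: once you have $K_A\le c^2K_B$ from Theorem~\ref{equivalent} (which already absorbed the $r\to1$ limit), you can work directly at $r=1$ on the minimal isometric dilation spaces. Because $V_1,\dots,V_n$ are isometries with orthogonal ranges and $V_\alpha^*|_\cH=A_\alpha^*$, the identity $\big\|\sum_{|\alpha|\le m}V_\alpha h_\alpha\big\|^2=\big\langle[K_A(\beta,\alpha)]_{|\alpha|,|\beta|\le m}\mathbf{h},\mathbf{h}\big\rangle$ holds exactly, and similarly for $W$ and $K_B$. So $K_A\le c^2K_B$ directly gives $\big\|\sum V_\alpha h_\alpha\big\|\le c\big\|\sum W_\alpha h_\alpha\big\|$, and one defines $L_{B,A}$ on the dense subspace $\mathrm{span}\{W_\alpha h:\alpha\in\FF_n^+,\,h\in\cH\}$ of $\cK_B$ by $\sum W_\alpha h_\alpha\mapsto\sum V_\alpha h_\alpha$. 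This is well defined, bounded by $c$, restricts to the identity on $\cH$, and intertwines $W_i$ with $V_i$ by construction. Minimality of the dilation gives density of the domain, as you correctly anticipated, but no limiting argument is needed.
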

\begin{proof}
 Assume that (i) holds. According to Theorem \ref{equivalent}, we have
$K_A\leq c^2 K_B$, where $K_X$ is the multi-Toeplitz kernel
associated with $X\in [B(\cH)^n]_1^-$.

 Since $V:=[V_1,\ldots, V_n]$ is the minimal isometric
dilation of $A:=[A_1,\ldots, A_n]$, we have
$\cK_A=\bigvee_{\alpha\in \FF_n^+} V_\alpha \cH$ and
$V_\alpha^*|_{\cH}=A_\alpha^*$ for any $\alpha\in \FF_n^+$. Similar
properties hold for $W$ and $B$. Hence,  and taking into account
that $V_1,\ldots, V_n$ and $W_1,\ldots, W_n$ are isometries with
orthogonal ranges, respectively,  we have
\begin{equation*}
\begin{split}
\left\|\sum_{|\alpha|\leq m} V_\alpha h_\alpha\right\|^2 &=
\sum_{\alpha>_l \beta, |\alpha|, |\beta|\leq
m}\left<V_{\alpha\backslash _l \beta} h_\alpha, h_\beta\right>
+\sum_{|\alpha|\leq m} \left<h_\alpha, h_\alpha\right> +
\sum_{\beta>_l \alpha, |\alpha|, |\beta|\leq
m}\left<V_{\beta\backslash _l \alpha}^* h_\alpha, h_\beta\right>
\\
&= \sum_{\alpha>_l \beta, |\alpha|, |\beta|\leq
m}\left<A_{\alpha\backslash _l \beta} h_\alpha, h_\beta\right>
+\sum_{|\alpha|\leq m} \left<h_\alpha, h_\alpha\right> +
\sum_{\beta>_l \alpha, |\alpha|, |\beta|\leq
m}\left<A_{\beta\backslash _l \alpha}^* h_\alpha, h_\beta\right>
\\
&=\sum_{|\alpha|\leq m, |\beta|\leq m} \left<K_A(\beta, \alpha)
h_\alpha, h_\beta\right> =
\left<\left[K_A(\beta,\alpha)\right]_{|\alpha|, |\beta|\leq m}{\bf
h}_m, {\bf h}_m\right>
\end{split}
\end{equation*}
for any $m\in \NN$  and ${\bf h}_m\in \oplus_{|\alpha|\leq m}
\cH_{\alpha}$, where each $\cH_{\alpha}$ is a copy of $\cH$.
Similarly, one can show that
$$
\left\|\sum_{|\alpha|\leq m} W_\alpha h_\alpha\right\|^2  =
\left<\left[K_B(\beta, \alpha)\right]_{|\alpha|, |\beta|\leq m}{\bf
h}_m, {\bf h}_m\right>.
$$
Since $K_A\leq c^2 K_B$, we deduce that
$$
\left\|\sum_{|\alpha|\leq m} V_\alpha h_\alpha\right\|\leq c
\left\|\sum_{|\alpha|\leq m} W_\alpha h_\alpha\right\|.
$$
Consequently, we can define an  operator $L_{B,A}:\cK_B\to \cK_A$ by
setting
\begin{equation}
\label{LBA} L_{B,A}\left(\sum_{|\alpha|\leq m} W_\alpha
h_\alpha\right):=\sum_{|\alpha|\leq m} V_\alpha h_\alpha
\end{equation}
for any $m\in \NN$ and $h_\alpha\in \cH$, $\alpha\in \FF_n^+$. Due
to the considerations above, $L_{B,A}$ is a bounded  operator with
$\|L_{B,A}\|\leq c$. Note that since $L_{B,A}|_\cH=I_\cH$, we have
$\|L_{B,A}\|\geq 1$.  One can easily see that  $L_{B,A}
W_i=V_iL_{B,A}$ for $ i=1,\ldots,n$. Therefore, (ii) holds.

Conversely, assume that there is an operator $L_{B,A}\in B(\cK_B,
\cK_A)$ with norm $\|L_{B,A}\|\leq c$  such that
$L_{B,A}|_\cH=I_\cH$ and $ L_{B,A} W_i=V_iL_{B,A}$, $ i=1,\ldots,n$.
Hence, we deduce that $ L_{B,A}\left(\sum_{|\alpha|\leq m} W_\alpha
h_\alpha\right)=\sum_{|\alpha|\leq m} V_\alpha h_\alpha$ for any
$m\in \NN$ and $h_\alpha\in \cH$, $\alpha\in \FF_n^+$. Since
$\|L_{B,A}\|\leq c$ , we have
$$
\left\|\sum_{|\alpha|\leq m} V_\alpha h_\alpha\right\|^2 \leq
c^2\left\|\sum_{|\alpha|\leq m} W_\alpha h_\alpha\right\|^2,
$$
which is equivalent to the inequality
$$
\left<\left[K_A(\beta, \alpha)\right]_{|\alpha|, |\beta|\leq m}{\bf
h}_m, {\bf h}_m\right> \leq c^2 \left<\left[K_B(\beta,
\alpha)\right]_{|\alpha|, |\beta|\leq m}{\bf h}_m, {\bf h}_m\right>
$$
for any $m\in \NN$  and ${\bf h}_m\in \oplus_{|\alpha| \leq m}
\cH_{\alpha }$. Hence, we obtain  $K_A\leq c^2 K_B$. Using again
Theorem \ref{equivalent}, we deduce item (i). The proof is complete.
\end{proof}

Using Theorem \ref{equivalent2} and relation \eqref{equi}, we deduce
the following consequences.
\begin{corollary}\label{LBA-inf} If  $A, B\in [B(\cH)^n]_1^-$ and
 $A\overset{H}{{\prec}}\, B$, then
 \begin{equation*}
 \begin{split}
 \|L_{B,A}\|&=\inf\{c\geq 1:\ A\overset{H}{{\underset{c}\prec}}\,
 B\}\\
 &=\inf\{c\geq 1:\  P( rA, R)\leq c^2 P(rB, R)\quad \text{ for any }
\quad r\in [0,1)\}.
 \end{split}
 \end{equation*}
\end{corollary}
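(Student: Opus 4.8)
The plan is to deduce the statement directly from Theorem \ref{equivalent2} and relation \eqref{equi}, the only genuinely necessary observation being that the intertwining lifting $L_{B,A}$ is uniquely determined and, in particular, does not depend on the constant $c$. First I would note that since $A\overset{H}{\prec}\, B$ there is some $c_0>0$ with $A\overset{H}{{\underset{c_0}\prec}}\, B$, and because $c^2\,\mathrm{Re}\,p(B)\le\mathrm{Re}\,p(B)$ whenever $0<c\le 1$ and $\mathrm{Re}\,p(B)\ge 0$, one may assume $c_0\ge 1$. Then Theorem \ref{equivalent2} applies and yields a bounded operator $L_{B,A}\in B(\cK_B,\cK_A)$ with $\|L_{B,A}\|\le c_0$, $L_{B,A}|_\cH=I_\cH$, and $L_{B,A}W_i=V_iL_{B,A}$ for $i=1,\ldots,n$; concretely, this is the operator defined by \eqref{LBA}.

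Next I would record uniqueness: if $L\in B(\cK_B,\cK_A)$ satisfies $L|_\cH=I_\cH$ and $LW_i=V_iL$ for all $i$, then $LW_\alpha h=V_\alpha Lh=V_\alpha h$ for every $\alpha\in\FF_n^+$ and $h\in\cH$, and since $\cK_B=\bigvee_{\alpha\in\FF_n^+}W_\alpha\cH$ by minimality of $W$, the operator $L$ must coincide with the one in \eqref{LBA}. Hence the symbol $L_{B,A}$ is unambiguous. Combining this with Theorem \ref{equivalent2}, for $c\ge 1$ we have $A\overset{H}{{\underset{c}\prec}}\, B$ if and only if $\|L_{B,A}\|\le c$, so
\begin{equation*}
\{c\ge 1:\ A\overset{H}{{\underset{c}\prec}}\, B\}=\{c\ge 1:\ \|L_{B,A}\|\le c\}=\bigl[\max\{1,\|L_{B,A}\|\},\infty\bigr).
\end{equation*}
Since $L_{B,A}|_\cH=I_\cH$ forces $\|L_{B,A}\|\ge 1$, the infimum of this set equals $\|L_{B,A}\|$, which gives the first equality; the second equality is immediate from \eqref{equi}, upon intersecting with $[1,\infty)$ the two equal sets of admissible constants and taking infima.

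I do not expect any real obstacle here: the content is a bookkeeping consequence of the theorem already proved. The one point meriting care is the uniqueness of the intertwining lifting, needed so that ``$L_{B,A}$'' names a single operator rather than a $c$-dependent family, and this is exactly the computation carried out in the converse direction of the proof of Theorem \ref{equivalent2}.
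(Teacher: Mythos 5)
Your proof is correct and follows exactly the route the paper intends: the corollary is stated as an immediate consequence of Theorem \ref{equivalent2} together with relation \eqref{equi}, and your argument is the natural unpacking of that claim. Your observation about the uniqueness of the intertwining lifting is well placed — it is implicit in the paper's construction \eqref{LBA} and is exactly the point needed so that the infimum over admissible constants picks out $\|L_{B,A}\|$.
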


\begin{corollary}
\label{Lr}
  Let   $A:=[A_1,\ldots, A_n]$ and $B:=[B_1,\ldots,
B_n]$  be    in $[B(\cH)^n]_1^-$.  Then
 $A\overset{H}{{\prec}}\, B$ if and only if \
    $\sup_{r\in [0, 1)} \|L_{rA,rB}\|<\infty$.
 Moreover,
in this case,
$$\|L_{A,B}\|=\sup_{r\in [0, 1)} \|L_{rA,rB}\|
$$
and $r\mapsto  \|L_{rA,rB}\|$ is increasing on $[0,1)$.
\end{corollary}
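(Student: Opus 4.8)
The plan is to obtain everything from Corollary~\ref{LBA-inf} together with the description \eqref{equi} of Harnack domination via the Poisson kernel. (Consistently with Theorem~\ref{equivalent2} and Corollary~\ref{LBA-inf}, I write $L_{rB,rA}$ for the intertwiner attached to the domination $rA\overset{H}{\prec}rB$.) Fix $r\in[0,1)$. The tuples $rA$ and $rB$ lie in the open ball $[B(\cH)^n]_1$, hence are Harnack equivalent (the open ball is a single Harnack part; see \cite{Po-hyperbolic}); in particular $rA\overset{H}{\prec}rB$, so $L_{rB,rA}$ is a well-defined bounded operator. Applying Corollary~\ref{LBA-inf} to the pair $(rA,rB)$ and then substituting $t=sr$ (so that $t$ ranges over $[0,r)$), I get
$$
\|L_{rB,rA}\|=\inf\{c\ge1:\ P(srA,R)\le c^{2}P(srB,R)\ \text{for all}\ s\in[0,1)\}=\gamma_{r},
$$
where $\gamma_{r}:=\inf\{c\ge1:\ P(tA,R)\le c^{2}P(tB,R)\ \text{for all}\ t\in[0,r)\}$ (with the convention $\inf\emptyset=+\infty$).

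Next I would record two elementary properties of the sets $\cC_{r}:=\{c\ge1:\ P(tA,R)\le c^{2}P(tB,R)\ \text{for all}\ t\in[0,r)\}$, so that $\gamma_{r}=\inf\cC_{r}$. First, since $P(tB,R)\ge0$, each $\cC_{r}$ is upward closed in $[1,\infty)$, i.e.\ an interval with left endpoint $\gamma_{r}$; and since $[0,r_{1})\subseteq[0,r_{2})$ for $r_{1}\le r_{2}$, the family $(\cC_{r})_{0\le r<1}$ is nested decreasing, so $r\mapsto\gamma_{r}=\|L_{rB,rA}\|$ is non-decreasing on $[0,1)$. Second, because $[0,1)=\bigcup_{r<1}[0,r)$, one has $\bigcap_{r<1}\cC_{r}=\{c\ge1:\ P(tA,R)\le c^{2}P(tB,R)\ \text{for all}\ t\in[0,1)\}$, and by \eqref{equi} this set equals $\{c\ge1:\ A\overset{H}{{\underset{c}\prec}}\,B\}$. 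Using the nested, upward-closed structure of the $\cC_{r}$ one checks that $\inf\bigcap_{r<1}\cC_{r}=\sup_{0\le r<1}\gamma_{r}$ in $[1,+\infty]$.

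Assembling these facts yields the corollary. Indeed, $A\overset{H}{\prec}B$ is equivalent to $\{c\ge1:\ A\overset{H}{{\underset{c}\prec}}\,B\}\neq\emptyset$, hence, by the second property, to $\sup_{r<1}\gamma_{r}<\infty$, i.e.\ to $\sup_{r\in[0,1)}\|L_{rB,rA}\|<\infty$; and when this holds, Corollary~\ref{LBA-inf} gives
$$
\|L_{B,A}\|=\inf\{c\ge1:\ A\overset{H}{{\underset{c}\prec}}\,B\}=\inf\bigcap_{r<1}\cC_{r}=\sup_{r\in[0,1)}\|L_{rB,rA}\|,
$$
while the first property records that $r\mapsto\|L_{rB,rA}\|$ is increasing. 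The only step that is not purely formal is the identity $\inf\bigcap_{r<1}\cC_{r}=\sup_{r<1}\gamma_{r}$ — an interchange of $\sup$ and $\inf$ — which is precisely why I would single out the upward-closedness of the $\cC_{r}$ (a consequence of $P(tB,R)\ge0$) together with their nestedness; a minor auxiliary point is the (standard, from \cite{Po-hyperbolic}) fact that $rA$ and $rB$ automatically lie in a single Harnack part, so that Corollary~\ref{LBA-inf} applies to them.
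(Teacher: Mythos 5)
Your proof is correct and rests on the same underlying facts as the paper's: the equivalence \eqref{equi} between Harnack domination and the family of Poisson-kernel inequalities $P(rA,R)\le c^{2}P(rB,R)$, together with the norm characterization of the intertwiner from Theorem~\ref{equivalent2} (which you invoke in the packaged form of Corollary~\ref{LBA-inf}). The difference is organizational: the paper goes back and forth explicitly between Theorem~\ref{equivalent2} and \eqref{equi}, once for the forward implication (taking $c=\|L_{B,A}\|$, scaling to $tA,tB$, and reapplying the theorem) and once for the converse, and then obtains monotonicity of $r\mapsto\|L_{rB,rA}\|$ as an afterthought from the resulting formula; you instead identify $\|L_{rB,rA}\|$ with $\gamma_{r}=\inf\cC_{r}$ for $\cC_{r}:=\{c\ge1:\ P(tA,R)\le c^{2}P(tB,R)\ \forall t\in[0,r)\}$, observe that these sets are nested and upward closed (hence half-lines $[\gamma_{r},\infty)$), and read off everything — the equivalence, the sup formula, and the monotonicity — from the single identity $\inf\bigcap_{r<1}\cC_{r}=\sup_{r<1}\gamma_{r}$. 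This is a cleaner way to assemble the same ingredients, and it automatically makes the monotonicity a by-product of nestedness rather than a separate observation. You also correctly flagged (and repaired) the index swap in the statement: with the convention of Theorem~\ref{equivalent2}, the intertwiner attached to $rA\overset{H}{\prec}rB$ is $L_{rB,rA}$, not $L_{rA,rB}$; the paper's own proof exhibits the same minor inconsistency.

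One small point worth making explicit (you state it but do not justify it): each $\cC_{r}$ is nonempty because $rA,rB\in[B(\cH)^n]_{1}$ forces $rA\overset{H}{\sim}rB$, and it is closed in $c$ because the defining inequalities are closed conditions; hence $\cC_{r}=[\gamma_{r},\infty)$ exactly, which is what the sup--inf identity really uses.
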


\begin{proof} Assume that $A\overset{H}{{\prec}}\, B$.  Then, due to
Theorem \ref{equivalent2}, $A\overset{H}{{\underset{c}\prec}}\, B$
if and only if there is an operator $L_{B,A}\in B(\cK_B, \cK_A)$
with $\|L_{B,A}\|\leq c$ such that $L_{B,A}|_\cH=I_\cH$ and
$
 L_{B,A} W_i=V_iL_{B,A}$, $ i=1,\ldots,n.
$
Consequently, taking $c=\|L_{B,A}\|$, we deduce that
$A\overset{H}{{\underset{\|L_{B,A}\|}\prec}}\, B$, which due to
\eqref{equi}, is equivalent to
$$
P(rA,R)\leq \|L_{B,A}\|^2 P(rB,R)
$$
for any $r\in [0,1)$. Using again relation \eqref{equi}, we have
$tA\overset{H}{{\underset{\|L_{B,A}\|}\prec}}\, tB$ for any $t\in
[0,1)$. Applying Theorem \ref{equivalent2} to the operators $tA$ and
$tB$, we deduce that $\|L_{tA,tB}\|\leq \|L_{B,A}\|$.

Conversely, suppose that $c:=\sup_{r\in [0, 1)}
\|L_{rA,rB}\|<\infty$. Since $\|L_{rA,rB}\|\leq c$, Theorem
\ref{equivalent2} implies $rA\overset{H}{{\underset{c}\prec}}\, rB$
for any $r\in [0,1)$. Due to \eqref{equi}, we have $P(rtA,R)\leq c^2
P(rtB,R)$ for any $t,r\in [0,1)$. Hence,
$A\overset{H}{{\underset{c}\prec}}\, B$ which, due to Theorem
\ref{equivalent2}, implies $\|L_{B,A}\|\leq c$. Therefore,
$\|L_{A,B}\|=\sup_{r\in [0, 1)} \|L_{rA,rB}\|$. The fact that
$r\mapsto  \|L_{rA,rB}\|$ is  an increasing function on $[0,1)$
follows  from the latter relation. This completes the proof.
\end{proof}

The intertwining operator $L_{B,A}$ introduced in this section will
play a very important role in this paper.

\bigskip

\section{The reconstruction operator and Harnack domination}

 In this section,  we obtain some  results  concerning
 the reconstruction operator and Harnack equivalence  on $[B(\cH)^n]_1$.
These results are needed in the next sections.

We need to recall from \cite{Po-poisson} a few basic facts about
noncommutative Poisson transforms on Cuntz-Toeplitz algebras. Let
$T:=[T_1,\dots, T_n$ be   row contraction, i.e.,
$$
T_1T_1^*+\cdots +T_nT_n^*\leq I_\cH.
$$
 We define the defect operator
$\Delta_{T,r}:=(I_\cH-r^2T_1T_1^*-\cdots -r^2 T_nT_n^*)^{1/2}$ for
each $0< r\leq 1$.
The noncommutative Poisson  kernel associated with $T$ is the family
of operators
$$
K_{T,r} :\cH\to  \overline{\Delta_{T,r}\cH} \otimes  F^2(H_n), \quad
0< r\leq 1,
$$
defined by
\begin{equation*}
K_{T,r}h:= \sum_{k=0}^\infty \sum_{|\alpha|=k} r^{|\alpha|}
\Delta_{T,r} T_\alpha^*h\otimes  e_\alpha,\quad h\in \cH.
\end{equation*}
We recall that $\{e_\alpha\}_{\alpha\in \FF_n^+}$ is an orthonormal
basis for $F^2(H_n)$. When $r=1$, we denote $\Delta_T:=\Delta_{T,1}$
and $K_T:=K_{T,1}$. The operators $K_{T,r}$ are isometries if
$0<r<1$, and
$$
K_T^*K_T=I_\cH- \text{\rm SOT-}\lim_{k\to\infty} \sum_{|\alpha|=k}
T_\alpha T_\alpha^*.
$$
Thus $K_T$ is an isometry if and only if $T$ is a {\it pure} row
 contraction,
i.e., $ \text{\rm SOT-}\lim\limits_{k\to\infty} \sum_{|\alpha|=k}
T_\alpha T_\alpha^*=0.$ We denote by $C^*(S_1,\ldots, S_n)$ the
Cuntz-Toeplitz $C^*$-algebra generated by the left creation
operators (see \cite{Cu}). The noncommutative Poisson transform at
 $T:=[T_1,\ldots, T_n]\in [B(\cH)^n]_1^-$ is the unital completely contractive  linear map
 $P_T:C^*(S_1,\ldots, S_n)\to B(\cH)$ defined by
 \begin{equation*}
 P_T[f]:=\lim_{r\to 1} K_{T,r}^* (I_\cH \otimes f)K_{T,r}, \qquad f\in C^*(S_1,\ldots,
 S_n),
\end{equation*}
 where the limit exists in the norm topology of $B(\cH)$. Moreover, we have
 $$
 P_T[S_\alpha S_\beta^*]=T_\alpha T_\beta^*, \qquad \alpha,\beta\in \FF_n^+.
 $$
 When $T:=[T_1,\ldots, T_n]$  is a pure row contraction,
   we have $$P_T[f]=K_T^*(I_{\cD_{T}}\otimes f)K_T,
   $$
   where $\cD_T=\overline{\Delta_T \cH}$.
We refer to \cite{Po-poisson}, \cite{Po-curvature},  and
\cite{Po-unitary} for more on noncommutative Poisson transforms on
$C^*$-algebras generated by isometries.

Consider now  the particular case when $n=1$. In this case,   the
free pluriharmonic Poisson kernel $P(Y,R)$ coincides with
$$
Q(Y, U):=\sum_{k=1} {Y^*}^k\otimes U^k + I+ \sum_{k=1}^\infty
 Y^k\otimes {U^*}^k,\qquad \|Y\|<1,
$$
where the convergence of the series is in the operator norm topology
and $U$ is the  unilateral shift acting on the Hardy space
$H^2(\TT)$. For each contraction $T\in B(\cH)$, consider the
operator-valued Poisson kernel defined by
$$
K(z,T):= \sum_{k=1}^\infty z^k {T^*}^k+I+\sum_{k=1}^\infty \bar z^k
T^k,\qquad z\in \DD.
$$
Using Theorem \ref{equivalent} from \cite{Po-hyperbolic}, we can
deduce the following.
\begin{proposition}\label{n=1}
Let $ T$ and $ T'$ be in $[B(\cH)]_1^-$ and let $c\geq 1$. Then the
following statements are equivalent:
\begin{enumerate}
\item[(i)] $T\overset{H}{{\underset{c}\prec}}\, T'$;
\item[(ii)] $Q(rT,U)\leq c^2 Q(rT', U)$ for any $r\in [0,1)$;
\item[(iii)] $K(z,T)\leq c^2K(z,T')$ for any $z\in \DD$.
\end{enumerate}
\end{proposition}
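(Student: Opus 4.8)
The plan is to reduce everything to the multivariable results already proved, by exhibiting the $n=1$ objects as special cases of the $n$-variable ones. The key observation is that for $n=1$ the right creation operator $R_1$ on $F^2(H_1)$ is unitarily equivalent to the unilateral shift $U$ on $H^2(\TT)$, since $F^2(H_1)=\CC1\oplus H_1\oplus H_1^{\otimes 2}\oplus\cdots$ is the $\ell^2$-space on $\{e_\alpha\}_{\alpha\in\FF_1^+}$, i.e.\ the Hardy space. Under this identification, $R_{\widetilde\alpha}=R_1^{|\alpha|}$ becomes $U^{|\alpha|}$, so the free pluriharmonic Poisson kernel $P(Y,R)$ for a single operator $Y$ with $\|Y\|<1$ becomes exactly
$$
Q(Y,U)=\sum_{k=1}^\infty {Y^*}^k\otimes U^k+I+\sum_{k=1}^\infty Y^k\otimes {U^*}^k.
$$
So the equivalence $(i)\Leftrightarrow(ii)$ is literally relation \eqref{equi} from \cite{Po-hyperbolic} specialized to $n=1$, once one records the unitary equivalence $R_1\cong U$; no new work is required there.

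The content of the proposition is therefore the equivalence $(ii)\Leftrightarrow(iii)$, i.e.\ passing between the condition $Q(rT,U)\le c^2Q(rT',U)$ for all $r\in[0,1)$ and the condition $K(z,T)\le c^2K(z,T')$ for all $z\in\DD$. First I would note that $Q(rT,U)$ is an operator on $\cH\otimes H^2(\TT)$, and I would evaluate it (or its associated sesquilinear form) on vectors of the form $h\otimes k_z$, where $k_z(w)=\frac{1}{1-\bar z w}$ is the reproducing (Szeg\H o) kernel of $H^2(\TT)$, $z\in\DD$. A direct computation using $U^*k_z=\bar z k_z$ (so $U^{*k}k_z=\bar z^k k_z$ and $\langle U^m k_z,U^\ell k_z\rangle$ is a convergent geometric-type sum, and $\langle U^j h', h''\otimes k_z\rangle$-type terms produce powers of $z$) shows that
$$
\frac{\langle Q(rT,U)(h\otimes k_z),(h'\otimes k_z)\rangle}{\langle k_z,k_z\rangle}=\langle K(rz,T)h,h'\rangle,
$$
up to the standard constant; hence $Q(rT,U)\le c^2Q(rT',U)$ for every $r\in[0,1)$ forces $K(rz,T)\le c^2K(rz,T')$ for all $r\in[0,1)$ and $z\in\DD$, i.e.\ (iii) after letting $rz$ range over $\DD$ (or taking $r\to1$). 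For the reverse direction, the vectors $\{k_z\}_{z\in\DD}$ span a dense subspace of $H^2(\TT)$, and a general vector in $\cH\otimes H^2(\TT)$ is approximated by finite linear combinations $\sum_i h_i\otimes k_{z_i}$; writing out $\langle Q(rT,U)(\sum h_i\otimes k_{z_i}),(\sum h_j\otimes k_{z_j})\rangle$ one gets a finite sum of terms of the shape $\langle K(r\overline{z_j}z_i,\cdot)h_i,h_j\rangle$-flavored expressions, and positivity of the difference kernel $(z,w)\mapsto c^2K(w,T')-K(w,T)$ evaluated along these points yields $Q(rT,U)\le c^2Q(rT',U)$. So (iii) implies (ii).

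The main obstacle I anticipate is purely bookkeeping: getting the constants and the interplay between the $r$-dilation and the $z$-variable exactly right, since both $Q(rT,U)$ and $K(z,T)$ already carry a built-in scaling (the radial parameter $r$ on one side, the disc point $z$ on the other), and one must be careful that the families $\{rz:r\in[0,1),z\in\DD\}$ and $\{z:z\in\DD\}$ coincide so that the two "for all" quantifiers match up. A clean way to avoid sign/convergence pitfalls is to work throughout with the positive semidefinite Toeplitz kernels rather than the operators themselves and quote the Naimark/Toeplitz representation (as in the proof of Theorem \ref{equivalent}), but since the proposition is stated as a routine specialization, I expect the reproducing-kernel computation above to suffice, and indeed the authors explicitly say it "can be deduced" from Theorem \ref{equivalent} of \cite{Po-hyperbolic}.
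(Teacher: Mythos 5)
Your reduction of (i)$\Leftrightarrow$(ii) to \eqref{equi} via the identification $F^2(H_1)\cong H^2(\TT)$, $R_1\cong U$ is exactly right and is what the paper does. Your (ii)$\Rightarrow$(iii) argument also works: pairing $c^2Q(rT',U)-Q(rT,U)\geq 0$ against $h\otimes k_z$ and dividing by $\|k_z\|^2$ really does yield $\langle K(rz,T)h,h\rangle\le c^2\langle K(rz,T')h,h\rangle$, and letting $rz$ range over $\DD$ gives (iii). The paper phrases this via the noncommutative Poisson transform at $e^{it}I$, which for $n=1$ amounts to the same thing (the Poisson kernel $K_{e^{it},r}$ is proportional to the Szeg\H o kernel $k_{re^{it}}$), so the two are essentially one computation.

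The gap is in (iii)$\Rightarrow$(ii). You claim that $\langle Q(rT,U)(h_i\otimes k_{z_i}),h_j\otimes k_{z_j}\rangle$ is "$\langle K(r\bar z_j z_i,\cdot)h_i,h_j\rangle$-flavored." It is not: computing directly,
\begin{equation*}
\langle Q(rT,U)(h_i\otimes k_{z_i}),h_j\otimes k_{z_j}\rangle
=\frac{1}{1-\bar z_i z_j}\left[\sum_{k\ge1}(rz_j)^k\langle T^{*k}h_i,h_j\rangle+\langle h_i,h_j\rangle+\sum_{k\ge1}(r\bar z_i)^k\langle T^kh_i,h_j\rangle\right],
\end{equation*}
and for $z_i\ne z_j$ the bracket has $(rz_j)^k$ multiplying $T^{*k}$ but $(r\bar z_i)^k$ multiplying $T^k$, so it is not $\langle K(\zeta,T)h_i,h_j\rangle$ for any $\zeta$. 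Moreover, even if it were, pointwise positivity of $c^2K(\zeta,T')-K(\zeta,T)$ for each fixed $\zeta$ does not by itself give positive semidefiniteness of a matrix $[\langle D(\zeta_{ij})h_i,h_j\rangle]$ built from varying $\zeta_{ij}$. To make the reproducing-kernel route rigorous one must expand $\tfrac{1}{1-\bar z_iz_j}=\sum_{m\ge0}\bar z_i^m z_j^m$, regroup by $m-p$, and recognize that the resulting coefficient matrix is a Toeplitz matrix whose symbol is $D(re^{it}):=c^2K(re^{it},T')-K(re^{it},T)\ge 0$; that Herglotz/Toeplitz positivity is exactly what the paper exhibits directly by the integral identity
\begin{equation*}
\langle(c^2Q(rT',U)-Q(rT,U))h,h\rangle_{\cH\otimes H^2(\TT)}=\frac{1}{2\pi}\int_{-\pi}^{\pi}\langle(c^2K(re^{it},T')-K(re^{it},T))h(e^{it}),h(e^{it})\rangle_\cH\,dt,
\end{equation*}
valid for any $h\in\cH\otimes H^2(\TT)$. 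So the paper's Fourier/integral argument is not just bookkeeping that your reproducing-kernel argument avoids; it is the step that actually supplies the positivity, and it is missing from your sketch.
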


\begin{proof} The equivalence $(i)\leftrightarrow (ii)$ follows from
Theorem \ref{equivalent}  of \cite{Po-hyperbolic}, in the particular
case  when $n=1$. To prove the  implication $(ii)\implies (iii)$, we
apply the noncommutative Poisson transform  (when $n=1$) at
$e^{it}I$
 to the inequality $(ii)$. We have
 $$
 K(re^{it}, T)=(\text{\rm id}\otimes P_{e^{it}I} )[Q(rT, U)]\leq
 c^2( \text{\rm id}\otimes P_{e^{it}I} )[Q(rT', U)]= c^2 K(re^{it}, T')
 $$
for any $r\in[0,1)$ and $t\in \RR$. It remains to prove that
$(iii)\implies (ii)$. Since
$$
\left<( {T^*}^k\otimes U^k)( h_m \otimes e^{imt}), h_p\otimes
e^{ipt}\right>_{\cH\otimes H^2(\TT)}= \frac{1}{2\pi} \int_{-\pi}^\pi
\left<e^{ikt} {T^*}^k(e^{imt} h_m), e^{ipt} h_p\right>_\cH dt
$$
for any $h_m, h_p\in \cH$  and  $k,m,p\in \NN$, we deduce that
\begin{equation*}
\left<\left(c^2 Q(rT',U)-Q(rT,U)\right) h(e^{it}),
h(e^{it})\right>_{\cH\otimes H^2(\TT)}= \frac{1}{2\pi}
\int_{-\pi}^\pi
\left<\left(c^2K(re^{it},T')-K(re^{it},T)\right)h(e^{it}),
h(e^{it})\right>_{\cH}
\end{equation*}
for any function $e^{it}\mapsto h(e^{it})$ in $\cH\otimes H^2(\TT)$.
Now, it is clear that $(iii)\implies (ii)$. The proof is complete.
\end{proof}

The next result makes an interesting connection between Harnack
domination in $[B(\cH)^n]_1^-$ and reconstruction operators.

\begin{theorem}
\label{equivalent4} Let $A:=[A_1,\ldots, A_n]$ and $B:=[B_1,\ldots,
B_n]$ be in $[B(\cH)^n]_1^-$ and let $c>0$. Then the following
statements are equivalent:
\begin{enumerate}
\item[(i)]
$A\overset{H}{{\underset{c}\prec}}\, B$;
\item[(ii)] $R_A\overset{H}{{\underset{c}\prec}}\, R_B$, where
$R_X:=  X_1^*\otimes R_1+\cdots +   X_n^*\otimes R_n$ is the
reconstruction operator  associated with $X:=(X_1,\ldots, X_n)\in
[B(\cH)^n]_1^-$ and the right creation operators $ R_1,\ldots, R_n$.
\item[(iii)] $R_A^*\overset{H}{{\underset{c}\prec}}\, R_B^*$;
\end{enumerate}
\end{theorem}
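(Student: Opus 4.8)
The plan is to reduce everything to the scalar case $n=1$ handled in Proposition \ref{n=1}, applied not to the operators $A,B$ themselves but to the single reconstruction operators $R_A, R_B$ acting on $\cH\otimes F^2(H_n)$, and to exploit the fact that the free pluriharmonic Poisson kernel $P(X,R)$ can be re-read as a scalar-variable ($n=1$) Poisson kernel evaluated at the reconstruction operator. The key identity I would establish first is that, for $X\in[B(\cH)^n]_1^-$ and $r\in[0,1)$,
\begin{equation*}
P(rX,R)=\sum_{k=1}^\infty (R_X^*)^k + I + \sum_{k=1}^\infty (R_X)^k \Big|_{X\mapsto rX} = Q(rR_X, R_X\text{-type shift})
\end{equation*}
— more precisely, that $P(rX,R)$ coincides with a kernel of the form $\sum_{k\ge1}(rR_X)^{*k}+I+\sum_{k\ge1}(rR_X)^k$, so that the inequality $P(rA,R)\le c^2 P(rB,R)$ for all $r\in[0,1)$ (which by \eqref{equi} is equivalent to (i)) becomes an inequality between scalar-type Poisson kernels of the contractions $R_A$ and $R_B$. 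This requires checking that $R_X^k=\sum_{|\alpha|=k}X_\alpha^*\otimes R_{\widetilde\alpha}$ (an easy induction using that the $R_i$ have orthogonal ranges and that $R_{\widetilde\alpha}$ picks out the word $\alpha$ read backwards), together with norm-convergence of the series for $\|X\|$-type bounds, which holds since $\|R_X\|=\|X_1X_1^*+\cdots+X_nX_n^*\|^{1/2}\le 1$ on the relevant domain and the geometric series converges for $rR_X$.

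Granting that identification, the equivalence $(i)\Leftrightarrow(ii)$ runs as follows. For (i)$\Rightarrow$(ii): since $R_X\in[B(\cH\otimes F^2(H_n))]_1^-$, Proposition \ref{n=1} says $R_A\overset{H}{\underset{c}\prec}R_B$ iff $Q(rR_A,U)\le c^2 Q(rR_B,U)$ for all $r\in[0,1)$, where $Q$ is the $n=1$ kernel built from the unilateral shift. One direction I would get by applying the noncommutative Poisson transform $P_{e^{it}I}$ (as in the proof of Proposition \ref{n=1}) to collapse the shift $U$ down to scalars, recovering $K(re^{it},R_A)\le c^2 K(re^{it},R_B)$, and then integrating over $t$ exactly as in the proof of $(iii)\Rightarrow(ii)$ there; the other direction is the identification above together with \eqref{equi}. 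In other words, $P(rA,R)\le c^2P(rB,R)$ for all $r$ is \emph{literally} the statement $Q(rR_A,U)\le c^2Q(rR_B,U)$ for all $r$ once one recognizes that the free pluriharmonic Poisson kernel at $rX$ is the scalar Poisson kernel evaluated at the contraction $rR_X$ with the generating isometry being the shift; hence by Proposition \ref{n=1} this is equivalent to $R_A\overset{H}{\underset{c}\prec}R_B$.

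Finally, $(ii)\Leftrightarrow(iii)$ is immediate from the general fact that for any $T\in[B(\cK)]_1^-$ one has $T\overset{H}{\underset{c}\prec}T'$ iff $T^*\overset{H}{\underset{c}\prec}T'^*$: this follows because $K(z,T)$ is self-adjoint and $K(z,T^*)=K(\bar z,T)$ (the roles of the holomorphic and anti-holomorphic parts are swapped), so the family of inequalities $\{K(z,T)\le c^2K(z,T')\}_{z\in\DD}$ is invariant under $z\mapsto\bar z$ and under $T\mapsto T^*$; combined with the $(i)\Leftrightarrow(iii)$ equivalence in Proposition \ref{n=1} this gives $R_A\overset{H}{\underset{c}\prec}R_B \Leftrightarrow R_A^*\overset{H}{\underset{c}\prec}R_B^*$. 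I expect the main obstacle to be the bookkeeping in the first step — rigorously justifying that $P(rX,R)$ is the $n=1$ Poisson kernel of the contraction $rR_X$ (getting the reversed-word indices $\widetilde\alpha$ to match up, and handling convergence uniformly in $r$ bounded away from $1$) — after which everything is a direct appeal to Proposition \ref{n=1} and \eqref{equi}.
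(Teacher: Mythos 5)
Your core identity $P(rX,R)=K(r,R_X)$ is correct (and is exactly the observation that makes the reconstruction operator the right object here): one checks by induction that $R_X^k=\sum_{|\alpha|=k}X_\alpha^*\otimes R_{\widetilde\alpha}$, so $P(rX,R)=\sum_{k\ge1}(rR_X)^k+I+\sum_{k\ge1}(rR_X^*)^k$, which is the operator-valued Poisson kernel $K(r,R_X)$ at the \emph{real} point $z=r$. Your argument for $(ii)\Rightarrow(i)$ (specialize Proposition \ref{n=1}(iii) to $z=r$ real and invoke \eqref{equi}) and for $(ii)\Leftrightarrow(iii)$ (the symmetry $K(z,T^*)=K(\bar z,T)$) are both fine and essentially match the paper's.

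There is, however, a genuine gap in your $(i)\Rightarrow(ii)$. From (i) and \eqref{equi} your identity yields only
$$K(r,R_A)\leq c^2\,K(r,R_B),\qquad r\in[0,1),$$
i.e.\ the $z\in[0,1)$ slice of the kernel family, whereas Proposition \ref{n=1} requires $K(z,R_A)\leq c^2K(z,R_B)$ for \emph{all} $z\in\DD$ (equivalently $Q(rR_A,U)\leq c^2Q(rR_B,U)$, which involves an extra tensor factor $H^2(\TT)$). Your claim that ``$P(rA,R)\leq c^2P(rB,R)$ for all $r$ is literally the statement $Q(rR_A,U)\leq c^2Q(rR_B,U)$ for all $r$'' is false: $P(rA,R)$ acts on $\cH\otimes F^2(H_n)$ while $Q(rR_A,U)$ acts on $\cH\otimes F^2(H_n)\otimes H^2(\TT)$, and the latter inequality is strictly stronger. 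The passage from the real slice to the full disc is precisely what the paper supplies by applying the noncommutative Poisson transform at the row contraction $[U^*\otimes R_1,\ldots,U^*\otimes R_n]$ to the inequality $P(rA,S)\leq c^2P(rB,S)$, producing $Q(rR_A,U)\leq c^2Q(rR_B,U)$ directly. Alternatively, your route can be repaired without that transform: Harnack domination is invariant under the rotation $X\mapsto e^{it}X$ (if $\text{\rm Re}\,p\geq 0$ then $\text{\rm Re}\,p(e^{-it}\,\cdot)\geq 0$), so (i) gives $e^{it}A\overset{H}{\underset c\prec}e^{it}B$ for every $t$; combined with $R_{e^{it}A}=e^{-it}R_A$ and $K(re^{it},T)=K(r,e^{-it}T)$, your identity then yields $K(re^{it},R_A)=P(re^{it}A,R)\leq c^2P(re^{it}B,R)=K(re^{it},R_B)$ for all $r,t$, which is the hypothesis Proposition \ref{n=1} actually needs. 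As written, though, your $(i)\Rightarrow(ii)$ does not close.
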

\begin{proof}
Assume that (i) holds. According to Theorem \ref{equivalent} from
\cite{Po-hyperbolic}, we have
\begin{equation}
\label{P<P} P(rA, S)\leq c^2P(rB,S)
\end{equation}
for any $r\in [0,1)$, where $S:=(S_1,\ldots, S_n)$ is the $n$-tuple
of left creation operators. Let $U$ be the unilateral shift on the
Hardy space $H^2(\TT)$. Since $R_i^* R_j=\delta_{ij} I$, it is easy
to see that $[U^*\otimes R_1,\ldots, U^*\otimes R_n]$ is a row
contraction acting from $[H^2(\TT)\otimes F^2(H_n)]^{(n)}$ to
$H^2(\TT)\otimes F^2(H_n)$. Using the noncommutative Poisson
transform  at $[U^*\otimes R_1,\ldots, U^*\otimes R_n]$ and
inequality \eqref{P<P}, we have
\begin{equation*}
\begin{split}
Q(rR_A,U)&= \left(
\text{\rm id}\otimes P_{[U^*\otimes R_1,\ldots, U^*\otimes R_n]}\right)[P(rA,S)]\\
&\leq c^2\left(
\text{\rm id}\otimes P_{[U^*\otimes R_1,\ldots, U^*\otimes R_n]}\right)[P(rB,S)]\\
&=c^2Q(rR_B,U)
\end{split}
\end{equation*}
for any $r\in [0,1)$. Applying Proposition \ref{n=1}, we deduce that
$R_A\overset{H}{{\underset{c}\prec}}\, R_B$. Conversely, assume that
(ii) holds. Using again Proposition \ref{n=1}, we have
\begin{equation}
\label{K<K}
 K(re^{it},R_A)\leq c^2K(re^{it},R_B),\qquad r\in [0,1),
t\in \RR.
\end{equation}
Taking $t=0$, we deduce that $P( rA, R)\leq c^2 P( rB,R)$ for any
$r\in [0,1)$, which   implies $A\overset{H}{{\underset{c}\prec}}\,
B$. The equivalence $(ii)\leftrightarrow (iii)$ is due to
Proposition \ref{n=1} and the fact that \eqref{K<K} is equivalent to
\begin{equation*}
 K(re^{it},R_A^*)\leq c^2K(re^{it},R_B^*),\qquad r\in [0,1),
t\in \RR.
\end{equation*}
The proof is complete.
\end{proof}

\bigskip

\section{Harnack type equivalence on $[B(\cH)^n]_1^-$ and dilations}

In this section, using  noncommutative dilation theory for row
contractions (\cite{Po-isometric}, \cite{Po-charact}), we obtain
several results concerning the Harnack
 parts of $[B(\cH)^n]_1^-$.

 Since  the Harnack type domination $\overset{H}{\prec} $
is a preorder relation on $[B(\cH)^n]_1^-$, it induces an equivalent
relation $\overset{H}\sim$ on $[B(\cH)^n]_1^-$, which we call
Harnack equivalence. The equivalence classes with respect to
$\overset{H}\sim$ are called Harnarck parts of $[B(\cH)^n]_1^-$.  It
is easy to see that $A$ and $B$ are Harnack equivalent (denote
$A\overset{H}{\sim}\, B$)  if and only if there exists $c\geq 1$
such that
\begin{equation*}
 \frac{1}{c^2}\text{\rm Re}\,p(B_1,\ldots, B_n)\leq
\text{\rm Re}\,p(A_1,\ldots, A_n)\leq c^2 \text{\rm
Re}\,p(B_1,\ldots, B_n)
\end{equation*}
for any noncommutative polynomial with matrix-valued coefficients
$p\in \CC[X_1,\ldots, X_n]\otimes M_{m}$, $m\in \NN$, such that
$\text{\rm Re}\,p\geq 0$. We also use the notation
$A\overset{H}{{\underset{c}\sim}}\, B$ \ if \
$A\overset{H}{{\underset{c}\prec}}\, B$ \ and \
$B\overset{H}{{\underset{c}\prec}}\, A$.

First we consider a few characterizations for Harnack domination in
$[B(\cH)^n]_1^-$ (other characterizations were considered in
\cite{Po-hyperbolic}).

\begin{theorem}
\label{Harnack} Let $A:=[A_1,\ldots, A_n]$ and $B:=[B_1,\ldots,
B_n]$ be in $[B(\cH)^n]_1^-$ and let $c\geq 1$.  Then the following
statements are equivalent:
\begin{enumerate}
\item[(i)] $A\overset{H}{{\underset{c}\sim}}\, B$;
\item[(ii)] $\frac{1}{c^2} K_B\leq K_A\leq c^2 K_B,$
 where $K_X$ is the
multi-Toeplitz kernel associated with $X\in [B(\cH)^n]_1^-$;
\item[(iii)] $R_A\overset{H}{{\underset{c}\sim}}\, R_B$, where $R_X$ is the reconstruction
operator associated with $X\in [B(\cH)^n]_1^-$;
\item[(iv)] $L_{B,A}$ is an invertible operator  with
$\|L_{B,A}\|\leq c$ and $\|L_{B,A}^{-1}\|\leq c$.
\end{enumerate}
\end{theorem}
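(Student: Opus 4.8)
The plan is to prove the chain of equivalences $(i)\Leftrightarrow(ii)\Leftrightarrow(iii)\Leftrightarrow(iv)$ by reducing everything to the one-directional results already established. The symmetry of the Harnack equivalence relation means each statement is just the ``two-sided'' version of a one-sided statement proved earlier, so the work is largely bookkeeping.

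First I would handle $(i)\Leftrightarrow(ii)$. By definition $A\overset{H}{{\underset{c}\sim}}\, B$ means $A\overset{H}{{\underset{c}\prec}}\, B$ and $B\overset{H}{{\underset{c}\prec}}\, A$. Applying Theorem \ref{equivalent} to each of these gives $K_A\leq c^2 K_B$ and $K_B\leq c^2 K_A$, the latter being exactly $\frac{1}{c^2}K_B\leq K_A$. Conversely $\frac{1}{c^2}K_B\leq K_A\leq c^2 K_B$ unpacks to the two inequalities $K_A\leq c^2K_B$ and $K_B\leq c^2K_A$, and Theorem \ref{equivalent} turns these back into the two Harnack dominations. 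The equivalence $(i)\Leftrightarrow(iii)$ is handled the same way, using Theorem \ref{equivalent4} in place of Theorem \ref{equivalent}: $R_A\overset{H}{{\underset{c}\sim}}\, R_B$ is by definition $R_A\overset{H}{{\underset{c}\prec}}\, R_B$ together with $R_B\overset{H}{{\underset{c}\prec}}\, R_A$, and Theorem \ref{equivalent4} says each of these is equivalent to the corresponding domination between $A$ and $B$.

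The interesting equivalence is $(i)\Leftrightarrow(iv)$, and this is where the dilation-theoretic content enters. Assume $(i)$. Then $A\overset{H}{{\underset{c}\prec}}\, B$, so by Theorem \ref{equivalent2} there is $L_{B,A}\in B(\cK_B,\cK_A)$ with $\|L_{B,A}\|\leq c$, $L_{B,A}|_\cH=I_\cH$, and $L_{B,A}W_i=V_iL_{B,A}$. Symmetrically, $B\overset{H}{{\underset{c}\prec}}\, A$ produces $L_{A,B}\in B(\cK_A,\cK_B)$ with $\|L_{A,B}\|\leq c$, $L_{A,B}|_\cH=I_\cH$, and $L_{A,B}V_i=W_iL_{A,B}$. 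The key point is to show $L_{A,B}=L_{B,A}^{-1}$. For this I would use the explicit formula \eqref{LBA}: both composites $L_{A,B}L_{B,A}$ and $L_{B,A}L_{A,B}$ fix every vector of the form $\sum_{|\alpha|\leq m}W_\alpha h_\alpha$ (resp. $\sum_{|\alpha|\leq m}V_\alpha h_\alpha$), because $L_{B,A}\big(\sum W_\alpha h_\alpha\big)=\sum V_\alpha h_\alpha$ and then $L_{A,B}\big(\sum V_\alpha h_\alpha\big)=\sum W_\alpha h_\alpha$, using $L_{A,B}|_\cH=I_\cH$ and the intertwining relations (equivalently, that $L_{A,B}$ is built from \eqref{LBA} with the roles of $V$ and $W$ swapped). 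Since $\cK_B=\bigvee_{\alpha}W_\alpha\cH$ and $\cK_A=\bigvee_\alpha V_\alpha\cH$ by minimality of the isometric dilations, these composites are the identity on dense subspaces, hence on all of $\cK_B$ and $\cK_A$ respectively. Therefore $L_{B,A}$ is invertible with inverse $L_{A,B}$, and $\|L_{B,A}\|\leq c$, $\|L_{B,A}^{-1}\|=\|L_{A,B}\|\leq c$, which is $(iv)$.

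For the converse $(iv)\Rightarrow(i)$: if $L_{B,A}$ is invertible with $\|L_{B,A}\|\leq c$ and $\|L_{B,A}^{-1}\|\leq c$, then the hypothesis $\|L_{B,A}\|\leq c$ together with $L_{B,A}|_\cH=I_\cH$ and $L_{B,A}W_i=V_iL_{B,A}$ gives $A\overset{H}{{\underset{c}\prec}}\, B$ by the converse half of Theorem \ref{equivalent2}. For the reverse domination, set $M:=L_{B,A}^{-1}\in B(\cK_A,\cK_B)$; then $\|M\|\leq c$, $M V_i = W_i M$ follows by inverting the intertwining relation, and $M|_\cH=I_\cH$ because $L_{B,A}|_\cH=I_\cH$ forces $M|_\cH=I_\cH$ as well. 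Applying Theorem \ref{equivalent2} with the roles of $A$ and $B$ interchanged (so that $M$ plays the role of $L_{A,B}$) yields $B\overset{H}{{\underset{c}\prec}}\, A$. Hence $A\overset{H}{{\underset{c}\sim}}\, B$, completing the cycle. The one step needing genuine care — the main obstacle, such as it is — is the identification $L_{A,B}=L_{B,A}^{-1}$ and, in the converse, checking that $M=L_{B,A}^{-1}$ actually restricts to the identity on $\cH$ and intertwines correctly; both follow cleanly once one works on the dense spanning subspaces supplied by minimality, but it is the place where one must be explicit rather than invoke a black box.
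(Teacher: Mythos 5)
Your proof is correct and follows essentially the same route as the paper: $(i)\Leftrightarrow(ii)$ and $(i)\Leftrightarrow(iii)$ are two-sided applications of Theorems~\ref{equivalent} and~\ref{equivalent4}, and $(i)\Leftrightarrow(iv)$ is a two-sided application of Theorem~\ref{equivalent2}, with the identification $L_{A,B}=L_{B,A}^{-1}$ handled on the dense spanning sets $\bigvee_\alpha W_\alpha\cH$ and $\bigvee_\alpha V_\alpha\cH$. The paper states the proof very tersely (citing the three theorems) and records the explicit fact $L_{B,A}^{-1}=L_{A,B}$ separately in Lemma~\ref{OMr}, which is exactly the detail you correctly fill in.
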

\begin{proof}
The equivalences $(i)\leftrightarrow (ii)$  and  $(i)\leftrightarrow
(iv)$ are  due to  Theorem \ref{equivalent} and  Theorem
\ref{equivalent2}, respectively. Note also that the equivalence
$(i)\leftrightarrow (iii)$  follows from  Theorem \ref{equivalent4}.
\end{proof}

Let us recall \cite{Po-isometric} the Wold type decomposition for
sequences of isometries with orthogonal ranges. Let $V:=[V_1,\ldots,
V_n]$, $V_i\in B(\cK)$, be such that $V_i^*V_j=\delta_{ij}I$. Then
$\cK$ decomposes into an orthogonal sum $\cK=\cG_V\oplus M_+(\cL_V)$
such that $\cG_V$ and $M_+(\cL_V)$ reduce each operator $V_i$,
$i=1,\ldots, n$, and such that $\left(I_\cK-\sum_{i=1}^n V_i
V_i^*\right)|_{\cG_V}=0$ and $\left(V_1|_{M_+(\cL_V)},\ldots,
V_n|_{M_+(\cL_V)}\right)$ is unitarilly equivalent to the $n$-tuple
of left creation operators $(S_1\otimes I_{\cL_V},\ldots, S_n\otimes
I_{\cL_V})$. Moreover, the decomposition is unique and we have
$$
\cG_V=\bigcap_{k=0}^\infty \left(\bigoplus_{|\alpha|=k}V_\alpha
\cK_V\right)\quad \text{ and } \quad M_+(\cL_V)=\bigoplus_{\alpha\in
\FF_n^+} V_\alpha \cL_V,
$$
where $\cL_V:=\cK\ominus\left(\oplus_{i=1}^n V_i\cK\right)$.

\begin{theorem}
\label{equivalent3} Let $A:=[A_1,\ldots, A_n]$ and $B:=[B_1,\ldots,
B_n]$ be in $[B(\cH)^n]_1^-$ and let $V:=[V_1,\ldots, V_n]$ on
$\cK_A\supseteq \cH$ and $W:=[W_1,\ldots, W_n]$ on $\cK_B\supseteq
\cH$ be the minimal isometric dilations of $A$ and $B$,
respectively. Then the following statements
 hold.
\begin{enumerate}
\item[(i)]
If $A\overset{H}{\sim}\, B$, then $V$ and $W$ are unitarily
equivalent.
\item[(ii)]
 Let $\cK_V=\cG_V\oplus M(\cL_V)$ and
$\cK_W=\cG_W\oplus M(\cL_W)$ be   the Wold type decompositions of
$V$ and $W$, respectively, and  let $L_{B,A}$ be the operator
defined by  \eqref{LBA}. If $A\overset{H}{\sim}\, B$, then
 $$
  L_{B,A}^* (\cL_V)=\cL_W \quad \text{ and }  \quad
  L_{B,A}(\cG_W)=\cG_V.
  $$
  \end{enumerate}
  \end{theorem}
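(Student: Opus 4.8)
The plan is to read off both statements from the invertibility of $L_{B,A}$. First I would observe that since $A\overset{H}{\sim}B$ there is $c\ge 1$ with $A\overset{H}{\underset{c}\sim}B$, so by Theorem~\ref{Harnack} the operator $L:=L_{B,A}$ is invertible with bounded inverse; moreover $L^{-1}=L_{A,B}$, since $L_{A,B}L_{B,A}$ fixes $\cH$ and commutes with each $W_i$ (from $L_{A,B}V_i=W_iL_{A,B}$ and $L_{B,A}W_i=V_iL_{B,A}$), hence equals $I_{\cK_B}$ on $\bigvee_{\alpha\in\FF_n^+}W_\alpha\cH=\cK_B$, and symmetrically $L_{B,A}L_{A,B}=I_{\cK_A}$. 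Thus both $L$ and $L^{-1}=L_{A,B}$ are bounded operators intertwining the minimal isometric dilations in the sense $LW_i=V_iL$ and $L^{-1}V_i=W_iL^{-1}$, and taking adjoints also $W_i^*L^*=L^*V_i^*$ and $V_i^*(L^{-1})^*=(L^{-1})^*W_i^*$.

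For part (ii) I would then argue directly. Using $\cL_V=\bigcap_{i}\ker V_i^*$, the relation $W_i^*L^*=L^*V_i^*$ gives $L^*(\cL_V)\subseteq\cL_W$ at once, and the analogous relation for $L^{-1}$ gives $(L^*)^{-1}(\cL_W)\subseteq\cL_V$, hence $\cL_W\subseteq L^*(\cL_V)$; so $L^*(\cL_V)=\cL_W$. Similarly, using $\cG_V=\bigcap_{k}\bigoplus_{|\alpha|=k}V_\alpha\cK_A$ (and the same for $W$): for $x=\sum_{|\alpha|=k}W_\alpha x_\alpha$ we have $Lx=\sum_{|\alpha|=k}V_\alpha(Lx_\alpha)$, which lies in $\bigoplus_{|\alpha|=k}V_\alpha\cK_A$ because the ranges $V_\alpha\cK_A$, $|\alpha|=k$, are mutually orthogonal; hence $L(\cG_W)\subseteq\cG_V$, and applying the same to $L^{-1}$ yields equality. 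Part (ii) is then complete; all of this is routine.

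For part (i) I would combine (ii) with the Wold decomposition. By (ii), $L$ restricts to a bounded invertible operator $\cG_W\to\cG_V$ intertwining the $n$-tuples $[V_i|_{\cG_V}]$ and $[W_i|_{\cG_W}]$, which by the Wold decomposition satisfy $\sum_i V_iV_i^*=I_{\cG_V}$ and $\sum_i W_iW_i^*=I_{\cG_W}$. The key point is that any bounded invertible intertwiner $T$ of two such $n$-tuples is automatically implemented by a unitary: from $\sum_j W_jW_j^*=I$ one gets $V_i^*T=\sum_j V_i^*V_j T W_j^*=TW_i^*$, so $T^*T$ commutes with each $W_i$, hence so does $(T^*T)^{1/2}$, and $T(T^*T)^{-1/2}$ is the desired unitary intertwiner. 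Thus $[V_i|_{\cG_V}]\cong[W_i|_{\cG_W}]$. On the other hand $L^*$ is invertible and $L^*(\cL_V)=\cL_W$, so $\dim\cL_V=\dim\cL_W$, which makes the pure parts $[S_i\otimes I_{\cL_V}]$ and $[S_i\otimes I_{\cL_W}]$ of the Wold decompositions unitarily equivalent. Adding the two unitary equivalences across $\cK_A=\cG_V\oplus M(\cL_V)$ and $\cK_B=\cG_W\oplus M(\cL_W)$ gives $V\cong W$, proving (i).

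I expect part (i) to be the only place needing care, and the main obstacle is this: one is tempted to take the polar decomposition $L_{B,A}=\Phi\,|L_{B,A}|$ and declare $\Phi$ the unitary equivalence, but this fails because $L_{B,A}$ intertwines the isometries $W_i\mapsto V_i$ but not their adjoints, so $|L_{B,A}|$ need not commute with $W_i$. The Wold decomposition is exactly the device that repairs this: on the row-unitary summand the adjoint-intertwining $V_i^*T=TW_i^*$ holds automatically (so polar decomposition does work there), while on the pure summand unitary equivalence is just a count of multiplicities, supplied by the identity $L^*(\cL_V)=\cL_W$ from (ii).
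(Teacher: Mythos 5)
Your proof is correct and, at the conceptual level, follows the same route as the paper: both rest on the invertibility of $L_{B,A}$ (Theorem~\ref{Harnack}) together with the Wold decomposition of the minimal isometric dilations. The difference is that the paper disposes of both parts by citing Theorem~2.1 of \cite{Po-funct} as a black box, whereas you supply the argument inline. What you make explicit is worth noting: part~(ii) is an elementary consequence of the two intertwining relations $L W_i = V_i L$ and $W_i^* L^* = L^* V_i^*$ applied to the descriptions $\cL_V=\bigcap_i \ker V_i^*$ and $\cG_V=\bigcap_k\bigoplus_{|\alpha|=k}V_\alpha\cK_A$; and part~(i) splits across the Wold decomposition, using the standard fact that an invertible bounded intertwiner of two row-unitary tuples is automatically unitarizable (via the computation $V_i^*T=TW_i^*$, whence $|T|$ commutes with the $W_i$ and the polar part $T|T|^{-1}$ is the unitary), while on the pure parts unitary equivalence reduces to $\dim\cL_V=\dim\cL_W$, read off from $L^*(\cL_V)=\cL_W$. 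Your closing remark about why a naive polar decomposition of $L_{B,A}$ on all of $\cK_B$ fails is exactly the right thing to emphasize: $L_{B,A}$ intertwines $W_i\mapsto V_i$ but not the adjoints, and the Wold decomposition is precisely what isolates the summand where the adjoint-intertwining is automatic.
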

\begin{proof} To
prove part (i), note that, due to Theorem \ref{equivalent2} and
Theorem \ref{Harnack}, $L_{B,A}$ is an invertible operator such that
$ L_{B,A} W_i=V_iL_{B,A}$, $ i=1,\ldots,n$. Applying Theorem 2.1
from \cite{Po-funct}, we deduce that $V$ and $W$ are unitarilly
equivalent. Part (ii) is a consequence of  Theorem \ref{Harnack},
part (iv), and the proof of Theorem 2.1 from \cite{Po-funct}.
\end{proof}

\begin{corollary}
Let $A:=[A_1,\ldots, A_n]$ and $B:=[B_1,\ldots, B_n]$ be in
$[B(\cH)^n]_1^-$ such that $A\overset{H}{\sim}\, B$.
  Then the following
  properties hold.
  \begin{enumerate}
 \item[(i)]
$A$ is a pure   row contraction if and only if $B$ has the same
property.
\item[(ii)] $A_1 A_1^*+\cdots + A_nA_n^*=I$  if and only if $B_1 B_1^*+\cdots +
B_nB_n^*=I$.
\item[(iii)] $\text{\rm rank\,}\Delta_A=\text{\rm rank\,} \Delta_B$.
\end{enumerate}
\end{corollary}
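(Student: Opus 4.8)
The plan is to deduce all three statements from Theorem~\ref{equivalent3}, which tells us that if $A\overset{H}{\sim}\,B$ then the minimal isometric dilations $V$ on $\cK_A$ and $W$ on $\cK_B$ are unitarily equivalent, and moreover the intertwining unitary $L_{B,A}$ carries $\cG_W$ onto $\cG_V$ and $L_{B,A}^*$ carries $\cL_V$ onto $\cL_W$. Since a minimal isometric dilation is unique up to isomorphism, the idea is that every structural invariant of the dilation that can be read off from the Wold decomposition is automatically shared by $A$ and $B$, and each of (i)--(iii) is exactly such an invariant.

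For part~(i), the key observation is that $A$ is a pure row contraction precisely when $\cG_V=\{0\}$, i.e.\ when $V$ is unitarily equivalent to a multiple of the left creation operators $(S_1\otimes I_{\cL_V},\dots,S_n\otimes I_{\cL_V})$; this is the Wold decomposition recalled just before Theorem~\ref{equivalent3} together with the identity $K_T^*K_T=I_\cH-\text{SOT-}\lim_k\sum_{|\alpha|=k}T_\alpha T_\alpha^*$. Since $L_{B,A}(\cG_W)=\cG_V$ and $L_{B,A}$ is invertible, $\cG_V=\{0\}$ if and only if $\cG_W=\{0\}$, so $A$ is pure iff $B$ is pure. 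For part~(ii), I would use that $A_1A_1^*+\cdots+A_nA_n^*=I_\cH$ if and only if $\cH\subseteq\cL_V$ (more precisely, $\cH$ is orthogonal to $\oplus_i V_i\cK_V$, equivalently $\sum_i V_iV_i^*$ acts as $0$ on $\cH$ via $V_i^*|_\cH=A_i^*$); since $L_{B,A}|_\cH=I_\cH$ and $L_{B,A}^*(\cL_V)=\cL_W$, the defect subspace condition transfers. Alternatively, and more simply, $\sum_i A_iA_i^*=I$ says $\Delta_A=0$, which is a special case of part~(iii).

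For part~(iii), I would identify $\text{rank}\,\Delta_A$ with $\dim\cL_V$. In the standard construction of the minimal isometric dilation of a row contraction, the wandering subspace $\cL_V=\cK_V\ominus(\oplus_i V_i\cK_V)$ is isomorphic to $\cD_A=\overline{\Delta_A\cH}$, so $\dim\cL_V=\text{rank}\,\Delta_A$; likewise $\dim\cL_W=\text{rank}\,\Delta_B$. Since $L_{B,A}^*$ restricts to a bounded invertible (indeed, one may take it unitary by Theorem~\ref{equivalent3}(i)) map of $\cL_V$ onto $\cL_W$, these dimensions agree, giving $\text{rank}\,\Delta_A=\text{rank}\,\Delta_B$. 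Parts (i) and (ii) then also follow from (iii): $A$ is pure iff additionally $\cG_V=\{0\}$, handled above, and $\sum_iA_iA_i^*=I$ iff $\text{rank}\,\Delta_A=0$.

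The main obstacle I anticipate is bookkeeping rather than conceptual: one must be careful that the Wold decomposition data $\cG_V,\cL_V$ are genuinely invariants of the dilation (they are, by uniqueness of the Wold decomposition) and that the identifications $\dim\cL_V=\text{rank}\,\Delta_A$ and ``$\cG_V=\{0\}\Leftrightarrow A$ pure'' are the ones recorded in \cite{Po-isometric}; these are standard but should be cited rather than reproved. With Theorem~\ref{equivalent3} in hand, no new estimates are needed, so the proof is short once the dictionary between $A$ and the geometry of $\cK_V$ is made explicit.
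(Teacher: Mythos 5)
Your argument follows the paper's proof essentially step for step: for part~(i) you use that $A$ is pure iff $\cG_V=\{0\}$, together with $L_{B,A}(\cG_W)=\cG_V$ and the invertibility of $L_{B,A}$; for part~(iii) you use $\dim\cD_A=\dim\cL_V$, $\dim\cD_B=\dim\cL_W$ (from the geometry of the minimal isometric dilation in \cite{Po-isometric}) together with $L_{B,A}^*(\cL_V)=\cL_W$. Both of these are exactly the paper's arguments.

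For part~(ii), however, the first route you propose is not correct as stated. You claim that $A_1A_1^*+\cdots+A_nA_n^*=I_\cH$ is equivalent to $\cH\subseteq\cL_V$, ``equivalently $\sum_i V_iV_i^*$ acts as $0$ on $\cH$.'' That last condition would force $V_i^*h=0$ for every $h\in\cH$, hence $A_i^*=V_i^*|_\cH=0$, i.e.\ $A=0$, which is the opposite extreme from $\sum A_iA_i^*=I$. And if $\sum A_iA_i^*=I$ then $\Delta_A=0$, so $\cL_V\cong\cD_A=\{0\}$, and $\cH\subseteq\cL_V$ would give $\cH=\{0\}$. The correct dictionary, used in the paper, is that $\sum A_iA_i^*=I_\cH$ if and only if $\sum V_iV_i^*=I_{\cK_V}$, i.e.\ $\cK_V=\cG_V$; this transfers to $\cK_W=\cG_W$ via $L_{B,A}(\cG_W)=\cG_V$ and invertibility. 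Your ``alternatively, and more simply'' remark does repair this: $\sum A_iA_i^*=I$ iff $\Delta_A=0$ iff $\operatorname{rank}\Delta_A=0$, so (ii) is the rank-$0$ instance of (iii). That route is correct and slightly slicker than the paper's, which proves (ii) independently of (iii) via the Wold decomposition; but the first route you offer should be dropped or corrected.
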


\begin{proof}According to  \cite{Po-isometric}, $A$ is a pure row
contraction if and only if its minimal isometric dilation $V$ is a
pure row isometry, i.e., $\cK_V=M_+(\cL_V)$.  In this case we have
$\cG_V=\{0\}$ and, since $L_{B,A}$ is invertible, part (ii) of
Theorem \ref{equivalent3} implies $\cG_W=\{0\}$. Therefore,
$\cK_W=M_+(\cL_W)$, which shows that $B$ is a pure row contraction.
To prove (ii), note that, due to \cite{Po-isometric}, $A_1
A_1^*+\cdots + A_nA_n^*=I$ is and only if $V_1 V_1^*+\cdots +
V_nV_n^*=I$. The Wold type decomposition for $V$ shows that
$\cK_V=\cG_V$. Since $L_{B,A}(\cG_W)=\cG_V$ and $L_{B,A}:\cK_W\to
\cK_V$ is an invertible operator, part (ii) of Theorem
\ref{equivalent3} implies $\cG_W=\cK_W$. Therefore, we have $W_1
W_1^*+\cdots W_nW_n^*=I$. Using again \cite{Po-isometric}, we deduce
that  $B_1 B_1^*+\cdots + B_nB_n^*=I$.

The geometric structure of the minimal isometric dilations of row
contractions  (see \cite{Po-isometric}) implies that
$$
\dim \cD_A=\dim \cL_V \quad \text{ and } \quad \dim \cD_B=\dim
\cL_W.
$$
Since $L_{B,A}$ is invertible and $L_{B,A}^*(\cL_V)=\cL_W$, we
deduce that $\dim \cL_V=\dim \cL_W$, which completes the proof of
part (iii).
\end{proof}

\begin{corollary}  If  $A\in [B(\cH)^n]_1^-$
 is a row isometry (or co-isometry),
    then
 the Harnack part of $A$ reduces to $\{A\}$.
\end{corollary}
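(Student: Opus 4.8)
The plan is to reduce everything to the previously proved structural results about the intertwining operator $L_{B,A}$ and the Wold type decomposition. Suppose $A\in[B(\cH)^n]_1^-$ is a row isometry, i.e.\ $A_1A_1^*+\cdots+A_nA_n^*=I_\cH$, and let $B\in[B(\cH)^n]_1^-$ be Harnack equivalent to $A$, say $A\overset{H}{\sim}\,B$. I want to conclude $B=A$. First I would invoke part (ii) of the Corollary immediately preceding this one: since $A_1A_1^*+\cdots+A_nA_n^*=I$, Harnack equivalence forces $B_1B_1^*+\cdots+B_nB_n^*=I$ as well. Thus both $A$ and $B$ are row isometries, and in the Wold type decompositions of their minimal isometric dilations $V$ on $\cK_A$ and $W$ on $\cK_B$ we have $\cK_V=\cG_V$ and $\cK_W=\cG_W$, i.e.\ $V$ and $W$ are already isometries on $\cH$ — but more to the point, since $A_i^*=V_i^*|_\cH$ and $V_i$ acts on $\cK_A\supseteq\cH$ with $\cK_A=\bigvee_\alpha V_\alpha\cH$, the condition $\sum V_iV_i^*=I_{\cK_A}$ together with minimality forces $\cK_A=\cH$ and $V=A$; likewise $\cK_B=\cH$ and $W=B$.

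Now I would use Theorem~\ref{equivalent2} (equivalently Theorem~\ref{Harnack}(iv)): since $A\overset{H}{\sim}\,B$, the operator $L_{B,A}\in B(\cK_B,\cK_A)=B(\cH)$ is invertible, satisfies $L_{B,A}|_\cH=I_\cH$, and $L_{B,A}W_i=V_iL_{B,A}$. But $\cK_B=\cH$, so $L_{B,A}=I_\cH$, and the intertwining relation becomes $W_i=V_i$ for $i=1,\ldots,n$, that is, $B_i=A_i$. Hence the Harnack part of $A$ is $\{A\}$. The case where $A$ is a co-isometry, meaning $A_i^*A_j=\delta_{ij}I$ (so that $A$ is a ``row co-isometry''), I would handle by the analogous argument applied to $R_A^*$, or directly: a row co-isometry is in particular a pure row contraction when appropriate, and one uses part (iii) of the preceding corollary plus the rigidity of $L_{B,A}$; alternatively, since for $n=1$ an isometry or co-isometry has trivial Harnack part by Foia\c s's theorem, and the reconstruction operator trick of Theorem~\ref{equivalent4} transports the $n$-variable problem to a one-variable statement about $R_A$, one gets $R_A=R_B$, whence $A=B$ because the reconstruction operator determines the tuple.

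The main obstacle I anticipate is the degenerate-dimension bookkeeping in the first paragraph: one must be careful that ``minimal isometric dilation of a row isometry is the row isometry itself acting on $\cH$'' is genuinely what the Wold decomposition gives, rather than only that $\cG_V=\cK_V$; this is where property (i) of the preceding Corollary (pure case) and property (ii) (Cuntz case) together with the minimality clause $\cK_V=\bigvee_\alpha V_\alpha\cH$ must be combined correctly. Once $\cK_A=\cH$ and $\cK_B=\cH$ are established, the rest is immediate from $L_{B,A}|_\cH=I_\cH$ and the intertwining identity. For the co-isometry case the cleanest route is probably to note that $A$ being a row co-isometry means $\Delta_A=0$, hence by part (iii) of the preceding corollary $\Delta_B=0$ too, and then the free pluriharmonic Poisson kernel / reconstruction operator characterization (Theorem~\ref{equivalent4}) collapses the two-sided Harnack inequality to an equality $R_A=R_B$, giving $A=B$.
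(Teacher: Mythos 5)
Your proposal swaps the two notions, and as a consequence applies the wrong proof strategy to each case.

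In this paper a \emph{row isometry} is an $n$-tuple with $A_i^*A_j=\delta_{ij}I_\cH$ (equivalently, the row operator $[A_1,\dots,A_n]:\cH^{(n)}\to\cH$ is isometric, i.e.\ $\Delta_{A^*}=0$), while a \emph{co-isometry} means $A_1A_1^*+\cdots+A_nA_n^*=I_\cH$ --- exactly as spelled out in the paper's proof (``a co-isometry, i.e., $A_1A_1^*+\cdots+A_nA_n^*=I$''). You have these reversed. The error is not merely terminological, because the two cases require genuinely different arguments, and yours are attached to the wrong hypotheses.

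For the case $\sum A_iA_i^*=I$ (the true co-isometry case, which you call ``row isometry''), the key step in your first paragraph --- that $\sum_i V_iV_i^*=I_{\cK_A}$ together with minimality forces $\cK_A=\cH$ and $V=A$ --- is simply false. A co-isometry that is not also a row isometry has $\cD_{A^*}\neq\{0\}$, so its minimal isometric dilation acts on the strictly larger space $\cK_A=\cH\oplus[F^2(H_n)\otimes\cD_{A^*}]$. What $\sum V_iV_i^*=I_{\cK_A}$ gives is only that $V$ is a Cuntz row isometry with no pure part ($\cG_V=\cK_V$); it does not collapse $\cK_A$ to $\cH$. (Already for $n=1$: a co-isometry that is not unitary dilates to a proper unitary extension.) This is precisely the case where the paper instead uses the reconstruction operator: from $\sum A_iA_i^*=I$ one gets $R_A^*R_A=I$, so $R_A$ is an ordinary ($n=1$) isometry, Theorem~\ref{Harnack} transfers $A\overset{H}{\sim}B$ to $R_A\overset{H}{\sim}R_B$, and the row-isometry half of the corollary applied with $n=1$ gives $R_A=R_B$, hence $A=B$.

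Conversely, for $A_i^*A_j=\delta_{ij}I$ (the true row-isometry case, which you call ``co-isometry''), your suggested reconstruction-operator route does not apply: here $R_A^*R_A=(\sum_iA_iA_i^*)\otimes I$ need not be $I$, and $R_AR_A^*=I\otimes(I-P_\CC)\neq I$, so $R_A$ is neither an isometry nor a co-isometry in general, and Foia\c{s}'s $n=1$ result cannot be invoked through it. This case is the one with the direct geometric argument: $\Delta_{A^*}=0$ forces $\cD_{A^*}=\{0\}$, hence $\cK_A=\cH$ and $V=A$ with no Wold-decomposition analysis needed; then the invertibility of $L_{B,A}:\cK_B\to\cH$ (Theorem~\ref{Harnack}(iv)) combined with $L_{B,A}|_\cH=I_\cH$ forces $\cK_B=\cH$ and $L_{B,A}=I_\cH$, so $B_i=W_i=V_i=A_i$. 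Note that the paper does not (and you should not) try to show $\cK_B=\cH$ by a separate Wold argument; it drops out of the rigidity of $L_{B,A}$. Your second paragraph correctly anticipates that this ``degenerate-dimension bookkeeping'' is the crux, but the resolution is $\Delta_{A^*}=0$, not $\sum V_iV_i^*=I$.
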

\begin{proof}
 Let $A $ and $B $ be in $[B(\cH)^n]_1^-$ such that $A\overset{H}{\sim}\, B$.
  First, assume that $A$ is a row
 isometry and
  let $W:=[W_1,\ldots, W_n]$ on $\cK_B\supseteq \cH$
be the minimal isometric dilations of  $B$. According to Theorem
\ref{equivalent2}, $L_{B,A}\in B(\cK_B, \cK_A)$  is an invertible
operator   such that $L_{B,A}|_\cH=I_\cH$ and
\begin{equation*}
 L_{B,A} W_i=V_iL_{B,A},\qquad i=1,\ldots,n.
\end{equation*}
Since $A$ is a row isometry, we have $\cK_A=\cH$ and $V=A$.  Now,
one can easily see that $\cK_B=\cH$,  and $W_i=B_i$, $i=1,\ldots,
n$. Hence $L_{B,A}=I_\cH$ and the intertwining  relation above
implies $W_i=A_i$, $i=1,\ldots, n$. Consequently, $A=B$.

Now, assume that $A$ is a co-isometry, i.e., $A_1A_1^*+\cdots +
A_nA_n^*=I$. Setting $R_A:=\sum_{i=1}^n A_i^*\otimes R_i$, we have
$R_A^* R_A=I$. On the other hand, due to Theorem \ref{Harnack},
$A\overset{H}{\sim}\, B$ if and only if $R_A\overset{H}{\sim}\,
R_B$. Applying  the first part of this corollary when $n=1$ and $A$
and $B$ are replaced by $R_A$ and $R_B$, respectively, we deduce
that $R_A=R_B$. Consequently, $A=B$, which completes the proof.
\end{proof}

The characteristic  function associated with an arbitrary row
contraction $T:=[T_1,\ldots, T_n]$, \ $T_i\in B(\cH)$, was
introduced in \cite{Po-charact} (see \cite{SzF-book} for the
classical case $n=1$) and it was proved to be  a complete unitary
invariant for completely non-coisometric  (c.n.c.) row contractions.
The characteristic function  of $T$ is  a   multi-analytic operator
with respect to $S_1,\ldots, S_n$,
$$
\tilde{\Theta}_T:F^2(H_n)\otimes \cD_{T^*}\to F^2(H_n)\otimes \cD_T,
$$
with the formal Fourier representation
\begin{equation*}
\begin{split}
 \Theta_T(R_1,\ldots, R_n):= -I_{F^2(H_n)}\otimes T+
\left(I_{F^2(H_n)}\otimes \Delta_T\right)&\left(I_{F^2(H_n)\otimes
\cH}
-\sum_{i=1}^n R_i\otimes T_i^*\right)^{-1}\\
&\left[R_1\otimes I_\cH,\ldots, R_n\otimes I_\cH \right]
\left(I_{F^2(H_n)}\otimes \Delta_{T^*}\right),
\end{split}
\end{equation*}
where $R_1,\ldots, R_n$ are the right creation operators on the full
Fock space $F^2(H_n)$.
 More precisely, we have
$$
\tilde\Theta_T=\text{\rm SOT-}\lim_{r\to 1}\Theta_T (rR_1,\ldots,
rR_n).
$$
For definitions and basic facts concerning multi-analytic operators
on Fock spaces we refer to Section 5 and
\cite{Po-charact}--\cite{Po-analytic}.

 We obtained in
\cite{Po-charact} a functional model for c.n.c row contractions. In
the particular case when $T$ is a pure row contraction, we proved
that $T$ is unitarilly equivalent to the model row contraction
$$\TT:=(P_{\HH}(S_1\otimes
I_{\cD_T})|_{\HH},\ldots,P_{\HH}(S_n\otimes I_{\cD_T})|_{\HH}),$$
where $P_\HH$ is the orthogonal projection  on
$$\HH_T:=[F^2(H_n)\otimes \cD_T]\ominus \tilde \Theta_T (F^2(H_n)\otimes
\cD_{T^*})
$$
and $S_1,\ldots, S_n$ are the left creation operators on the full
Fock space $F^2(H_n)$. In this case, the minimal isometric dilation
of $\TT$ is $[S_1\otimes I_{\cD_T},\ldots, S_n\otimes I_{\cD_T}]$.

 Now, we can show that there is a strong connection between the operator $L_{B,A}$, which
  is essentially a multi-analytic operator, and
  the characteristic functions of $A$ and $B$.

\begin{theorem}
\label{analytic} Let $A:=[A_1,\ldots, A_n]$ and $B:=[B_1,\ldots,
B_n]$ be  pure row contractions with the property that
$A\overset{H}{\sim}\,B$. Let $\tilde\Theta_A: F^2(H_n)\otimes
\cD_{A^*}\to F^2(H_n)\otimes \cD_{A}$ and $\tilde\Theta_B:
F^2(H_n)\otimes \cD_{B^*}\to F^2(H_n)\otimes \cD_{B}$ be the
characteristic functions of $A$ and $B$, respectively. Then there is
an invertible multi-analytic operators $L:F^2(H_n)\otimes \cD_{B}\to
F^2(H_n)\otimes \cD_{A}$ such that
$$
L^*\tilde\Theta_A[F^2(H_n)\otimes
\cD_{A^*}]=\tilde\Theta_B[F^2(H_n)\otimes \cD_{B^*}].
$$
\end{theorem}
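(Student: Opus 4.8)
The plan is to show that the intertwining operator $L_{B,A}$ of Sections 1 and 3 is already the operator $L$ we want. Since $A\overset{H}{\sim}\,B$, Theorem \ref{equivalent2} together with Theorem \ref{Harnack}(iv) supplies an invertible operator $L_{B,A}\in B(\cK_B,\cK_A)$ with $L_{B,A}|_\cH=I_\cH$ and $L_{B,A}W_i=V_iL_{B,A}$ for $i=1,\ldots,n$, where $V$ on $\cK_A$ and $W$ on $\cK_B$ are the minimal isometric dilations of $A$ and $B$. Because $A$ and $B$ are pure, the functional model recalled above identifies $\cK_A$ with $F^2(H_n)\otimes\cD_A$ so that $V_i=S_i\otimes I_{\cD_A}$ and $\cH$ sits inside as $\HH_A$, and likewise $\cK_B$ with $F^2(H_n)\otimes\cD_B$, $W_i=S_i\otimes I_{\cD_B}$, $\cH\cong\HH_B$. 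Under these identifications the relations $L_{B,A}(S_i\otimes I_{\cD_B})=(S_i\otimes I_{\cD_A})L_{B,A}$ say precisely that $L:=L_{B,A}:F^2(H_n)\otimes\cD_B\to F^2(H_n)\otimes\cD_A$ is a multi-analytic operator with respect to $S_1,\ldots,S_n$; it is invertible, and $L^{-1}$ is then multi-analytic as well. This is where the hypothesis that $A$ and $B$ be pure is used.

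It remains to verify that $L^*$ carries the range of $\tilde\Theta_A$ onto the range of $\tilde\Theta_B$. The relation $L_{B,A}|_\cH=I_\cH$, transported through the two identifications of $\cH$ with $\HH_A\subset\cK_A$ and with $\HH_B\subset\cK_B$, means that $L$ maps $\HH_B$ bijectively onto $\HH_A$; since $L$ is invertible, $L^{-1}$ maps $\HH_A$ bijectively onto $\HH_B$. Hence, for $y\in[F^2(H_n)\otimes\cD_A]\ominus\HH_A$ and $h\in\HH_B$ we get $\langle L^*y,h\rangle=\langle y,Lh\rangle=0$ because $Lh\in\HH_A$, so $L^*$ maps $[F^2(H_n)\otimes\cD_A]\ominus\HH_A$ into $[F^2(H_n)\otimes\cD_B]\ominus\HH_B$; applying the same argument to $(L^*)^{-1}=(L^{-1})^*$ yields the reverse inclusion, so that $L^*$ maps $[F^2(H_n)\otimes\cD_A]\ominus\HH_A$ onto $[F^2(H_n)\otimes\cD_B]\ominus\HH_B$. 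By the definition of $\HH_A$ and $\HH_B$ these orthogonal complements are nothing but the closures of $\tilde\Theta_A[F^2(H_n)\otimes\cD_{A^*}]$ and $\tilde\Theta_B[F^2(H_n)\otimes\cD_{B^*}]$, so we obtain the claimed identity $L^*\tilde\Theta_A[F^2(H_n)\otimes\cD_{A^*}]=\tilde\Theta_B[F^2(H_n)\otimes\cD_{B^*}]$ (these ranges being read as closed subspaces; the invertible operator $L^*$ maps closures onto closures, so no information is lost).

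The only genuinely delicate point I anticipate is the bookkeeping of the two identifications of $\cH$: one must check that, under the unitary equivalences realizing the functional models of $A$ and $B$, the canonical embeddings $\cH\hookrightarrow\cK_A$ and $\cH\hookrightarrow\cK_B$ become the embeddings $\HH_A\hookrightarrow F^2(H_n)\otimes\cD_A$ and $\HH_B\hookrightarrow F^2(H_n)\otimes\cD_B$, so that $L_{B,A}|_\cH=I_\cH$ indeed forces $L(\HH_B)=\HH_A$; granting this, the rest is the elementary adjoint computation above. As an alternative to arguing through the model, one can use Theorem \ref{equivalent3}(ii) in the pure case, where $\cG_V=\cG_W=\{0\}$ and $\cL_V$, $\cL_W$ are identified with $\cD_A$, $\cD_B$: the assertions $L_{B,A}^*(\cL_V)=\cL_W$ and $L_{B,A}(\cG_W)=\cG_V$ recorded there encode the same geometry, and the conclusion follows in the same manner.
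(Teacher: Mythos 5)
Your proof is correct and follows essentially the same route as the paper: take the invertible intertwiner $L_{B,A}$, pass to the functional model (via the Fourier transform and the identification $\cL_V\cong\cD_A$, $\cL_W\cong\cD_B$) where the dilations become $S_i\otimes I$, observe that the resulting operator $L$ is multi-analytic and carries $\HH_B$ onto $\HH_A$, and then dualize to get $L^*$ mapping $\HH_A^\perp=\tilde\Theta_A[F^2(H_n)\otimes\cD_{A^*}]$ onto $\HH_B^\perp=\tilde\Theta_B[F^2(H_n)\otimes\cD_{B^*}]$. The only extra remark worth recording is that for pure row contractions the characteristic function is inner, so these ranges are already closed and your caveat about ``closures'' is automatic.
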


\begin{proof}
Let $A:=[A_1,\ldots, A_n]$ and $B:=[B_1,\ldots, B_n]$ be in
$[B(\cH)^n]_1^-$ and let $V:=[V_1,\ldots, V_n]$ on $\cK_A\supseteq
\cH$ and $W:=[W_1,\ldots, W_n]$ on $\cK_B\supseteq \cH$ be the
minimal isometric dilations of $A$ and $B$, respectively. Since $A$
and $B$ are pure row contraction we have $\cK_A=M_+(\cL_V)$ and
$\cK_B=M_+(\cL_W)$. According to Theorem \ref{equivalent2},
$L_{B,A}:M_+(\cL_W)\to M_+(\cL_V)$
  is an invertible operator such
that $L_{B,A}|_\cH=I_\cH$ and
\begin{equation*}
 L_{B,A} W_i=V_iL_{B,A},\qquad i=1,\ldots,n.
\end{equation*}
We recall that the {\it Fourier transform} $\Phi_V: M_+(\cL_V)\to
F^2(H_n)\otimes \cL_V$ is a unitary operator defined by
$\Phi_V(V_\alpha \ell):=e_\alpha\otimes \ell$ for any $ \ell\in
\cL_V$, $\alpha\in \FF_n^+$. Note that $\Phi_V V_i=(S_i\otimes
I_{\cL_V}) \Phi_V$, $i=1,\ldots, n$. Similarly one can define the
Fourier transform $\Phi_W$. Note that the operator
$$
\Gamma:=\Phi_V L_{B,A} \Phi_W^*: F^2(H_n)\otimes \cL_W\to
F^2(H_n)\otimes \cL_V
$$
has the property that $\Gamma (S_i\otimes I_{\cL_W})=(S_i\otimes
I_{\cL_V}) \Gamma$ for $i=1,\ldots, n$. Now, using the
identification of $\cL_V$ and $\cL_W$ with $\cD_A$ and $\cD_B$,
respectively (see \cite{Po-charact}), we deduce that there is an
invertible multi-analytic operator $L:F^2(H_n)\otimes \cD_{B}\to
F^2(H_n)\otimes \cD_{A}$ such that $L(\HH_B)=\HH_A$. Consequently,
we have
$$
L^*\tilde\Theta_A[F^2(H_n)\otimes
\cD_{A^*}]=\tilde\Theta_B[F^2(H_n)\otimes \cD_{B^*}],
$$
which completes the proof.
\end{proof}

\bigskip

\section{The operator $L_{B,A}$ and the hyperbolic  distance on  the Harnack parts of
$[B(\cH)^n]_1^-$}

 In this section we  express the
hyperbolic distance  $\delta$ in terms of the intertwining operator
$L_{B,A}$ and
 obtain  an explicit formula for the norm  $L_{B,A}$ in terms of the reconstruction
operators $R_A$ and $R_B$. We also show that $\|L_{B,A}\|$ is
invariant under the automorphism group $Aut([B(\cH)^n]_1)$.

 In \cite{Po-hyperbolic}, we introduced a hyperbolic  ({\it
Poincar\'e-Bergman} type)   metric $\delta$ on the Harnack parts of
$[B(\cH)^n]_1^-$ as follows. If $A$ and $B$ are Harnack equivalent
in $[B(\cH)^n]_1^-$, we define
\begin{equation}
\label{hyperbolic} \delta(A,B):=\ln \omega(A,B),
\end{equation}
 where
\begin{equation*}
 \omega(A,B):=\inf\left\{ c\geq 1: \
A\overset{H}{{\underset{c}\sim}}\, B   \right\}.
\end{equation*}

Basic properties of the hyperbolic metric $\delta$ were considered
in \cite{Po-hyperbolic}. Now we establish new connections with
multivariable operator theory.

\begin{lemma}
\label{OMr}
  Let   $A:=[A_1,\ldots, A_n]$ and $B:=[B_1,\ldots,
B_n]$ be   in $[B(\cH)^n]_1^-$. Then $A\overset{H}{\sim}\, B$ if and
only if the operator $L_{B,A}$ is invertible. In this case,
$L_{B,A}^{-1}=L_{A,B}$ and
$$
\omega(A,B)= \max\left\{ \|L_{A,B}\|, \|L_{B,A}\|\right\}.
$$
\end{lemma}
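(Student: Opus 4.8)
The statement has three parts: (a) $A\overset{H}{\sim} B$ iff $L_{B,A}$ is invertible; (b) when invertible, $L_{B,A}^{-1}=L_{A,B}$; and (c) $\omega(A,B)=\max\{\|L_{A,B}\|,\|L_{B,A}\|\}$. First I would handle (a). If $A\overset{H}{\sim} B$, then in particular $A\overset{H}{\prec} B$, so by Theorem~\ref{equivalent2} the operator $L_{B,A}\in B(\cK_B,\cK_A)$ is defined and satisfies $L_{B,A}|_\cH=I_\cH$ and $L_{B,A}W_i=V_iL_{B,A}$; by symmetry $B\overset{H}{\prec} A$ gives $L_{A,B}\in B(\cK_A,\cK_B)$ with $L_{A,B}|_\cH=I_\cH$ and $L_{A,B}V_i=W_iL_{A,B}$. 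Alternatively, and more directly, since $A\overset{H}{{\underset{c}\sim}} B$ for some $c$, Theorem~\ref{Harnack}(iv) says $L_{B,A}$ is invertible with $\|L_{B,A}\|\le c$ and $\|L_{B,A}^{-1}\|\le c$. Conversely, if $L_{B,A}$ is invertible, set $c:=\max\{\|L_{B,A}\|,\|L_{B,A}^{-1}\|\}\ge 1$; then $\|L_{B,A}\|\le c$ and $\|L_{B,A}^{-1}\|\le c$, so Theorem~\ref{Harnack}(iv)$\Rightarrow$(i) gives $A\overset{H}{{\underset{c}\sim}} B$, hence $A\overset{H}{\sim} B$.

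For (b): assume $A\overset{H}{\sim} B$ and consider the two intertwiners $L_{B,A}:\cK_B\to\cK_A$ and $L_{A,B}:\cK_A\to\cK_B$ built via \eqref{LBA}. The composition $L_{A,B}L_{B,A}:\cK_B\to\cK_B$ fixes $\cH$ pointwise (both maps restrict to the identity on $\cH$) and intertwines $W$ with itself: $(L_{A,B}L_{B,A})W_i=L_{A,B}V_iL_{B,A}=W_i(L_{A,B}L_{B,A})$. Evaluating on the dense set of vectors $\sum_{|\alpha|\le m}W_\alpha h_\alpha$ (minimality of the dilation $W$), we get $(L_{A,B}L_{B,A})\big(\sum W_\alpha h_\alpha\big)=\sum W_\alpha h_\alpha$, so $L_{A,B}L_{B,A}=I_{\cK_B}$; symmetrically $L_{B,A}L_{A,B}=I_{\cK_A}$. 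Hence $L_{B,A}$ is invertible with inverse $L_{A,B}$, which simultaneously re-proves the forward direction of (a).

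For (c): by Corollary~\ref{LBA-inf}, $\|L_{B,A}\|=\inf\{c\ge 1:A\overset{H}{{\underset{c}\prec}} B\}$ and $\|L_{A,B}\|=\inf\{c\ge 1:B\overset{H}{{\underset{c}\prec}} A\}$. Since $A\overset{H}{{\underset{c}\sim}} B$ means exactly $A\overset{H}{{\underset{c}\prec}} B$ and $B\overset{H}{{\underset{c}\prec}} A$, the infimum $\omega(A,B)=\inf\{c\ge1:A\overset{H}{{\underset{c}\sim}} B\}$ equals $\max$ of the two separate infima — here I would just note that the set of admissible $c$ for $\overset{H}{{\underset{c}\sim}}$ is the intersection of the two admissible-$c$ sets, each of which is an interval of the form $[\gamma,\infty)$ (closed because, e.g., Theorem~\ref{equivalent2} produces an $L_{B,A}$ with $\|L_{B,A}\|\le c$ for each such $c$, and one can also invoke the already-established identity $\|L_{B,A}\|=\inf\{\dots\}$ together with Corollary~\ref{Lr}-type closedness), so the intersection is $[\max\{\gamma_1,\gamma_2\},\infty)$ and its infimum is $\max\{\|L_{B,A}\|,\|L_{A,B}\|\}$.

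**Main obstacle.** The only genuine subtlety is the interchange of infimum and max in (c), i.e.\ making sure the infimum defining $\omega(A,B)$ is actually attained (or at least that the two one-sided infima are attained) so that ``$\inf$ of an intersection $=$ $\max$ of the two $\inf$s'' is legitimate. This is where Theorem~\ref{equivalent2} is doing the real work: it shows that if $A\overset{H}{\prec}B$ with some constant, then taking $c=\|L_{B,A}\|$ still works, so $A\overset{H}{{\underset{\|L_{B,A}\|}\prec}}B$ — the infimum is a minimum. Given that, both one-sided infima are minima, their admissible sets are $[\|L_{B,A}\|,\infty)$ and $[\|L_{A,B}\|,\infty)$, and the conclusion is immediate. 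Everything else is a routine dense-set computation.
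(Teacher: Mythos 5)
Your proposal is correct and follows essentially the same route as the paper's proof. The paper dispatches the first part (including $L_{B,A}^{-1}=L_{A,B}$) in one sentence as ``a simple consequence of Theorem~\ref{equivalent2},'' and then proves the formula for $\omega(A,B)$ by exactly the two-sided argument you give: $\max\{\|L_{A,B}\|,\|L_{B,A}\|\}\le\omega(A,B)$ from the forward direction of Theorem~\ref{equivalent2}, and $\omega(A,B)\le\max\{c_0,c_0'\}$ by taking $c_0=\|L_{B,A}\|$, $c_0'=\|L_{A,B}\|$ and applying the converse direction. Your explicit verification that $L_{A,B}L_{B,A}$ is the identity on the dense set $\bigvee_{\alpha}W_\alpha\cH$ (via the intertwining relations and $L|_\cH=I_\cH$) is precisely the detail the paper elides, and your ``admissible sets are closed half-lines'' phrasing for part (c) is an equivalent repackaging of the paper's two inequalities. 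No gap.
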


\begin{proof} The first part of this lemma is a simple consequence
of Theorem \ref{equivalent2}. To prove the last part, assume that
$A\overset{H}{{\underset{c}\sim}}\, B $ for some $c\geq 1$. Due to
Theorem \ref{equivalent2}, we have $\|L_{B,A}\|\leq c$ and
$\|L_{A,B}\|\leq c$. Consequently,
\begin{equation}\label{Max}
\max\left\{ \|L_{A,B}\|, \|L_{B,A}\|\right\}\leq \inf\left\{ c\geq
1: \ A\overset{H}{{\underset{c}\sim}}\, B   \right\}=\omega(A,B).
\end{equation}

On the other hand, set $c_0:=\|L_{B,A}\|$ and $c_0':=\|L_{A,B}\|$.
Using again Theorem \ref{equivalent2}, we deduce that
$A\overset{H}{{\underset{c_0}\prec}}\, B$ and
$B\overset{H}{{\underset{c_0'}\prec}}\, A$. Hence, we deduce that
$A\overset{H}{{\underset{d}\sim}}\, B $, where
$d:=\max\{c_0,c_0'\}$. Consequently, $\omega(A,B)\leq d$, which
together with relation \eqref{Max} imply $\omega(A,B)= \max\left\{
\|L_{A,B}\|, \|L_{B,A}\|\right\}$. This completes the proof.
\end{proof}

Using Lemma \ref{OMr}, we can express the hyperbolic metric in terms
of the operator $L_{A,B}$.

\begin{theorem}\label{formula}
Let   $A:=[A_1,\ldots, A_n]$ and $B:=[B_1,\ldots, B_n]$ be   in
$[B(\cH)^n]_1^-$ such that $A\overset{H}{\sim}\, B$. Then
  the metric $\delta$ satisfies the relation
 \begin{equation*}
\begin{split}
\delta(A,B)&=
  \ln \max \left\{ \left\| L_{A,B} \right\|,
  \left\| L_{A,B}^{-1}\right\|\right\}.
\end{split}
\end{equation*}
\end{theorem}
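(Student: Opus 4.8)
The plan is to combine the two facts already established just above: the definition $\delta(A,B):=\ln\omega(A,B)$ from \eqref{hyperbolic}, and the identity $\omega(A,B)=\max\{\|L_{A,B}\|,\|L_{B,A}\|\}$ together with $L_{B,A}^{-1}=L_{A,B}$ from Lemma \ref{OMr}. So the entire argument is essentially a substitution, and the only content is rewriting $\|L_{B,A}\|$ in terms of $L_{A,B}$.

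First I would invoke Lemma \ref{OMr}: since $A\overset{H}{\sim}\, B$, the operator $L_{B,A}$ is invertible, $L_{A,B}$ is its inverse, and $\omega(A,B)=\max\{\|L_{A,B}\|,\|L_{B,A}\|\}$. Then I would simply note that $\|L_{B,A}\| = \|L_{A,B}^{-1}\|$, so that $\max\{\|L_{A,B}\|,\|L_{B,A}\|\} = \max\{\|L_{A,B}\|,\|L_{A,B}^{-1}\|\}$. Finally, applying the definition \eqref{hyperbolic}, namely $\delta(A,B)=\ln\omega(A,B)$, yields
\[
\delta(A,B)=\ln\max\left\{\|L_{A,B}\|,\|L_{A,B}^{-1}\|\right\},
\]
which is the claimed relation.

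There is no real obstacle here — the theorem is a corollary of Lemma \ref{OMr} plus the definition of $\delta$, and the symmetry of the expression (it could equally be written with $L_{B,A}$ in place of $L_{A,B}$, since $\|L_{A,B}^{-1}\| = \|L_{B,A}\|$ and $\|L_{A,B}\| = \|L_{B,A}^{-1}\|$) is worth remarking on but requires nothing beyond what Lemma \ref{OMr} already gives. The proof is therefore two lines: cite Lemma \ref{OMr} for the value of $\omega(A,B)$, rewrite $\|L_{B,A}\|$ as $\|L_{A,B}^{-1}\|$, and take logarithms using \eqref{hyperbolic}.
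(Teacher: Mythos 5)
Your proposal is correct and is essentially the paper's own argument: Theorem \ref{formula} is stated immediately after Lemma \ref{OMr} with no separate proof, precisely because it follows by substituting the conclusions of that lemma (that $L_{B,A}$ is invertible with $L_{B,A}^{-1}=L_{A,B}$, hence $\|L_{B,A}\|=\|L_{A,B}^{-1}\|$, and that $\omega(A,B)=\max\{\|L_{A,B}\|,\|L_{B,A}\|\}$) into the definition $\delta(A,B)=\ln\omega(A,B)$ from \eqref{hyperbolic}.
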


In \cite{Po-automorphism},  we determined the group
$Aut([B(\cH)^n]_1)$  of all the free holomorphic automorphisms of
the noncommutative ball $[B(\cH)^n]_1$ and showed that if $\Psi\in
Aut([B(\cH)^n]_1)$ and $\lambda:=\Psi^{-1}(0)$, then there is a
unitary operator $U$ on $\CC^n$ such that
$$
\Psi=\Phi_U\circ \Psi_\lambda,
$$
where \begin{equation*}
  \Phi_U(X_1,\ldots
X_n):=[X_1\cdots  X_n]U , \qquad (X_1,\ldots, X_n)\in  [B(\cH)^n]_1,
\end{equation*}
and
\begin{equation*}
 \Psi_\lambda=- \Theta_\lambda(X_1,\ldots, X_n):={
\lambda}-\Delta_{ \lambda}\left(I_\cK-\sum_{i=1}^n \bar{{
\lambda}}_i X_i\right)^{-1} [X_1\cdots X_n] \Delta_{{\lambda}^*},
\end{equation*}
for some $\lambda=(\lambda_1,\ldots, \lambda_n)\in \BB_n$, where
$\Theta_\lambda$ is the characteristic function  of the row
contraction $\lambda$, and $\Delta_{ \lambda}$,
$\Delta_{{\lambda}^*}$ are certain defect operators.

 Let   $A:=(A_1,\ldots, A_n)$ and $B:=(B_1,\ldots, B_n)$ be in
$[B(\cH)^n]_1^-$.  We showed (see Lemma 2.3 from
\cite{Po-automorphism}) that
\begin{equation}
\label{equi2} A\overset{H}{{\underset{c}\prec}}\, B\quad \text{ if
and only if }\quad \Psi(A_1,\ldots,
A_n)\overset{H}{{\underset{c}\prec}}\, \Psi(B_1,\ldots B_n)
\end{equation}
for any $\Psi\in Aut([B(\cH)^n]_1)$. Using this result, we can prove
the following  proposition concerning the invariance of the norm of
$L_{B,A}$ under the automorphism group $Aut([B(\cH)^n]_1)$.

\begin{proposition}\label{inv}
Let   $A:=[A_1,\ldots, A_n]$ and $B:=[B_1,\ldots, B_n]$ be   in
$[B(\cH)^n]_1^-$ such that  $A\overset{H}{{\prec}}\, B$. If $\Psi\in
Aut([B(\cH)^n]_1)$, then
$$
\|L_{B,A}\|=\|L_{\Psi(B),\Psi(A)}\|.
$$
\end{proposition}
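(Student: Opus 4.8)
The plan is to reduce the identity to a characterization of $\|L_{B,A}\|$ that manifestly only depends on Harnack-domination data, and then invoke the automorphism invariance of Harnack domination, relation \eqref{equi2}. By Corollary \ref{LBA-inf}, whenever $A\overset{H}{\prec} B$ we have
$$
\|L_{B,A}\|=\inf\{c\geq 1:\ A\overset{H}{{\underset{c}\prec}}\, B\}.
$$
So the proof amounts to the following: first, $A\overset{H}{\prec} B$ together with \eqref{equi2} gives $\Psi(A)\overset{H}{\prec}\Psi(B)$, so that $L_{\Psi(B),\Psi(A)}$ is defined and Corollary \ref{LBA-inf} applies to it as well; second, for every $c\geq 1$, \eqref{equi2} says $A\overset{H}{{\underset{c}\prec}}\, B$ if and only if $\Psi(A)\overset{H}{{\underset{c}\prec}}\,\Psi(B)$, so the two infima defining $\|L_{B,A}\|$ and $\|L_{\Psi(B),\Psi(A)}\|$ are taken over the same set of admissible constants $c$, whence they are equal.

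Concretely, the steps I would carry out are: (1) observe that $A\overset{H}{\prec} B$ implies, via \eqref{equi2} applied to $\Psi$, that $\Psi(A)\overset{H}{\prec}\Psi(B)$, so both intertwining operators in the statement are well-defined bounded operators with norm $\geq 1$; (2) apply Corollary \ref{LBA-inf} to both pairs $(A,B)$ and $(\Psi(A),\Psi(B))$ to write each norm as an infimum of Harnack-domination constants; (3) use the "if and only if" in \eqref{equi2} to identify the two sets $\{c\geq 1: A\overset{H}{{\underset{c}\prec}}\, B\}$ and $\{c\geq 1: \Psi(A)\overset{H}{{\underset{c}\prec}}\,\Psi(B)\}$; (4) conclude that their infima coincide, i.e. $\|L_{B,A}\|=\|L_{\Psi(B),\Psi(A)}\|$.

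There is essentially no analytic obstacle here: the whole content has been front-loaded into Corollary \ref{LBA-inf} (the formula expressing $\|L_{B,A}\|$ as an infimum of domination constants, which itself rests on Theorem \ref{equivalent2}) and into Lemma 2.3 of \cite{Po-automorphism} (the automorphism invariance of Harnack domination, recorded as \eqref{equi2}). The only point requiring a moment's care is that one must first know the infimum is over a \emph{nonempty} set of constants, so that Corollary \ref{LBA-inf} genuinely applies to $(\Psi(A),\Psi(B))$; this is exactly why the hypothesis $A\overset{H}{\prec} B$ is imposed and why \eqref{equi2} is invoked once at the outset to transport the domination across $\Psi$. After that, the argument is a one-line comparison of two infima over identical index sets.

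If one wished to avoid quoting Corollary \ref{LBA-inf} and argue more directly, an alternative would be to note that $c\mapsto$ (existence of an intertwiner of norm $\leq c$ as in Theorem \ref{equivalent2}(ii)) is precisely the condition $A\overset{H}{{\underset{c}\prec}}\, B$, and $\|L_{B,A}\|$ is the least such $c$; then \eqref{equi2} again gives the equality of the least admissible constants for $(A,B)$ and $(\Psi(A),\Psi(B))$. Either way the proof is short, and I expect the write-up to be only a few lines.
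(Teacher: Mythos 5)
Your proof is correct and takes essentially the same approach as the paper: both reduce to the combination of Corollary \ref{LBA-inf} (equivalently, the intertwiner characterization in Theorem \ref{equivalent2}) and the automorphism-invariance of Harnack domination recorded in \eqref{equi2}. The only cosmetic difference is that you compare the two sets of admissible constants directly via the ``if and only if'' in \eqref{equi2}, whereas the paper proves $\|L_{\Psi(B),\Psi(A)}\|\le\|L_{B,A}\|$ and then gets the reverse inequality by replacing $\Psi$ with its inverse; the substance is identical.
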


\begin{proof}
Let $c_0:=\|L_{B,A}\|$. According to Theorem \ref{equivalent2}, we
have  $ A\overset{H}{{\underset{c_0}\prec}}B$.  Consequently,
relation \eqref{equi2} implies
 $\Psi(A_1,\ldots,
A_n)\overset{H}{{\underset{c_0}\prec}}\, \Psi(B_1,\ldots B_n)$.
Using again Theorem \ref{equivalent2}, we deduce that
\begin{equation}
\label{L<L} \|L_{\Psi(B),\Psi(A)}\|\leq c_0=\|L_{B,A}\|.
\end{equation}
Since $\Psi\circ \Psi=id$, we  can apply  relation \eqref{L<L} when
$A$ and $B$ are replaced by $\Psi(A)$ and $\Psi(B)$, respectively,
and obtain $\|L_{B,A}\|\leq \|L_{\Psi(B),\Psi(A)}\|$. This completes
the proof.
\end{proof}

In what follows we calculate the norm of $L_{B,A}$ in terms of the
reconstruction operators.

\begin{theorem}\label{LCC}
If $A,B\in [B(\cH)^n]_1$, then
$$
\|L_{B,A}\|=\|C_AC_B^{-1}\|,
$$
where $C_X:=( \Delta_X\otimes I)(I-R_X)^{-1}$ and $R_X:=X_1^*\otimes
R_1+\cdots + X_n^*\otimes R_n$ is the reconstruction operator
associated with  the $n$-tuple $X:=(X_1,\ldots, X_n)\in
[B(\cH)^n]_1$. Moreover, if $A,B\in [B(\cH)^n]_1^-$ is such that
$A\overset{H}{\prec}\, B$, then
$$
\|L_{B,A}\|=\sup_{r\in[0,1)}\|C_{rA}C_{rB}^{-1}\|.
$$
\end{theorem}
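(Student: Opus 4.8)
The plan is to identify the operator $L_{B,A}$ concretely when $A,B\in [B(\cH)^n]_1$ are strict row contractions, using the fact that in this case both $A$ and $B$ are pure, so their minimal isometric dilations are (up to unitary equivalence) the standard models on $F^2(H_n)\otimes\cD_A$ and $F^2(H_n)\otimes\cD_B$, and the embedding of $\cH$ is given by the Poisson kernel $K_A$ (resp.~$K_B$), which is an isometry in the pure case. Concretely, $K_X h=\sum_\alpha \Delta_X X_\alpha^* h\otimes e_\alpha$, and one computes that $K_X$ intertwines $X_i^*$ with $S_i^*\otimes I$, hence $\cK_X=[S_1\otimes I_{\cD_X},\dots,S_n\otimes I_{\cD_X}]$ dilates $X$ with $\cH$ sitting inside via $K_X$. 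The operator $C_X=(\Delta_X\otimes I)(I-R_X)^{-1}$ is precisely the ``transpose'' form of this kernel: writing $R_X=\sum X_i^*\otimes R_i$ and expanding the Neumann series $(I-R_X)^{-1}=\sum_\alpha X_{\tilde\alpha}^*\otimes R_\alpha$ (here one must track the reversal $\tilde\alpha$ carefully, since the $R_i$ have orthogonal ranges and $R_\alpha e_{g_0}=e_{\tilde\alpha}$), one sees $C_X$ maps $\cH$ into $\cD_X\otimes F^2(H_n)$ and realizes the Poisson kernel after the canonical flip $F^2(H_n)\otimes\cD_X\cong\cD_X\otimes F^2(H_n)$ that swaps $S_i$ with $R_i$.

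The key step is then: since $L_{B,A}$ is the unique operator with $L_{B,A}|_\cH=I_\cH$ and $L_{B,A}W_i=V_iL_{B,A}$, and since in the pure case $\cK_B=\bigvee_\alpha W_\alpha\cH$ with $W_\alpha h$ corresponding under the Fourier transform to the $R$-side action, the intertwining relation forces $L_{B,A}$ to be (the flip of) a right-multi-analytic operator composed with the two Poisson embeddings. Tracking $\cH$ through $C_B$ into $\cD_B\otimes F^2(H_n)$, then needing to land in $\cD_A\otimes F^2(H_n)$ via $C_A$, one gets that on the dense subspace $C_B\cH$ the operator $L_{B,A}$ (transported by the flips) acts as $C_A C_B^{-1}$. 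Because $B$ is a strict row contraction, $C_B$ has closed range and $C_B^{-1}:C_B\cH\to\cH$ is bounded, and $C_B\cH$ corresponds under the Fourier transform to the generating subspace $\bigvee W_\alpha\cH$, so $\|L_{B,A}\|=\|C_AC_B^{-1}\|$ follows by comparing norms on this dense set. I would phrase this as: $L_{B,A}$ is unitarily equivalent (via the Fourier transforms $\Phi_V,\Phi_W$ of Theorem~\ref{analytic} and the identifications $\cL_V\cong\cD_A$, $\cL_W\cong\cD_B$) to the operator sending $C_B h\mapsto C_A h$.

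For the second assertion, when $A,B\in[B(\cH)^n]_1^-$ with $A\overset{H}{\prec}B$, I would invoke Corollary~\ref{Lr}, which gives $\|L_{B,A}\|=\sup_{r\in[0,1)}\|L_{rA,rB}\|$ (note the convention $\|L_{A,B}\|=\sup\|L_{rA,rB}\|$ there — one should double-check the subscript order against the statement, but the two are inverses so the sup of the max is what matters, and here only one domination direction is assumed, so $\|L_{B,A}\|=\sup_r\|L_{rA,rB}\|$). For each fixed $r\in[0,1)$, the $n$-tuples $rA$ and $rB$ lie in the open ball $[B(\cH)^n]_1$, so the first part applies to give $\|L_{rB,rA}\|=\|C_{rA}C_{rB}^{-1}\|$; combining yields $\|L_{B,A}\|=\sup_{r\in[0,1)}\|C_{rA}C_{rB}^{-1}\|$ as claimed.

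The main obstacle I anticipate is the bookkeeping in the first paragraph: correctly matching the left-creation model (in which the minimal isometric dilation and the kernel $K_X$ naturally live) with the right-creation reconstruction operator $R_X$ and its resolvent $C_X$, including the reversal $\alpha\mapsto\tilde\alpha$ and the canonical flip unitary $F^2(H_n)\otimes\cD\to\cD\otimes F^2(H_n)$ intertwining $S_i$ and $R_i$. Once the identification $C_X\cong K_X$ (up to this flip) is pinned down, together with the fact that $C_B$ has bounded inverse on its range precisely because $\Delta_B$ is invertible for a strict row contraction, the norm identity is immediate. A secondary point to handle cleanly is that $C_B\cH$ is dense in the relevant reducing subspace so that passing to the norm on this dense set computes $\|L_{B,A}\|$ exactly rather than just bounding it.
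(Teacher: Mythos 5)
Your proposal takes a genuinely different route from the paper, and the underlying idea is correct and even a bit stronger than what the paper proves. The paper does not identify $L_{B,A}$ with anything concrete: it combines the equivalence
$P(A,R)\leq c^2P(B,R)\Leftrightarrow\|L_{B,A}\|\leq c$ (from relation \eqref{equi} and Theorem~\ref{equivalent2}) with the factorization $P(X,R)=C_X^*C_X$, so that $P(A,R)\leq c^2P(B,R)$ becomes $({C_B^*})^{-1}C_A^*C_AC_B^{-1}\leq c^2I$, i.e.\ $\|C_AC_B^{-1}\|\leq c$; the norm identity then drops out of two infima. Your approach instead identifies $L_{B,A}$, under the functional model of the minimal isometric dilation, with the operator $C_AC_B^{-1}$ itself (up to the unitaries you describe). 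That buys more — the exact operator, not just its norm — at the cost of the model-theoretic bookkeeping you flag.

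There is, however, one genuine imprecision in the denseness step, and it is worth being exact about because as written the argument would only give a lower bound on $\|L_{B,A}\|$, not equality. The set you call $C_B\cH$ — which in context is $C_B(\cH\otimes\CC 1)=K_B\cH$, the Poisson embedding of $\cH$ — is \emph{not} dense in $\cH\otimes F^2(H_n)$; it is a proper closed subspace (the copy of $\cH$ inside $\cK_B$), and it does not ``correspond to $\bigvee_\alpha W_\alpha\cH$'' under the Fourier transform — it corresponds to $\cH$ alone. Comparing norms of $L_{B,A}$ and $C_AC_B^{-1}$ on that subspace gives only $\geq$ for a restriction, not $\|L_{B,A}\|=\|C_AC_B^{-1}\|$. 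The missing step (which you are clearly reaching for, but do not state) is that \emph{both} operators intertwine the dilating isometries: $L_{B,A}W_i=V_iL_{B,A}$ by construction, while $C_AC_B^{-1}$ commutes with $I\otimes S_i$ because $R_X=\sum X_i^*\otimes R_i$ commutes with $I\otimes S_i$ (left and right creations commute) and hence so do $(I-R_X)^{-1}$ and $C_X$. Since they also agree on $K_B\cH$ (as you note, $C_AC_B^{-1}K_Bh=C_A(h\otimes 1)=K_Ah$, which is what $L_{B,A}|_\cH=I_\cH$ says), they agree on $\bigvee_\alpha(I\otimes S_\alpha)K_B\cH$. That span is dense — in fact all of $\cH\otimes F^2(H_n)$ by minimality / purity of a strict row contraction — so $L_{B,A}=C_AC_B^{-1}$ as operators, and the norm equality follows. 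With that correction your argument closes; the passage to $A,B\in[B(\cH)^n]_1^-$ via Corollary~\ref{Lr} is exactly what the paper does, and your handling of the subscript convention there is fine.
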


\begin{proof} Since $A,B\in [B(\cH)^n]_1$,   Theorem 1.6 from \cite{Po-hyperbolic}, implies
$A\overset{H}{\sim}\, B$.  Let  $c\geq 1$ and  assume that
$P(rA,R)\leq c^2 P(rB,R)$ for any $r\in[0,1)$. Since $\|A\|<1$ and
$\|B\|<1$, we can take the limit, as $r\to 1$, in the operator norm
topology, and obtain
 $ P(A,R)\leq c^2 P(B,R)$. Conversely, if the latter inequality
 holds, then $P(A,S)\leq c^2 P(B,S)$,
where $S:=(S_1,\ldots, S_n)$ is  the $n$-tuple of left creation
operators.  Applying the noncommutative Poisson transform $\text{\rm
id}\otimes P_{rR }$, $r\in [0,1)$, and taking into account that it
is a positive map, we deduce that $  P(rA,R)\leq c^2 P(rB,R) $ for
any $r\in [0,1)$.

 Now, combining \eqref{equi} with Theorem
\ref{equivalent2},  we deduce that
\begin{equation}
\label{PPL}
 P(A,R)\leq c^2 P(B,R) \quad \text{ if and only if }
\quad \|L_{B,A}\|\leq c.
\end{equation}
 We
recall that the free pluriharmonic  kernel $P(X,R)$,
$X:=(X_1,\ldots, X_n)\in [B(\cH)^n]_1$, has the factorization $P(X,
R)=C_X^* C_X$, where $C_X:=(  \Delta_X\otimes I)(I-R_X)^{-1}$ and
$R_X:=X_1^*\otimes R_1+\cdots + X_n^*\otimes R_n$ is the
reconstruction operator. Note also that $C_X$ is an invertible
operator. Consequently,
$$P(A,R)\leq c^2 P(B,R)\quad \text{  if and only if }\quad
{C_B^*}^{-1} C_A^*C_AC_B^{-1}\leq c^2I.
$$
Setting $c_0:=\|C_AC_B^{-1}\|$, we have $P(A,R)\leq c_0^2 P(B,R)$.
Now, due to relation \eqref{PPL}, we deduce that
$$\|L_{B,A}\|\leq c_0=\|C_AC_B^{-1}\|.
$$
Setting  $c_0':=\|L_{B,A}\|$ and using again \eqref{PPL}, we obtain
$P(A,R)\leq {c_0'}^2 P(B,R)$. Hence, we deduce that ${C_B^*}^{-1}
C_A^*C_AC_B^{-1}\leq {c_0'}^2I$, which implies
$$\|C_AC_B^{-1}\|\leq
c_0'=\|L_{B,A}\|.
$$
Therefore, $\|L_{B,A}\|=\|C_AC_B^{-1}\|$. Using the first part of
this   theorem  and  Corollary \ref{Lr}, we complete the proof.
\end{proof}

We remark that, due to Theorem \ref{LCC} and Corollary
\ref{LBA-inf},   we have
\begin{equation}
\label{CACB}\|L_{B,A}\|=\|C_AC_B^{-1}\|
 =\inf\{c\geq 1:\  P( A, R)\leq c^2 P(B, R) \}
\end{equation}
for any $A,B\in [B(\cH)^n]_1$.

 Now, let us consider the
particular case when $n=1$.
 Due to the remarks above and  using
Proposition \ref{n=1} and Corollary \ref{LBA-inf}, we deduce  the
following. Let $ T$ and $ T'$ be in $[B(\cH)]_1^-$ such that
$T\overset{H}{\prec}\, T'$. Then
\begin{equation*}
\begin{split}
\|L_{T',T}\|&=\inf \{ c\geq 1: \ Q(rT,U)\leq c^2 Q(rT', U)\ \text{
for any } \ r\in
[0,1)\}\\
&=
 \inf\{c\geq 1: \ K(z,T)\leq c^2K(z,T')\ \text{ for
any } \ z\in \DD\}\\
&=\inf\{c\geq 1: \ K(z,T^*)\leq c^2K(z,{T'}^*)\ \text{  for
any } \ z\in \DD\}\\
&=\|L_{{T'}^*,T^*}\|
\end{split}
\end{equation*}
Therefore $T\overset{H}{\prec}\, T'$ if and only if
$T^*\overset{H}{\prec}\, {T'}^*$. In particular, if $T,T'\in
[B(\cH)]_1$, relation \eqref{CACB} implies
\begin{equation}\label{LTT'}
\begin{split}
\|L_{T',T}\|&=\|L_{{T'}^*, T^*}\|=\|C_{T^*} C_{{T'}^*}^{-1}\|\\
&= \sup_{e^{it}\in \TT}\|(I-T^*T)^{1/2} (I-e^{it}T)^{-1}(I-e^{it}
T')(I-{T'}^* T')^{-1/2}\|\\
&=\sup_{e^{it}\in \TT}\|(I-{T'}^* T')^{-1/2}(I-e^{it}
{T'}^*)(I-e^{it}T^*)^{-1}(I-T^*T)^{1/2}\|.
\end{split}
\end{equation}
Therefore, if $ T$ and $ T'$ are in $[B(\cH)]_1$ such that
$T\overset{H}{\prec}\, T'$, then
\begin{equation}
\label{Suciu} \|L_{T',T}\|=\sup_{e^{it}\in \TT}\|(I-{T'}^*
T')^{-1/2}(I-e^{it} {T'}^*)(I-e^{it}T^*)^{-1}(I-T^*T)^{1/2}\|,
\end{equation}
which is a result obtained by Suciu \cite{Su2} using different
methods.

Using Theorem \ref{LCC} and  relation \eqref{Suciu}, we can deduce
the following result, which is needed in the next section.

\begin{proposition}
\label{connection} If $A,B\in [B(\cH)^n]_1$, then
$$
\|L_{B,A}\|=\| L_{R_B, R_A}\|= \sup_{e^{it}\in \TT} \left\|(I-R_B
R_B^*)^{-1/2} (I-e^{it} R_B)(I-e^{it} R_A)^{-1} (I-R_A
R_A^*)^{1/2}\right\|
$$
where   $R_X:=X_1^*\otimes R_1+\cdots + X_n^*\otimes R_n$ is the
reconstruction operator.
\end{proposition}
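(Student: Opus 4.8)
The plan is to reduce to the single-operator situation recorded in relation \eqref{LTT'} by passing through the reconstruction operator, and then read off the formula. First I would note the elementary fact that $R_X=X_1^*\otimes R_1+\cdots+X_n^*\otimes R_n$ is a \emph{single} operator on $\cH\otimes F^2(H_n)$ with, since $R_i^*R_j=\delta_{ij}I$,
\[
R_X^*R_X=\left(\sum_{i=1}^n X_iX_i^*\right)\otimes I,\qquad\text{so}\qquad \|R_X\|=\left\|\sum_{i=1}^n X_iX_i^*\right\|^{1/2}<1
\]
whenever $X\in[B(\cH)^n]_1$. Hence $R_A$, $R_B$ and their adjoints $R_A^*$, $R_B^*$ all lie in the open ball $[B(\cH\otimes F^2(H_n))]_1$, which is exactly the setting in which \eqref{LTT'} is available.

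For the first equality $\|L_{B,A}\|=\|L_{R_B,R_A}\|$ I would argue as follows. Since $A,B\in[B(\cH)^n]_1$, Theorem 1.6 of \cite{Po-hyperbolic} (as used in the proof of Theorem \ref{LCC}) gives $A\overset{H}{\sim}B$, so $A\overset{H}{\prec}B$ and, by Theorem \ref{equivalent4}, also $R_A\overset{H}{\prec}R_B$. Thus Corollary \ref{LBA-inf} applies to both pairs and expresses $\|L_{B,A}\|$ as $\inf\{c\geq1:A\overset{H}{{\underset{c}\prec}}B\}$ and $\|L_{R_B,R_A}\|$ as $\inf\{c\geq1:R_A\overset{H}{{\underset{c}\prec}}R_B\}$. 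By Theorem \ref{equivalent4} these two conditions are equivalent for every $c>0$, so the sets over which the infima are taken coincide, whence $\|L_{B,A}\|=\|L_{R_B,R_A}\|$.

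For the second equality I would apply \eqref{LTT'} twice. Its first identity, with $T:=R_A$ and $T':=R_B$, gives $\|L_{R_B,R_A}\|=\|L_{R_B^*,R_A^*}\|$; its last identity, with $T:=R_A^*$ and $T':=R_B^*$ (permissible by the preliminary observation), then yields
\[
\|L_{R_B^*,R_A^*}\|=\sup_{e^{it}\in\TT}\left\|(I-R_BR_B^*)^{-1/2}(I-e^{it}R_B)(I-e^{it}R_A)^{-1}(I-R_AR_A^*)^{1/2}\right\|,
\]
since $(R_B^*)^*(R_B^*)=R_BR_B^*$ and $(R_A^*)^*(R_A^*)=R_AR_A^*$. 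This is exactly the right-hand side in the proposition, so chaining the two steps finishes the proof.

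I do not expect a genuine obstacle here: everything rests on Theorem \ref{equivalent4}, Corollary \ref{LBA-inf}, and \eqref{LTT'}. The only point deserving care is that \eqref{LTT'} must be applied to the \emph{adjoints} $R_A^*$, $R_B^*$ rather than to $R_A$, $R_B$ directly --- this is what makes the defect operators $I-R_AR_A^*$ and $I-R_BR_B^*$ (rather than $I-R_A^*R_A=(I-\sum_i A_iA_i^*)\otimes I$) appear in the final formula --- together with the routine check, carried out in the preliminary observation, that these adjoints again lie in the open unit ball so that \eqref{LTT'} genuinely applies there.
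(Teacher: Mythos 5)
Your proof is correct, and the second equality is handled exactly as in the paper: first use \eqref{LTT'} (with $T=R_A$, $T'=R_B$) to pass from $\|L_{R_B,R_A}\|$ to $\|L_{R_B^*,R_A^*}\|$, then apply \eqref{Suciu} (equivalently the last line of \eqref{LTT'}) to $R_A^*$, $R_B^*$; the preliminary check that $R_A^*$, $R_B^*$ lie in the open unit ball is the right point to flag.

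Where you diverge from the paper is in the first equality $\|L_{B,A}\|=\|L_{R_B,R_A}\|$. The paper gets it ``hard'': it invokes Theorem \ref{LCC} to write $\|L_{B,A}\|=\|C_AC_B^{-1}\|=\|(\Delta_A\otimes I)(I-R_A)^{-1}(I-R_B)(\Delta_B^{-1}\otimes I)\|$, identifies $\Delta_X\otimes I$ with $(I-R_X^*R_X)^{1/2}$, uses the noncommutative von Neumann inequality (gauge invariance of the $R_i$) to rewrite this as a supremum over $e^{it}\in\TT$, and recognizes the resulting expression as the formula \eqref{LTT'} gives for $\|L_{R_B,R_A}\|$. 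You obtain the same equality ``softly'': Theorem \ref{equivalent4} identifies, for every $c$, the two Harnack-domination conditions $A\overset{H}{{\underset{c}\prec}}B$ and $R_A\overset{H}{{\underset{c}\prec}}R_B$, and Corollary \ref{LBA-inf} expresses both $\|L_{B,A}\|$ and $\|L_{R_B,R_A}\|$ as the infimum of admissible $c$'s, so the two norms must coincide. Your route bypasses the explicit $C_AC_B^{-1}$ formula and the von Neumann step entirely, at the (small) cost of not exhibiting the intermediate equality of concrete operator norms that the paper records in its displayed chain. Both are valid; yours is arguably cleaner as a proof of this proposition in isolation, while the paper's makes the link to Theorem \ref{LCC} explicit.
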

 \begin{proof} Notice first that $R_A$ and $R_B$  are strict contractions.
 According to Theorem \ref{LCC}, using the
 noncommutative von Neumann inequality (\cite{Po-von}), and relation \eqref{LTT'}, we have
 \begin{equation*}
 \begin{split}
\|L_{B,A}\|&=\|\Delta_A\otimes I)(I-R_A)^{-1}
(I-R_B)(\Delta_B^{-1}\otimes I)\|\\
&=\sup_{e^{it}\in \TT} \left\|(I-R_A^* R_A)^{1/2}(I-e^{it} R_A)^{-1}
(I-e^{it} R_B) (I-R_B^* R_B)^{-1/2}\right\|\\
&=\left\|L_{R_B, R_A}\right\|=\left\|L_{R_B^*, R_A^*}\right\|.
 \end{split}
 \end{equation*}
 Using now  relation  \eqref{Suciu}, we can complete the proof.
\end{proof}

Taking into account  Theorem \ref{formula}, Proposition \ref{inv},
and Theorem \ref{LCC},  one can deduce the following result from
\cite{Po-hyperbolic}.

\begin{corollary} If   $A,B \in [B(\cH)^n]_1^-$  and  $A\overset{H}{\sim}\,
B$, then
$$\delta(\Psi(A), \Psi(B))=\delta(A,B)
$$
  for any
$\Psi\in Aut([B(\cH)^n]_1)$. Moreover, in this case we have
\begin{equation*}
\begin{split}
\delta(A,B)=\ln \max \left\{\sup_{r\in [0,1)}\left\|C_{rA}
C_{rB}^{-1} \right\|, \sup_{r\in [0,1)} \left\|C_{rB} C_{rA}^{-1}
\right\|\right\},
\end{split}
\end{equation*}
 where $C_X:=( \Delta_X\otimes I)(I-R_X)^{-1}$ and
$R_X:=X_1^*\otimes R_1+\cdots + X_n^*\otimes R_n$ is the
reconstruction operator.
\end{corollary}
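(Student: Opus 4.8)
The plan is to assemble the corollary directly from Theorem \ref{formula}, Lemma \ref{OMr}, Proposition \ref{inv}, and Theorem \ref{LCC}, with no new computation. The starting observation is that, by Lemma \ref{OMr}, the hypothesis $A\overset{H}{\sim}B$ makes $L_{B,A}$ invertible with $L_{B,A}^{-1}=L_{A,B}$, so Theorem \ref{formula} can be rephrased as
$$\delta(A,B)=\ln\max\bigl\{\|L_{A,B}\|,\|L_{B,A}\|\bigr\},$$
and the same identity holds for any Harnack-equivalent pair in $[B(\cH)^n]_1^-$. Both assertions of the corollary thus become statements about the pair of numbers $\|L_{A,B}\|$, $\|L_{B,A}\|$.

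For the invariance under $Aut([B(\cH)^n]_1)$, I would first note that $A\overset{H}{\sim}B$ means simultaneously $A\overset{H}{\prec}B$ and $B\overset{H}{\prec}A$, so Proposition \ref{inv} applies both to the ordered pair $(A,B)$ and to $(B,A)$, giving $\|L_{B,A}\|=\|L_{\Psi(B),\Psi(A)}\|$ and $\|L_{A,B}\|=\|L_{\Psi(A),\Psi(B)}\|$ for every $\Psi\in Aut([B(\cH)^n]_1)$. Next, relation \eqref{equi2} shows $\Psi(A)\overset{H}{\sim}\Psi(B)$, so $\delta(\Psi(A),\Psi(B))$ is defined, and applying the displayed identity to this pair gives
$$\delta(\Psi(A),\Psi(B))=\ln\max\bigl\{\|L_{\Psi(A),\Psi(B)}\|,\|L_{\Psi(B),\Psi(A)}\|\bigr\}=\ln\max\bigl\{\|L_{A,B}\|,\|L_{B,A}\|\bigr\}=\delta(A,B).$$

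For the explicit formula, I would invoke the second part of Theorem \ref{LCC}: from $A\overset{H}{\prec}B$ it yields $\|L_{B,A}\|=\sup_{r\in[0,1)}\|C_{rA}C_{rB}^{-1}\|$, and from $B\overset{H}{\prec}A$ it yields $\|L_{A,B}\|=\sup_{r\in[0,1)}\|C_{rB}C_{rA}^{-1}\|$. Substituting these two identities into $\delta(A,B)=\ln\max\{\|L_{A,B}\|,\|L_{B,A}\|\}$ produces exactly the claimed expression. I do not expect any real obstacle here; the only point deserving a moment's care is that the hypotheses of Proposition \ref{inv} and of the second part of Theorem \ref{LCC} are formulated as one-sided Harnack domination, so each must be used in both orientations, which is legitimate precisely because $A\overset{H}{\sim}B$ encodes domination in both directions.
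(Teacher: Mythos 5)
Your argument is correct and uses precisely the ingredients the paper itself cites (Theorem \ref{formula}, Proposition \ref{inv}, Theorem \ref{LCC}, together with Lemma \ref{OMr} and relation \eqref{equi2}); the paper gives no further detail beyond that citation, so your proof matches the intended route. The only point worth making explicit, which you do handle, is that Proposition \ref{inv} and the second half of Theorem \ref{LCC} are stated for one-sided Harnack domination and must be invoked for both ordered pairs $(A,B)$ and $(B,A)$, which the hypothesis $A\overset{H}{\sim}B$ licenses.
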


\bigskip
\section{The geometric structure of the operator $L_{B,A}$}

This section  deals with  the geometric structure of the operator
$L_{B,A}$.  As consequences, we obtain new characterizations  for
the Harnack domination (resp.~equivalence) in $[B(\cH)^n]_1^-$.

 We need to recall from
\cite{Po-charact}--\cite{Po-analytic}
 a few facts
 concerning multi-analytic   operators on Fock spaces.
   We say that
 a bounded linear
  operator
$A$ acting from $F^2(H_n)\otimes \cK$ to $ F^2(H_n)\otimes \cG$ is
 multi-analytic
if
\begin{equation*}
A(S_i\otimes I_\cK)= (S_i\otimes I_{\cG}) A\quad \text{\rm for any
}\ i=1,\dots, n.
\end{equation*}
Note that $A$ is uniquely determined by the operator $\theta:\cK\to
F^2(H_n)\otimes \cG$, which is   defined by ~$\theta k:=A(1\otimes
k)$, \ $k\in \cK$,
 and
is called the  symbol  of  $A$.
We can associate with $A$ a unique formal Fourier expansion

\begin{equation*}
A\sim \sum_{\alpha \in \FF_n^+} R_\alpha \otimes \theta_{(\alpha)},
\end{equation*}
where $R_i$, \ $i=1,\ldots, n$, are the right creation operators on
$F^2(H_n)$, and  the coefficients
  $\theta_{(\alpha)}\in B(\cK, \cG)$,  are given by
 $$
\left< \theta_{(\tilde\alpha)}x,y\right>:=  \left< A(1\otimes x),
e_\alpha \otimes y\right>,\quad x\in \cK,\ y\in \cG,\ \alpha\in
\FF_n^+.
$$
Here $\tilde\alpha$ is the reverse of $\alpha$, i.e., $\tilde\alpha=
g_{i_k}\cdots g_{i_1}$ if $\alpha= g_{i_1}\cdots g_{i_k}$. Moreover,
in this case  we have
  $$A=\text{\rm SOT-}\lim_{r\to 1}\sum_{k=0}^\infty \sum_{|\alpha|=k}
   r^{|\alpha|} R_\alpha\otimes \theta_{(\alpha)},
   $$
   where, for each $r\in (0,1)$, the series converges in the uniform norm.
The set of  all multi-analytic operators in $B(F^2(H_n)\otimes \cK,
F^2(H_n)\otimes \cG)$  coincides  with $R_n^\infty\bar \otimes
B(\cK,\cG)$, the WOT-closed operator space  generated by the spatial
tensor product. A multi-analytic operator is called inner if it is
an isometry.

Let $T:=[T_1,\dots, T_n]$  be a row contraction, i.e., $
T_1T_1^*+\cdots +T_nT_n^*\leq I_\cH. $
We recall that the defect operators  associated with $T $ are given
by
\begin{equation*}
 \Delta_T:=\left( I_\cH-\sum_{i=1}^n
T_iT_i^*\right)^{1/2}\in B(\cH) \quad \text{ and }\quad
\Delta_{T^*}:= ([\delta_{ij}I_\cH-T_i^*T_j]_{n\times n})^{1/2}\in
B(\cH^{(n)}),
\end{equation*}
while the defect spaces are $\cD_T:=\overline{\Delta_T\cH}$ and
$\cD_{T^*}:=\overline{\Delta_{T^*}\cH^{(n)}}$, where $\cH^{(n)}$
denotes the direct sum of $n$ copies of $\cH$. For each $i=1,\ldots,
n$, let $D_{T^*,i}:\cH\to \CC\otimes \cD_{T^*}\subset
F^2(H_n)\otimes \cD_{T^*}$ be defined by
$$
D_{T^*,i}h:= 1\otimes
\Delta_{T^*}(\underbrace{0,\ldots,0}_{{i-1}\mbox{ \scriptsize
times}},h,0,\ldots).
$$
Consider the Hilbert space $\cK_T:=\cH\oplus [F^2(H_n)\otimes
\cD_{T^*}]$ and define $V_i:\cK_T\to\cK_T$, $i=1,\ldots, n$,  by
\begin{equation}\label{dil}
V_i(h\oplus \xi):= T_ih \oplus [D_{T^*,i}h +(S_i\otimes
I_{\cD_{T^*}})\xi]
\end{equation}
for any $h\in \cH$ and  $\xi\in F^2(H_n)\otimes\cD_{T^*}$. Notice
that
\begin{equation}\label{dilmatr}
V_i=\left[\begin{matrix} T_i& 0\\
D_{T^*,i}& S_i\otimes I_{\cD_{T^*}}
\end{matrix}\right]
\end{equation}
with respect to the decomposition $\cK_T=\cH\oplus [F^2(H_n)\otimes
\cD_{T^*}]$. It was proved in \cite{Po-isometric} that the $n$-tuple
 $V:=[V_1,\dots, V_n]$  is the   minimal isometric dilation of $T$.

The main result of this section is the following multivariable
generalization of Suciu's result from \cite{Su2}.

\begin{theorem}\label{geometric}
 Let   $A:=[A_1,\ldots, A_n]$ and $B:=[B_1,\ldots,
B_n]$  be    in $[B(\cH)^n]_1^-$ and let $V:=[V_1,\ldots, V_n]$ on
$\cK_A\supseteq \cH$ and $W:=[W_1,\ldots, W_n]$ on $\cK_B\supseteq
\cH$ be the minimal isometric dilations of $A$ and $B$,
respectively. Then
 $A\overset{H}{{\prec}}\, B$ if and only if  there exit two bounded linear operators
 $\Omega_0:\cH\to \cD_{B^*}$ and $\Theta_0:\cD_{A^*}\to \cD_{B^*}$ such that
 the following conditions
 are satisfied:

 \begin{enumerate}
 \item[(i)] $\Delta_{B^*} \Omega_0
 =\left[\begin{matrix} A_1^*-B_1^*\\ \vdots\\ A_n^*-B_n^*\end{matrix}\right];$

 \item[(ii)] the map $\Omega:\cH\to F^2(H_n)\otimes \cD_{B^*}$
 defined by
 \begin{equation*}
 \begin{split}
 \Omega h&:=(I_{F^2(H_n)}\otimes\Omega_0)
 \sum_{k=0}^\infty\left(\sum_{j=1}^n R_j\otimes A_j^*\right)^k(1\otimes h)
 =\sum_{\alpha\in \FF_n^+} e_\alpha \otimes \Omega_0 A_\alpha^*h
 \end{split}
 \end{equation*}
 is a bounded operator;

 \item[(iii)] $\Delta_{A^*}=\Delta_{B^*} \Theta_0$;

 \item[(iv)] the formal series
 $$
 I_{F^2(H_n)}\otimes \Theta_0+ (I_{F^2(H_n)}\otimes
 \Omega_0)\left(I-\sum_{j=1}^n R_j\otimes
 A_j^*\right)^{-1}[R_1\otimes I_\cH,\ldots, R_n\otimes
 I_\cH](I_{F^2(H_n)}\otimes \Delta_{A^*})
 $$
is the  Fourier representation of a multi-analytic operator
$\Theta:F^2(H_n)\otimes \cD_{A^*}\to F^2(H_n)\otimes \cD_{B^*}$.
 \end{enumerate}

In this case the operator $L_{B,A}:\cK_B\to \cK_A$ satisfying
$L_{B,A}|_\cH=I_\cH$ and $L_{B,A}W_i=V_i L_{B,A}$, $i=1,\ldots, n$,
is given by
\begin{equation}
\label{LBA*}
L_{B,A}^*=\left[\begin{matrix} I_\cH& 0\\
\Omega &  \Theta
\end{matrix}\right].
\end{equation}
\end{theorem}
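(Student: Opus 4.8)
The plan is to recognize that $L_{B,A}^*$, being an operator from $\cK_A = \cH \oplus [F^2(H_n)\otimes \cD_{A^*}]$ to $\cK_B = \cH \oplus [F^2(H_n)\otimes \cD_{B^*}]$, has a $2\times 2$ block matrix form with respect to these decompositions, and that the intertwining/restriction conditions $L_{B,A}|_\cH = I_\cH$ and $L_{B,A}W_i = V_iL_{B,A}$ force the $(1,1)$-entry to be $I_\cH$ and the $(1,2)$-entry to be $0$. Indeed, from $L_{B,A}|_\cH = I_\cH$ we get that the first column of $L_{B,A}$ is $I_\cH \oplus 0$ composed suitably, so dually the first row of $L_{B,A}^*$ is $[I_\cH\ \ 0]$; this gives the top row of \eqref{LBA*}. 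Writing the second row as $[\Omega\ \ \Theta]$ with $\Omega:\cH\to F^2(H_n)\otimes\cD_{B^*}$ and $\Theta:F^2(H_n)\otimes\cD_{A^*}\to F^2(H_n)\otimes\cD_{B^*}$, I will translate the intertwining relation $W_i^* L_{B,A}^* = L_{B,A}^* V_i^*$ into equations for $\Omega$ and $\Theta$ using the explicit dilation matrices \eqref{dilmatr} for $V_i$ (on $\cK_A$, with defect $\cD_{A^*}$) and the analogous ones for $W_i$ (on $\cK_B$, with defect $\cD_{B^*}$).

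The computation in the $(1,1)$ and $(2,1)$ blocks of the intertwining identity will produce the recursion $\Omega = (S_i^*\otimes I)\Omega$-type relations together with a boundary term involving $D_{B^*,i}^*$ and $D_{A^*,i}^*$, which upon unfolding yields precisely condition (i) (the $(1,1)$-level constraint $\Delta_{B^*}\Omega_0 = [A_1^*-B_1^*;\ldots;A_n^*-B_n^*]$) and the closed form for $\Omega$ in (ii) as a geometric series $\sum_{\alpha}e_\alpha\otimes\Omega_0 A_\alpha^*$ — here $\Omega_0$ is read off as the "first Fourier coefficient" of $\Omega$, i.e. $\Omega_0 := (\text{compression to }\CC\otimes\cD_{B^*})\,\Omega|_\cH$. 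Boundedness of $\Omega$ is (ii). Similarly, the $(2,2)$ block of the intertwining identity gives that $\Theta$ is multi-analytic (it commutes with the right creation operators $R_i\otimes I$), and matching its symbol against the dilation data yields (iii) ($\Delta_{A^*} = \Delta_{B^*}\Theta_0$, the "zeroth coefficient" level) and the explicit Fourier representation in (iv). Conversely, given operators $\Omega_0,\Theta_0$ with (i)–(iv), one defines $L_{B,A}^*$ by \eqref{LBA*}, checks directly that it intertwines (reversing the above algebra) and restricts to $I_\cH$, and then invokes Theorem \ref{equivalent2}: the mere \emph{existence} of a bounded $L_{B,A}$ with these properties is equivalent to $A\overset{H}{\prec}\,B$ (with $\|L_{B,A}\|$ finite). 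So the equivalence "$A\overset{H}{\prec}\,B$ iff (i)–(iv) hold" follows once the block-matrix correspondence is established in both directions.

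The main obstacle I anticipate is the bookkeeping that identifies the recursively-defined entries of $L_{B,A}^*$ with the \emph{closed-form} expressions in (ii) and (iv): one must carefully unfold the intertwining relation term by term, using that $\{e_\alpha\}$ is an orthonormal basis and that $R_\alpha^* e_\beta$ is either $e_{\beta\backslash_l\alpha}$ or $0$, to see that the coefficient of $e_\alpha$ in $\Omega h$ is exactly $\Omega_0 A_\alpha^* h$ and that the Fourier coefficients of $\Theta$ assemble into the stated resolvent expression $(I-\sum R_j\otimes A_j^*)^{-1}$. This is essentially the same algebra that produces the characteristic function $\Theta_A$ (compare the formula for $\Theta_T(R_1,\ldots,R_n)$ recalled earlier), so I expect the identity $L_{B,A}^*|_{F^2(H_n)\otimes\cD_{A^*}}$ to land on a twisted version of $\Theta_A$ relative to $\Theta_B$; making the convergence of the resolvent rigorous (it converges in norm for $rA$ and one takes $r\to 1$, using Corollary \ref{Lr}) is the one genuinely delicate analytic point. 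Everything else — the block structure, the equivalence with Harnack domination — is a routine consequence of Theorem \ref{equivalent2} and the explicit dilation \eqref{dilmatr}.
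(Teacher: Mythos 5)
Your proposal follows the paper's proof essentially verbatim: block-decompose $L_{B,A}^*$ against $\cK_T=\cH\oplus[F^2(H_n)\otimes\cD_{T^*}]$, use the dilation matrices \eqref{dilmatr} to turn the intertwining relation into the four block-wise identities for $\Omega$ and $\Theta$, unfold these recursively into the closed-form expressions (i)--(iv), and for the converse reverse the algebra and invoke Theorem \ref{equivalent2}. Two small corrections: multi-analyticity of $\Theta$ means intertwining with the left creation operators $S_i\otimes I$ (equivalently, membership in $R_n^\infty\bar\otimes B(\cD_{A^*},\cD_{B^*})$), not commuting with $R_i\otimes I$; and the $r\to 1$ analytic step you anticipate is not needed, since boundedness of $\Omega$ and $\Theta$ is inherited directly from that of $L_{B,A}$ once the block form is identified.
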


\begin{proof}Assume that $A\overset{H}{{\prec}}\, B$.  According to
Theorem \ref{equivalent2}, there is a unique  bounded operator
$L_{B,A}:\cK_B\to \cK_A$ satisfying $L_{B,A}|_\cH=I_\cH$ and
$L_{B,A}W_i=V_i L_{B,A}$ for   $i=1,\ldots, n$. Consequently, due to
relation \eqref{dilmatr}, the operator $L_{B,A}^*:\cH\oplus
[F^2(H_n)\otimes \cD_{A^*}] \to \cH\oplus [F^2(H_n)\otimes
\cD_{B^*}]$ has the operator matrix  representation
$$
L_{B,A}^*=\left[\begin{matrix} I_\cH& 0\\
\Omega &  \Theta
\end{matrix}\right],
$$
where $\Omega:\cH\to  F^2(H_n)\otimes \cD_{B^*}$ and
$\Theta:F^2(H_n)\otimes \cD_{A^*}\to F^2(H_n)\otimes \cD_{B^*}$ are
bounded operators. Moreover, we have

$$
\left[\begin{matrix} I_\cH& 0\\
\Omega &  \Theta
\end{matrix}\right]\left[\begin{matrix} A_i^*& D_{A^*,i}^*\\
0 &  S_i\otimes I_{\cD_{A^*}}
\end{matrix}\right]=\left[\begin{matrix} B_i^*& D_{B^*,i}^*\\
0 &  S_i\otimes I_{\cD_{B^*}}
\end{matrix}\right]\left[\begin{matrix} I_\cH& 0\\
\Omega &  \Theta
\end{matrix}\right].
$$
Hence,  for each $i=1,\ldots, n$, we deduce the following relations:

\begin{equation}
\label{AB} A_i^*=B_i^*+D_{B^*,i}^* \Omega,
\end{equation}
\begin{equation}
\label{Om} \Omega A_i^*=  ( S_i^*\otimes I_{\cD_{B^*}}) \Omega,
\end{equation}
\begin{equation}
\label{Da} D_{A^*,i}^*= D_{B^*,i}^* \Theta,
\end{equation}
 and
\begin{equation}
\label{OD} \Omega D_{A^*,i}^*= ( S_i^*\otimes I_{\cD_{B^*}})\Theta-
\Theta ( S_i^*\otimes I_{\cD_{A^*}}).
\end{equation}

Define $\Omega_0:\cH\to \cD_{B^*}$ by setting  $\Omega_0:=J_{B^*}
(P_\CC\otimes I_{\cD_{B^*}}) \Omega$, where $J_{B^*}:\CC\otimes
\cD_{B^*}\to \cD_{B^*}$ is the unitary operator defined by
$J_{B^*}(1\otimes d)=d$ for $d\in \cD_{B^*}$. Let $P_i:\cH^{(n)}\to
\cH$ be the orthogonal projection on the $i$-component of the direct
sum $\cH^{(n)}$. A straightforward computation shows that
\begin{equation}
\label{PiD} D_{B^*,i}^* f=P_i\Delta_{B^*} J_{B^*}(P_\CC\otimes
I_{\cD_{B^*}})f, \qquad f\in F^2(H_n)\otimes \cD_{B^*}.
\end{equation}
A similar relation holds if $B$ is replaced by $A$. Due to relations
\eqref{PiD} and  \eqref{AB}, we deduce that
\begin{equation*}
P_i\Delta_{B^*}\Omega_0=P_i\Delta_{B^*}J_{B^*}(P_\CC\otimes
I_{\cD_{B^*}})\Omega=D_{B^*,i}^* \Omega=A_i^*-B_i^*,
\end{equation*}
which implies (i). Now, since $\Omega:\cH\to  F^2(H_n)\otimes
\cD_{B^*}$ is a bounded linear operator,  we have \begin{equation}
\label{Ome} \Omega h=\sum_{\alpha\in \FF_n}e_\alpha \otimes
\Omega_{(\alpha)}h, \quad h\in \cH,
\end{equation}
where $\Omega_{(\alpha)}:\cH\to \cD_{B^*}$, $\alpha\in \FF_n^+$, are
bounded operators with $\sum_{\alpha\in\FF_n^+}
\|\Omega_{(\alpha)}h\|^2\leq C\|h\|^2$, $h\in \cH$, for some
constant $C>0$. Since $I_{F^2(H_n)}-\sum_{i=1}^n S_iS_i^*=P_\CC$,
the orthogonal projection on the constants, and using relations
\eqref{Om} and \eqref{Ome}, we deduce that
\begin{equation*}
\begin{split}
\Omega h&= (I_{F^2(H_n)}\otimes \Omega_0)(1\otimes
h)+\left(\sum_{i=1}^n S_iS_i^*\otimes I_{\cD_{B^*}}\right) \Omega
h\\
&=(I_{F^2(H_n)}\otimes \Omega_0)(1\otimes h)+ \sum_{i=1}^n
(S_i\otimes I_{\cD_{B^*}})\Omega A_i^*h\\
&= (I_{F^2(H_n)}\otimes \Omega_0)(1\otimes h)+ \sum_{i=1}^n
(S_i\otimes I_{\cD_{B^*}}) \left( \sum_{\beta\in \FF_n}e_\beta
\otimes \Omega_{(\beta)}A_i^*h\right).
\end{split}
\end{equation*}
Hence, we obtain
$$
\sum_{\alpha\in \FF_n}e_\alpha \otimes \Omega_{(\alpha)}h =
(I_{F^2(H_n)}\otimes \Omega_0)(1\otimes h) +\sum_{\beta\in
\FF_n}(I\otimes \Omega_{(\beta)})\left(\sum_{i=1}^n S_i\otimes
A_i^*\right)(e_\beta\otimes h)
$$
for any $h\in \cH$. Therefore,  for each  $k=1,2,\ldots$, we have
\begin{equation}
\label{formu} \sum_{|\alpha|=k}e_\alpha \otimes \Omega_{(\alpha)}h =
\sum_{|\beta|=k-1}(I\otimes \Omega_{(\beta)})\left(\sum_{i=1}^n
S_i\otimes A_i^*\right)(e_\beta\otimes h).
\end{equation}
In particular, if $k=1$, we  deduce that
\begin{equation}
\label{prima} \sum_{j=1}^n e_j\otimes
\Omega_{(g_j)}h=(I_{F^2(H_n)}\otimes \Omega_0)\left(\sum_{j=1}^n
e_j\otimes A_j^*h\right).
\end{equation}
Now, we prove by induction that
\begin{equation}
\label{m} \sum_{|\alpha|=m}e_\alpha \otimes \Omega_{(\alpha)}h =
  (I_{F^2(H_n)}\otimes
\Omega_0) \left(\sum_{j=1}^n R_j\otimes A_j^*\right)^m(1\otimes h)
\end{equation}
for any $m=1,2,\ldots$. Assume that \eqref{m} holds for $m=k$. Then,
due to \eqref{formu},  we have
\begin{equation*}
\begin{split}
 \sum_{|\alpha|=k+1}e_\alpha \otimes \Omega_{(\alpha)}h
 &=\sum_{|\beta|=k}(I\otimes \Omega_{(\beta)})\left(\sum_{i=1}^n
S_i\otimes A_i^*\right)(e_\beta\otimes h)\\
&=\sum_{i=1}^n (S_i\otimes I)\left(\sum_{|\beta|=k} e_\beta \otimes
\Omega_{(\beta)} A_i^*h\right)\\
&=\sum_{i=1}^n (S_i\otimes I)(I_{F^2(H_n)}\otimes \Omega_0)
\left(\sum_{j=1}^n R_j\otimes A_j^*\right)^k(1\otimes A_i^*h)\\
&=(I_{F^2(H_n)}\otimes \Omega_0) \left(\sum_{j=1}^n R_j\otimes
A_j^*\right)^k \sum_{i=1}^n(S_i\otimes I)(1\otimes A_i^*h)\\
&=(I_{F^2(H_n)}\otimes \Omega_0) \left(\sum_{j=1}^n R_j\otimes
A_j^*\right)^{k+1}(1\otimes h).
\end{split}
\end{equation*}
Here we also used the fact  that $S_i R_j=R_jS_i$ for any
$i,j=1,\ldots, n$. Therefore, relation \eqref{m} holds.
Consequently, part (ii) follows.

To prove part (iii), let $\Theta_0:\cD_{A^*}\to \cD_{B^*}$ be
defined by
$$
\Theta_0 x:= J_{B^*}(P_\CC\otimes I_{\cD_{B^*}})\Theta (1\otimes
x),\quad x\in \cD_{A^*}.
$$
Due to relations  \eqref{PiD} and \eqref{Da}, we have

\begin{equation} \label{PiDOD}
 P_i\Delta_{A^*} J_{A^*}(P_\CC\otimes I_{\cD_{A^*}})f=D_{A^*,i}^* f
 =D_{B^*,i}^* \Theta f=P_i\Delta_{B^*} J_{B^*}(P_\CC\otimes
 I_{\cD_{B^*}})(\Theta f)
\end{equation}
for any $ f\in F^2(H_n)\otimes \cD_{A^*}$ and $i=1,\ldots, n$. In
the particular case when $f=1\otimes x$, $x\in \cD_{A^*}$, relation
\eqref{PiDOD} implies $P_i\Delta_{A^*}x=P_i\Delta_{B^*} \Theta_0x$,
which proves part (iii).

On the other hand, if $f=e_\gamma\otimes x$ with $\gamma\in
\FF_n^+$, $|\gamma|\geq 1$, and $x\in \cD_{A^*}$, then relation
\eqref{PiDOD} implies $\Delta_{B^*} J_{B^*}(P_\CC\otimes
 I_{\cD_{B^*}})(\Theta(e_\gamma\otimes x))=0$. Since $\Delta_{B^*}$
 is one-to-one on $\cD_{B^*}$, we deduce that
 \begin{equation}
 \label{eq=0}
(P_\CC\otimes
 I_{\cD_{B^*}})(\Theta(e_\gamma\otimes x))=0.
 \end{equation}
Now, due to relations \eqref{OD} and \eqref{PiD}, we have
\begin{equation}
\label{OD2} \Omega P_i\Delta_{A^*} J_{A^*}(P_\CC\otimes
I_{\cD_{A^*}})f=
 \Omega D_{A^*,i}^*f= ( S_i^*\otimes I_{\cD_{B^*}})\Theta f-
\Theta ( S_i^*\otimes I_{\cD_{A^*}})f
\end{equation}
for any $f\in F^2(H_n)\otimes \cD_{A^*}$. In the particular case
when $f=1\otimes x$, $x\in \cD_{A^*}$, relation \eqref{OD2} implies
\begin{equation}
\label{DPD} \Omega P_i\Delta_{A^*} x=( S_i^*\otimes
I_{\cD_{B^*}})\Theta(1\otimes x).
\end{equation}
Using  relation \eqref{DPD}, part (ii) of this theorem,  and the
fact that $I_{F^2(H_n)}-\sum_{i=1}^n S_iS_i^*=P_\CC$, we deduce that
\begin{equation*}
\begin{split}
\Theta(1\otimes x)&= (I\otimes \Theta_0)(1\otimes
x)+\left(\sum_{i=1}^n
S_iS_i^*\otimes I_{\cD_{B^*}}\right))\Theta(1\otimes x)\\
&= (I\otimes \Theta_0)(1\otimes x)+\sum_{i=1}^n \left(S_i\otimes
I_{\cD_{B^*}}\right)\Omega P_i\Delta_{A^*}x\\
&= (I\otimes \Theta_0)(1\otimes x)+\sum_{i=1}^n \left(S_i\otimes
I_{\cD_{B^*}}\right)\left(\sum_{\alpha\in \FF_n^+} e_\alpha \otimes
\Omega_0 A_\alpha^*P_i\Delta_{A^*} x\right)\\
&= (I\otimes \Theta_0)(1\otimes x) +(I\otimes \Omega_0)
\sum_{k=0}^\infty \left(\sum_{|\alpha|=k} R_{\tilde \alpha}\otimes
A_\alpha^*\right) \sum_{i=1}^n  \left(R_i\otimes
I_\cH\right)(1\otimes P_i\Delta_{A^*} x)\\
&= (I\otimes \Theta_0)(1\otimes x) +(I\otimes \Omega_0)
\sum_{k=0}^\infty\left(\sum_{j=1}^n R_j\otimes A_j^*\right)^k
[R_1\otimes I_\cH,\ldots, R_n\otimes
 I_\cH](I \otimes \Delta_{A^*})(1\otimes x)
\end{split}
\end{equation*}
for any $x\in \cD_{A^*}$.

Now we prove that $\Theta$ is a multi-analytic operator, i.e.,
$\Theta ( S_i\otimes I_{\cD_{A^*}})=(S_i\otimes I_{\cD_{B^*}})
\Theta$ for any $i=1,\ldots, n$. Using relation \eqref{OD2} when
$f=e_{g_i \beta}\otimes x$ with $\beta\in \FF_n^+$ and $x\in
\cD_{A^*}$, we obtain
$$
( S_i^*\otimes I_{\cD_{B^*}})\Theta (e_{g_i \beta}\otimes x)- \Theta
( S_i^*\otimes I_{\cD_{A^*}})(e_{g_i \beta}\otimes x)=\Omega P_i
\Delta_{A^*}(0)=0.
$$
Hence, we have
\begin{equation} \label{anal1}
\Theta(e_\beta\otimes x)=( S_i^*\otimes I_{\cD_{A^*}})\Theta (e_{g_i
\beta}\otimes x)
\end{equation}
for any $\beta\in \FF_n^+$ and $i=1,\ldots, n$. Similarly, if $i\neq
j$, then \eqref{OD2} implies
\begin{equation}
\label{anal2}
 (S_j^*\otimes I_{\cD_{B^*}})\Theta(e_{g_i\beta}\otimes
x)=0.
\end{equation}
Now, using relations \eqref{anal1}, \eqref{anal2}, and \eqref{eq=0},
we obtain
\begin{equation*}
\begin{split}
\Theta(e_{g_i\beta}\otimes x) &= (P_\CC\otimes
I_{\cD_{B^*}})\Theta(e_{g_i\beta}\otimes x) +\left( \sum_{j=1}^n
S_jS_j^*\otimes
I_{\cD_{B^*}}\right)\Theta(e_{g_i\beta}\otimes x)\\
&=(S_i\otimes I_{\cD_{B^*}})(S_i^*\otimes
I_{\cD_{B^*}})\Theta(e_{g_i\beta}\otimes x)\\
&=(S_i\otimes I_{\cD_{B^*}}) \Theta (e_\beta\otimes x).
\end{split}
\end{equation*}
Consequently, we deduce that
$$
\Theta ( S_i\otimes I_{\cD_{A^*}})(e_\beta\otimes x)=(S_i\otimes
I_{\cD_{B^*}}) \Theta(e_\beta\otimes x)
$$
for any $\beta\in \FF_n^+$ and $x\in \cD_{A^*}$, which shows that
$\Theta$ is a multi-analytic operator. Hence, using the calculations
for $\Theta(1\otimes x)$, and the fact that $S_iR_j=R_jS_i$ for
$i,j=1,\ldots, n$, we deduce that
\begin{equation*}
\Theta(e_\alpha\otimes x)= (I\otimes \Theta_0)(e_\alpha\otimes x)
+(I\otimes \Omega_0) \sum_{k=0}^\infty\left(\sum_{j=1}^n R_j\otimes
A_j^*\right)^k [R_1\otimes I_\cH,\ldots, R_n\otimes
 I_\cH](I \otimes \Delta_{A^*})(e_\alpha\otimes x)
\end{equation*}
for any $\alpha\in \FF_n^+$ and $x\in \cD_{A^*}$. Therefore,
$\Theta$ is a multi-analytic operator with Fourier representation
given in part (iv).

Now, we prove the converse of this theorem. Assume that there exit
two bounded linear operators
 $\Omega_0:\cH\to \cD_{B^*}$ and $\Theta_0:\cD_{A^*}\to \cD_{B^*}$ such that
 the  conditions (i)-(iv)
 are satisfied.
 Let $X:\cK_B\to \cK_A$  be defined by
$$
X^*:=\left[\begin{matrix} I_\cH& 0\\
\Omega &  \Theta
\end{matrix}\right].
$$
Since $X|\cH=I_\cH$, due to Theorem \ref{equivalent2}, it remains to
show that $XW_i=V_i X$ for any  $i=1,\ldots, n$. As we saw at the
beginning of this proof, it is enough to show that  that relations
\eqref{AB}, \eqref{Om}, \eqref{Da}, and \eqref{OD} hold.

First, notice that condition  (i) implies  $P_i\Delta_{B^*}
\Omega_0=A_i^*-B_i^*$, $i=1,\ldots,n$. Using relation  \eqref{PiD}
and part (ii), we deduce \eqref{AB}. Similarly, conditions  (iii)
and (iv) imply \eqref{Da}. To prove \eqref{Om}, note that, using
condition  (ii) and the fact that $S_i^* S_j=\delta_{ij}I$ for
$i,j=1,\ldots, n$, we have
\begin{equation*}
\begin{split}
( S_i^*\otimes I_{\cD_{B^*}}) \Omega&= ( S_i^*\otimes
I_{\cD_{B^*}})(I\otimes \Omega_0)\left(\sum_{k=0}^\infty
\sum_{|\alpha|=k}
(S_\alpha \otimes A_\alpha^*) (1\otimes h)\right)\\
&=(I\otimes \Omega_0) \left(\sum_{k=0}^\infty \sum_{|\beta|=k}
(S_\beta \otimes A_\beta^*) (1\otimes A_i^*h)\right)\\
&=\Omega A_i^*h
\end{split}
\end{equation*}
for any $h\in \cH$ and $i=1,\ldots, n$. It remains to prove relation
\eqref{OD}.

Let $f=e_{g_i\beta}\otimes x$ with $\beta\in \FF_n^+$,
$i,j=1,\ldots, n$,  and  $x\in \cD_{A^*}$. Note that, since $\Theta$
is multi-analytic operator and $S_j^* S_i=\delta_{ij} I$, we have
\begin{equation*}
\begin{split}
(S_j^*\otimes I_{\cD_{B^*}})\Theta f&- \Theta ( S_j^*\otimes
I_{\cD_{A^*}})f \\
&= (S_j^*\otimes I_{\cD_{B^*}})\Theta (S_iS_\beta \otimes
I_{\cD_{A^*}})(1\otimes x)- \Theta ( S_j^*\otimes
I_{\cD_{A^*}})(S_iS_\beta \otimes I_{\cD_{A^*}})(1\otimes x)\\
&= (\delta_{ij} I\otimes I_{\cD_{B^*}}) \Theta (S_\beta\otimes
I_{\cD_{A^*}})(1\otimes x)-\Theta (\delta_{ij} I\otimes
I_{\cD_{A^*}})(e_\beta\otimes x)\\
&=0
\end{split}
\end{equation*}
and \ $
 \Omega D_{A^*,i}^* f=\Omega P_j\Delta(0)=0$. Therefore, in this
 case, we have
 $$
 (S_j^*\otimes I_{\cD_{B^*}})\Theta f- \Theta ( S_j^*\otimes
I_{\cD_{A^*}})f=\Omega D_{A^*,i}^* f=0.
$$

Note also that part (iv) and part (ii) imply
\begin{equation*}
\begin{split}
(S_j^*\otimes I_{\cD_{B^*}})&\Theta (1\otimes x)- \Theta (
S_j^*\otimes
I_{\cD_{A^*}})(1\otimes x) \\
&= (S_j^*\otimes I_{\cD_{B^*}})(I\otimes \Omega_0) \sum_{k=0}^\infty
\left[\left(\sum_{p=1}^n R_p\otimes A_p^*\right)^k [R_1\otimes
I_\cH,\ldots, R_n\otimes
 I_\cH](I \otimes \Delta_{A^*})(1\otimes x)\right]\\
 &=
 ( S_j^*\otimes \Omega_0)
\sum_{k=0}^\infty \left[\left(\sum_{p=1}^n R_p\otimes A_p^*\right)^k
[S_1\otimes I_\cH,\ldots, S_n\otimes
 I_\cH](1 \otimes \Delta_{A^*}x)\right]\\
 &=
 (S_j^*\otimes  \Omega_0)
\sum_{k=0}^\infty \left[\left(\sum_{p=1}^n R_p\otimes A_p^*\right)^k
\sum_{i=1}^n (S_i\otimes I_\cH)(1\otimes  P_i\Delta_{A^*}x)\right]\\
 &=
 \sum_{i=1}^n (S_j^*S_i\otimes  \Omega_0)\sum_{k=0}^\infty
 \left[\left(\sum_{p=1}^n R_p\otimes A_p^*\right)^k
 (1\otimes  P_i\Delta_{A^*}x)\right]\\
  &=
 \Omega (P_j\Delta_{A^*}x)=\Omega D^*_{A^*,j}(1\otimes x)
\end{split}
\end{equation*}
for any $x\in \cD_{A^*}$ and $j=1,\ldots, n$. Therefore, relation
\eqref{OD} holds. The proof is complete.
\end{proof}

If $L_{B,A}^*$ is given by  relation \eqref{LBA*},  we set
$\Omega_{B,A}:=\Omega$ and $\Theta_{B,A}:=\Theta$. Using Theorem
\ref{geometric}, one can easily obtain the following result.

\begin{corollary}
\label{co1} Let $A,B,C\in [B(\cH)^n]_1^-$.
\begin{enumerate}
\item[(i)] If $A\overset{H}{{\prec}}\, B$  and $B\overset{H}{{\prec}}\,
C$, then $A\overset{H}{{\prec}}\, C$. In this case, we have
$$L_{C,A}^*=L_{C,B}^* L_{B,A}^* \quad \text{ and }\quad
\Theta_{C,A}=\Theta_{C,B} \Theta_{B,A}.
$$
\item[(ii)] If $A\overset{H}{{\prec}}\, B$  then $A\overset{H}{\sim}\, B$ if and
only if $\Theta_{B,A}$ is  an invertible multi-analytic operator. In
this case, we have $$\Theta_{B,A}^{-1}=\Theta_{A,B}.$$
\end{enumerate}
\end{corollary}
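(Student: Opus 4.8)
The plan is to derive both parts from two facts already established: the uniqueness of the canonical intertwining operator of Theorem \ref{equivalent2} --- any bounded operator restricting to $I_\cH$ on $\cH$ and intertwining the minimal isometric dilations $W$ of $B$ and $V$ of $A$ must send $W_\alpha h$ to $V_\alpha h$ for $h\in\cH$, hence equals the operator defined by \eqref{LBA} and is unique --- and the block description of $L_{B,A}^*$ from Theorem \ref{geometric}.

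For part (i): transitivity of $\overset{H}{\prec}$ is immediate from its definition, since $\text{\rm Re}\,p(A)\le c_1^2\,\text{\rm Re}\,p(B)$ and $\text{\rm Re}\,p(B)\le c_2^2\,\text{\rm Re}\,p(C)$ for all admissible $p$ yield $\text{\rm Re}\,p(A)\le(c_1c_2)^2\,\text{\rm Re}\,p(C)$. Let $V$, $W$, $Z$ on $\cK_A$, $\cK_B$, $\cK_C$ be the minimal isometric dilations of $A$, $B$, $C$, and let $L_{B,A}$, $L_{C,B}$, $L_{C,A}$ be the operators from Theorem \ref{equivalent2}. Then $L_{B,A}L_{C,B}\colon\cK_C\to\cK_A$ restricts to $I_\cH$ and satisfies $(L_{B,A}L_{C,B})Z_i=L_{B,A}W_iL_{C,B}=V_i(L_{B,A}L_{C,B})$, so by uniqueness $L_{C,A}=L_{B,A}L_{C,B}$ and hence $L_{C,A}^*=L_{C,B}^*L_{B,A}^*$. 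Substituting \eqref{LBA*} and multiplying,
\[
\left[\begin{matrix} I_\cH& 0\\ \Omega_{C,B}& \Theta_{C,B}\end{matrix}\right]\left[\begin{matrix} I_\cH& 0\\ \Omega_{B,A}& \Theta_{B,A}\end{matrix}\right]=\left[\begin{matrix} I_\cH& 0\\ \Omega_{C,B}+\Theta_{C,B}\Omega_{B,A}& \Theta_{C,B}\Theta_{B,A}\end{matrix}\right],
\]
and comparing the lower-right entries with the block form of $L_{C,A}^*$ gives $\Theta_{C,A}=\Theta_{C,B}\Theta_{B,A}$ (and, incidentally, $\Omega_{C,A}=\Omega_{C,B}+\Theta_{C,B}\Omega_{B,A}$).

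For part (ii): taking $B=A$ in \eqref{LBA} gives $L_{A,A}=I_{\cK_A}$, so $\Theta_{A,A}=I$. If $A\overset{H}{\sim}B$ then both $A\overset{H}{\prec}B$ and $B\overset{H}{\prec}A$ hold, so $\Theta_{B,A}$ and $\Theta_{A,B}$ are defined (and multi-analytic by Theorem \ref{geometric}(iv)); applying part (i) to the chains $A\overset{H}{\prec}B\overset{H}{\prec}A$ and $B\overset{H}{\prec}A\overset{H}{\prec}B$ gives $\Theta_{A,B}\Theta_{B,A}=\Theta_{A,A}=I$ and $\Theta_{B,A}\Theta_{A,B}=\Theta_{B,B}=I$, so $\Theta_{B,A}$ is invertible with $\Theta_{B,A}^{-1}=\Theta_{A,B}$. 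Conversely, suppose $A\overset{H}{\prec}B$ and $\Theta_{B,A}$ is an invertible multi-analytic operator. Then \eqref{LBA*} presents $L_{B,A}^*$ as a lower-triangular operator matrix with invertible diagonal entries $I_\cH$ and $\Theta_{B,A}$, so $L_{B,A}^*$, and hence $L_{B,A}$, is invertible with
\[
(L_{B,A}^*)^{-1}=\left[\begin{matrix} I_\cH& 0\\ -\Theta_{B,A}^{-1}\Omega_{B,A}& \Theta_{B,A}^{-1}\end{matrix}\right].
\]
By Lemma \ref{OMr}, $A\overset{H}{\sim}B$ and $L_{A,B}=L_{B,A}^{-1}$, whence $L_{A,B}^*=(L_{B,A}^*)^{-1}$; comparing with the block form of $L_{A,B}^*$ furnished by Theorem \ref{geometric} identifies $\Theta_{A,B}=\Theta_{B,A}^{-1}$, which is again multi-analytic since $\Theta_{B,A}(S_i\otimes I)=(S_i\otimes I)\Theta_{B,A}$ forces the same relation for the inverse.

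The bookkeeping is routine once these structural inputs are assembled; the only delicate point is the appeal to uniqueness of the canonical intertwiner to justify $L_{C,A}=L_{B,A}L_{C,B}$ and, in part (ii), to read off $\Theta_{A,B}$ from the triangular inverse of $L_{B,A}^*$ --- both handled by the observation that every intertwiner restricting to $I_\cH$ equals the operator \eqref{LBA}. I do not expect a substantive obstacle.
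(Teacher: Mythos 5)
Your proof is correct and follows the route the paper indicates (the paper merely remarks that the corollary follows ``easily'' from Theorem \ref{geometric}, without giving details). The two structural inputs you isolate --- uniqueness of the canonical intertwiner from Theorem \ref{equivalent2}, which yields $L_{C,A}=L_{B,A}L_{C,B}$ by checking the defining properties of the composition, and the lower-triangular block form of $L_{B,A}^*$ from Theorem \ref{geometric} --- are exactly the ingredients needed, and your bookkeeping (in particular $\Theta_{A,A}=I$ via $L_{A,A}=I_{\cK_A}$, the triangular inversion in part (ii), and the observation that the inverse of an invertible multi-analytic operator is again multi-analytic) is sound.
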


Consider now the particular case when $A, B\in [B(\cH)^n]_1^-$  and
$B=0$. Note that Theorem \ref{geometric} implies  $ \Omega_0
 =\left[\begin{matrix} A_1^*\\ \vdots\\ A_n^*\end{matrix}\right]$
 and $\Theta_0=\Delta_{A^*}$.
 In this case, one can use Theorem \ref{geometric} and  some results
 from \cite{Po-hyperbolic} to obtain the following consequences.
We recall that the  joint spectral radius associated
 with an  $n$-tuple of operators
$ X:=(X_1,\ldots, X_n)\in B(\cH)^n$  is given by
$$
r(X):=\lim_{k\to \infty}\left\|\sum_{|\alpha|=k} X_\alpha
X_\alpha^*\right\|^{1/2k}.
$$
Moreover, $r(X)$ is equal to the spectral radius of the
reconstruction operator $ \sum_{i=1}^n X_i^*\otimes R_i$.

\begin{corollary} If $A\in [B(\cH)^n]_1^-$, then the following
statements are equivalent:
\begin{enumerate}
\item[(i)] $A\overset{H}{{\prec}}\, 0$;
\item[(ii)]
$\Omega_{0,A}$ and $\Theta_{0,A}$ are bounded operators;
\item[(iii)]
the joint spectral radius $r(A)<1$.
\end{enumerate}
\end{corollary}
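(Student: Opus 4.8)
The plan is to read the equivalence $(i)\Leftrightarrow(ii)$ straight off Theorem \ref{geometric} applied with $B=0$, and then to tie both of them to the spectral-radius condition $(iii)$ by elementary estimates on the completely positive map $\Phi_A(X):=\sum_{i=1}^nA_iXA_i^*$ on $B(\cH)$.

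For $(i)\Leftrightarrow(ii)$, put $B=0$ in Theorem \ref{geometric}. Then $\Delta_{B^*}=([\delta_{ij}I_\cH]_{n\times n})^{1/2}=I_{\cH^{(n)}}$, $\cD_{B^*}=\cH^{(n)}$, and $\cK_B=\cH\oplus[F^2(H_n)\otimes\cH^{(n)}]$ by \eqref{dilmatr}. Conditions (i) and (iii) of Theorem \ref{geometric} then force $\Omega_0$ to be the column operator $\cH\to\cH^{(n)}$ with entries $A_1^*,\ldots,A_n^*$, and $\Theta_0=\Delta_{A^*}$, both automatically bounded; so $\Omega_0$ and $\Theta_0$ are uniquely pinned down and the only remaining content of Theorem \ref{geometric} is the requirement that the operators $\Omega_{0,A}$ and $\Theta_{0,A}$ produced from them through (ii) and (iv) be bounded. (Condition (iv) also asks that $\Theta_{0,A}$ be multi-analytic; but, exactly as in the proof of Theorem \ref{geometric}, any bounded operator with the Fourier coefficients displayed there commutes with the shifts $S_i\otimes I$ --- since each $R_j$ commutes with each $S_i$ --- hence is multi-analytic, so this imposes nothing further.) Consequently $A\overset{H}{\prec}0$ holds if and only if $\Omega_{0,A}$ and $\Theta_{0,A}$ are bounded.

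It remains to establish $(ii)\Leftrightarrow(iii)$. For $(iii)\Rightarrow(ii)$, recall from the remark preceding the statement that $r(A)$ is the spectral radius of $R_A=\sum_iA_i^*\otimes R_i$, hence of the unitarily equivalent operator $\sum_jR_j\otimes A_j^*$; so if $r(A)<1$ then $(I-\sum_jR_j\otimes A_j^*)^{-1}$ is a bounded operator (its Neumann series converges in norm), and, since $\Omega_{0,A}$ and $\Theta_{0,A}$ are by Theorem \ref{geometric}(ii),(iv) built from this resolvent together with the bounded operators $\Omega_0,\Delta_{A^*}$ and the contraction $[R_1\otimes I_\cH,\ldots,R_n\otimes I_\cH]$, both are bounded. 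For $(ii)\Rightarrow(iii)$, a short computation using $R_i^*R_j=\delta_{ij}I$ gives $\|\Omega_{0,A}h\|^2=\sum_{k\ge1}\langle(\sum_{|\alpha|=k}A_\alpha A_\alpha^*)h,h\rangle$ for $h\in\cH$, so boundedness of $\Omega_{0,A}$ forces $\sum_{k\ge0}\Phi_A^k(I)$ to converge to a bounded operator $T$. Since $\Phi_A$ is completely positive with $\Phi_A(I)\le I$, the sequence $\Phi_A^k(I)$ is decreasing, whence $(k+1)\Phi_A^k(I)\le\sum_{j=0}^k\Phi_A^j(I)\le T$ and $b_k:=\|\Phi_A^k(I)\|\le\|T\|/(k+1)\to0$. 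Complete positivity gives $\|\Phi_A^k\|=\|\Phi_A^k(I)\|=b_k$, hence $b_{j+k}=\|\Phi_A^j(\Phi_A^k(I))\|\le b_jb_k$; a submultiplicative sequence tending to $0$ satisfies $\inf_kb_k^{1/k}<1$, and since $r(A)^2=\lim_kb_k^{1/k}=\inf_kb_k^{1/k}$ we conclude $r(A)<1$. (Alternatively, $(i)\Leftrightarrow(iii)$ follows from Corollary \ref{Lr}, the first part of Theorem \ref{LCC}, and the description of the Harnack part of $0$ in \cite{Po-hyperbolic}.)

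Assembling the three implications yields all the equivalences. The step needing the most care is $(ii)\Rightarrow(iii)$: boundedness of $\Omega_{0,A}$ only gives norm convergence of the increasing series $\sum_k\Phi_A^k(I)$, and for a general decreasing sequence of positive operators this does not even force $\|\Phi_A^k(I)\|\to0$; it is the monotonicity of $\Phi_A^k(I)$ --- special to row contractions, and responsible for the $\|T\|/(k+1)$ decay --- combined with the submultiplicativity coming from complete positivity that actually pushes the spectral radius strictly below $1$. The $B=0$ specialization of Theorem \ref{geometric} and the automatic multi-analyticity in condition (iv) are routine by comparison.
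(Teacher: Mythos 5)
Your proof is correct. For $(i)\Leftrightarrow(ii)$ you specialize Theorem \ref{geometric} to $B=0$ exactly as the paper intends: with $\Delta_{B^*}=I_{\cH^{(n)}}$, conditions (i) and (iii) of that theorem pin down $\Omega_0$ and $\Theta_0$ uniquely, and you correctly observe that condition (iv)'s multi-analyticity is automatic once boundedness is granted (a bounded operator whose Fourier expansion involves only the $R_\alpha$'s commutes with every $S_i\otimes I$, since each truncation does and the property survives SOT limits). Your interpretation of statement (ii) --- that the formal operators built from Theorem \ref{geometric} define bounded operators --- is the only sensible reading, and matches the paragraph in the paper preceding the corollary.

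Where you genuinely diverge from the paper is in $(ii)\Leftrightarrow(iii)$. The paper outsources the spectral-radius equivalence to \cite{Po-hyperbolic} (and, in the $n=1$ case, to Ando--Suciu--Timotin); you instead prove it directly. The computation $\|\Omega_{0,A}h\|^2=\sum_{k\ge1}\langle\Phi_A^k(I)h,h\rangle$ is right (reindexing $\alpha g_j$ over nonempty words), and the chain --- monotone decrease of $\Phi_A^k(I)$ forces $\|\Phi_A^k(I)\|\le\|T\|/(k+1)\to 0$, positivity of $\Phi_A^k$ gives $\|\Phi_A^k\|=\|\Phi_A^k(I)\|$ hence submultiplicativity, and Fekete then yields $r(A)^2=\inf_k b_k^{1/k}<1$ --- is a clean, self-contained alternative. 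You correctly flag that the monotonicity step is what makes the argument work; WOT decay alone would not. The reverse direction via the Neumann series for $(I-\sum_j R_j\otimes A_j^*)^{-1}$ (unitarily equivalent to $I-R_A$) is standard and fine. Net effect: your argument is more elementary and avoids citing the Harnack-part-of-$0$ characterization from \cite{Po-hyperbolic}, at the modest cost of a CP-map estimate the paper did not need to spell out; the alternative route you mention at the end is the one the paper actually has in mind.
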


We remak that when $n=1$, the equivalence $(i)\leftrightarrow (iii)$
was obtained by Ando-Suciu-Timotin in \cite{AST} and the equivalence
$(i)\leftrightarrow (ii)$ was obtained by Suciu in \cite{Su2}.

\begin{corollary} If $A\in [B(\cH)^n]_1^-$, then the following
statements are equivalent:
\begin{enumerate}
\item[(i)] $A\overset{H}{\sim}\, 0$ ;
\item[(ii)]
$\Omega_{0,A}$  is  bounded and  $\Theta_{0,A}$ is invertible;
\item[(iii)]
$A\in [B(\cH)^n]_1$.
\end{enumerate}
\end{corollary}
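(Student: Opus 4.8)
The plan is to establish the two equivalences $(i)\Leftrightarrow(ii)$ and $(i)\Leftrightarrow(iii)$, in each case feeding the machinery of this section the special value $B=0$. As noted just before the corollary, for $B=0$ the operators of Theorem \ref{geometric} are forced to be $\Omega_0=\left[\begin{matrix}A_1^*\\ \vdots\\ A_n^*\end{matrix}\right]$ and $\Theta_0=\Delta_{A^*}$, so conditions (i) and (iii) of that theorem hold automatically; hence the only content of $A\overset{H}{{\prec}}\,0$ is that $\Omega_{0,A}$ and $\Theta_{0,A}$ be bounded, which is exactly the preceding corollary.

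For $(i)\Leftrightarrow(ii)$ I would argue as follows. If $A\overset{H}{\sim}\,0$, then in particular $A\overset{H}{{\prec}}\,0$, so $\Omega_{0,A}$ and $\Theta_{0,A}$ are bounded by the preceding corollary; and since $A\overset{H}{{\prec}}\,0$ together with $A\overset{H}{\sim}\,0$, Corollary \ref{co1}(ii) shows $\Theta_{0,A}$ is an invertible multi-analytic operator, which is (ii). Conversely, if $\Omega_{0,A}$ is bounded and $\Theta_{0,A}$ is invertible, then both $\Omega_{0,A}$ and $\Theta_{0,A}$ are bounded, so the preceding corollary gives $A\overset{H}{{\prec}}\,0$, and then Corollary \ref{co1}(ii) (invertibility of $\Theta_{0,A}$) upgrades this to $A\overset{H}{\sim}\,0$.

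For $(i)\Leftrightarrow(iii)$: the implication $(iii)\Rightarrow(i)$ is the fact — Theorem 1.6 of \cite{Po-hyperbolic}, already invoked in the proof of Theorem \ref{LCC} — that any two points of $[B(\cH)^n]_1$ are Harnack equivalent, applied to $A$ and $0$. For $(i)\Rightarrow(iii)$, fix $c\geq1$ with $A\overset{H}{{\underset{c}\sim}}\,0$. Since $P(0,R)=I$, relation \eqref{equi} gives $P(rA,R)\geq \frac{1}{c^{2}}I$, equivalently $P(rA,R)^{-1}\leq c^{2}I$, for all $r\in[0,1)$. Using the factorization $P(rA,R)=C_{rA}^{*}C_{rA}$ with $C_{rA}=(\Delta_{rA}\otimes I)(I-R_{rA})^{-1}$ from the proof of Theorem \ref{LCC} (legitimate since $rA\in[B(\cH)^n]_1$), one has $P(rA,R)^{-1}=(I-R_{rA})(\Delta_{rA}^{-2}\otimes I)(I-R_{rA}^{*})$; evaluating the resulting quadratic form on a vector $h\otimes1$, with $1$ the vacuum of $F^{2}(H_n)$, and using $R_{rA}^{*}(h\otimes1)=0$ (each $R_{i}^{*}$ annihilates the constants), yields
\begin{equation*}
\left\langle (I-r^{2}S)^{-1}h,h\right\rangle\leq c^{2}\|h\|^{2},\qquad h\in\cH,\quad r\in[0,1),
\end{equation*}
where $S:=A_{1}A_{1}^{*}+\cdots+A_{n}A_{n}^{*}$. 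Hence $I-r^{2}S\geq \frac{1}{c^{2}}I$ for every $r<1$, so $S\leq(1-c^{-2})I$ and $\|S\|<1$; that is, $A\in[B(\cH)^n]_1$.

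The one substantive point is $(i)\Rightarrow(iii)$: one must exclude boundary row contractions that are nevertheless Harnack equivalent to $0$, and such an $A$ is not ruled out by the preceding corollary since it may well have $r(A)<1$. The Poisson-kernel/vacuum-vector computation above is the clean way to see this; alternatively one could pass through Theorem \ref{Harnack}(iii) to reduce to the single contraction $R_A$ (recall $R_A^{*}R_A=S\otimes I$, so $\|R_A\|^{2}=\|S\|$) and invoke the classical $n=1$ description of the Harnack part of the origin. Everything else is a formal consequence of Theorem \ref{geometric}, Corollary \ref{co1}, and the preceding corollary.
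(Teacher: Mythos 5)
Your proof is correct. The paper gives no explicit argument for this corollary (it is asserted as a consequence of Theorem~\ref{geometric} and results from \cite{Po-hyperbolic}), and you supply exactly the missing details using exactly the tools the paper points to: the preceding corollary (for boundedness of $\Omega_{0,A}$, $\Theta_{0,A}$ $\Leftrightarrow$ $A\overset{H}{\prec}0$), Corollary~\ref{co1}(ii) (upgrading $\overset{H}{\prec}$ to $\overset{H}{\sim}$ via invertibility of $\Theta_{0,A}$), and Theorem~1.6 of \cite{Po-hyperbolic} for $(iii)\Rightarrow(i)$. The one place requiring genuine work is $(i)\Rightarrow(iii)$, and your vacuum-vector computation — extracting $(I-r^2S)^{-1}\leq c^2 I$ from $P(rA,R)^{-1}\leq c^2I$ by testing on $h\otimes 1$ and using that $R_{rA}^*$ annihilates $\cH\otimes 1$ and $R_{rA}(\cH\otimes 1)\perp\cH\otimes 1$ — is correct and clean. (An even shorter route to the same conclusion: apply Theorem~\ref{Harnack}(ii) with $B=0$ to the $(n+1)\times(n+1)$ block of $K_A$ indexed by $\{g_0,g_1,\dots,g_n\}$; the Schur complement of $\bigl[\begin{smallmatrix}(1-c^{-2})I & A\\ A^* & (1-c^{-2})I\end{smallmatrix}\bigr]\geq 0$ immediately gives $\sum A_iA_i^*\leq(1-c^{-2})^2I$.)
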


We remak that when $n=1$, the equivalence $(i)\leftrightarrow (iii)$
was obtained by Foia\c s in \cite{Fo} and the equivalence
$(i)\leftrightarrow (ii)$ was obtained by Suciu in \cite{Su2}.

\bigskip

\section{Schwarz-Pick lemma with respect to the hyperbolic distance}

In this section, we obtain  a Schwartz-Pick  lemma for contractive
free holomorphic  functions $F$ on $[B(\cH)^n]_1$ with respect to
the intertwining operator $L_{B,A}$. As a consequence, we deduce
that
$$
\delta(F(z), F(\xi))\leq \delta(z,\xi), \qquad z,\xi\in \BB_n,
$$
where $\delta$ is the hyperbolic distance.

We recall that a free holomorphic function $G:[B(\cH)^n]_1\to B(\cH)
\otimes_{min} B(\cE)$ is bounded if
$$
\|G\|_\infty:=\sup  \|G(X_1,\ldots, X_n)\|<\infty,
$$
where the supremum is taken over all $n$-tuples  of operators
$[X_1,\ldots, X_n]\in [B(\cK)^n]_1$ and  $\cK$ is an infinite
dimensional Hilbert space. We know that (see \cite{Po-holomorphic},
\cite{Po-pluriharmonic})  a bounded free holomorphic function $G$ is
uniquely determined by  its boundary function $\widetilde
G(R_1,\ldots, R_n)\in R_n^\infty\bar \otimes B(\cE)$ defined by
$$\widetilde G(R_1,\ldots, R_n)=\text{\rm SOT-}\lim_{r\to 1}
G(rR_1,\ldots, rR_n). $$ Moreover, $G$ is essentially the
noncommutative Poisson transform of $\widetilde G(R_1,\ldots, R_n)$.

 First, we need the
following result.

\begin{theorem} \label{S-P} Let $F_j:[B(\cH)^n]_1\to B(\cH) \otimes_{min}
B(\cE)$, $j=1,\ldots, m$, be free holomorphic functions with
coefficients in $B(\cE)$, and assume that $F:=(F_1,\ldots, F_m)$ is
a contractive free holomorphic function. If $W\in [B(\cH)^n]_1$,
then $F(0) \overset{H}{{\prec}}\, F(W)$
  and
$$
\left\|L_{ F(W), F(0)}\right\|\leq
\left(\frac{1+\|W\|}{1-\|W\|}\right)^{1/2}.
$$
\end{theorem}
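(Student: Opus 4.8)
The plan is to reduce the assertion to a statement about the reconstruction operators and then apply the von Neumann inequality together with the $n=1$ machinery already developed (relations \eqref{CACB}, \eqref{LTT'}, and Proposition \ref{connection}). First I would record the meaning of ``contractive'': since $F=(F_1,\dots,F_m)$ is a contractive free holomorphic function, its boundary function $\widetilde F(R_1,\dots,R_n)=[\widetilde F_1\ \cdots\ \widetilde F_m]$ is a row contraction in $R_n^\infty\bar\otimes B(\cE)$, hence $F(X):=[F_1(X)\ \cdots\ F_m(X)]$ is a row contraction for every $X\in[B(\cK)^n]_1$; in particular $F(0)$ and $F(W)$ lie in $[B(\cE\otimes\cH)^m]_1^-$ (and $F(W)\in[\cdot]_1$ when $W\in[B(\cH)^n]_1$). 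By Theorem \ref{equivalent4} and relation \eqref{CACB}, to prove $F(0)\overset{H}{\prec}F(W)$ with the stated bound it suffices to estimate $\|L_{F(W),F(0)}\|=\|L_{R_{F(W)},R_{F(0)}}\|$ via the reconstruction operators, i.e. via Proposition \ref{connection}: I need
$$
\|L_{F(W),F(0)}\|=\sup_{e^{it}\in\TT}\bigl\|(I-R_{F(W)}R_{F(W)}^*)^{-1/2}(I-e^{it}R_{F(W)})(I-e^{it}R_{F(0)})^{-1}(I-R_{F(0)}R_{F(0)}^*)^{1/2}\bigr\|.
$$

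The key idea is a composition/Poisson-transform argument: the map $X\mapsto F(X)$ should intertwine the relevant Poisson kernels, so that $P(F(X),R)$ is obtained from $P(X,S)$ (or from an appropriate dilation of $F$) by a completely positive map, collapsing the problem for $F$ to the trivial instance $A=0$, $B=W$. Concretely I would use \eqref{CACB}, which says $\|L_{F(W),F(0)}\|^2=\inf\{c^2\ge1:\ P(F(0),R)\le c^2P(F(W),R)\}$, together with the analogous formula for $\|L_{W,0}\|$. The plan is then: (1) show $P(F(0),R)\le c^2 P(F(W),R)$ whenever $P(0,R)\le c^2 P(W,R)$, by realizing $X\mapsto F(X)$ as composition with the bounded free holomorphic map $F$ and invoking the fact (from \cite{Po-holomorphic}, \cite{Po-pluriharmonic}, cited in the excerpt) that composing with a contractive free holomorphic function is implemented by a unital completely positive map on the Poisson kernels; (2) conclude $\|L_{F(W),F(0)}\|\le\|L_{W,0}\|$. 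Finally I would evaluate $\|L_{W,0}\|$ directly: by Theorem \ref{LCC}, $\|L_{W,0}\|=\|C_0 C_W^{-1}\|=\|(I-R_W)(I-R_WR_W^*)^{-1/2}\|$ (using $\Delta_0=I$), and since $R_W$ is a strict contraction with $\|R_W\|=r(W)\le\|W\|<1$ when $W\in[B(\cH)^n]_1$ — or $\|R_{rW}\|\le\|W\|$ after the scaling in Corollary \ref{Lr} in the closed-ball case — the elementary scalar estimate $\|(I-z)(1-|z|^2)^{-1/2}\|\le\bigl(\tfrac{1+|z|}{1-|z|}\bigr)^{1/2}$ for $|z|\le\|W\|$, combined with the noncommutative von Neumann inequality applied to the strict contraction $R_W$, gives $\|L_{W,0}\|\le\bigl(\tfrac{1+\|W\|}{1-\|W\|}\bigr)^{1/2}$.

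I expect the main obstacle to be step (1): making rigorous that precomposition with the contractive free holomorphic function $F$ induces a positivity-preserving (completely positive) operation on free pluriharmonic Poisson kernels, i.e. that $P(X,S)\le c^2P(Y,S)$ forces $P(F(X),R)\le c^2P(F(Y),R)$. One clean way is to pass through the boundary function: $\widetilde F(R_1,\dots,R_n)$ is a row contraction with operator entries in $R_n^\infty$, so one can dilate it (inside $R_n^\infty\bar\otimes B(\cE)$, or via its own minimal isometric dilation as in Section 1) and then apply the noncommutative Poisson transform $P_X$; the identity $P(F(X),R)=(\mathrm{id}\otimes P_{[\,\cdot\,]})[\text{something positive built from }P(X,\cdot)]$ should follow from functional calculus with the explicit series for $P$. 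An alternative, avoiding the subtlety, is to invoke directly the Schwarz–Pick type result for bounded free holomorphic functions from \cite{Po-hyperbolic}, which already guarantees $F(A)\overset{H}{\prec}F(B)$ whenever $A\overset{H}{\prec}B$, together with the norm monotonicity $\|L_{F(B),F(A)}\|\le\|L_{B,A}\|$; then the theorem is just the special case $A=0$, $B=W$, plus the explicit computation of $\|L_{W,0}\|$ above. I would present the proof via this second route and relegate the computation of $\|L_{W,0}\|$ to a short lemma-free paragraph using Theorem \ref{LCC} and the von Neumann inequality.
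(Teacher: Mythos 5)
The proposal contains a genuine gap in step (1), and the suggested fallback for it (``route 2'') is circular. The inequality $\|L_{F(B),F(A)}\|\le\|L_{B,A}\|$ cannot be ``invoked directly from \cite{Po-hyperbolic}'': the intertwining operator $L_{B,A}$ is introduced and studied for the first time in the present paper (Section 1), so no such result exists in \cite{Po-hyperbolic}. What \cite{Po-hyperbolic} does contain is the metric version $\delta(F(A),F(B))\le\delta(A,B)$, but since $\delta(A,B)=\ln\max\{\|L_{A,B}\|,\|L_{B,A}\|\}$, this only yields $\max\{\|L_{F(A),F(B)}\|,\|L_{F(B),F(A)}\|\}\le\max\{\|L_{A,B}\|,\|L_{B,A}\|\}$, which does not give the one-sided bound you need. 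As you yourself observe, route 1 --- the claim that precomposition with a contractive free holomorphic $F$ is implemented by a unital completely positive map carrying $P(\cdot,S)$-dominations to $P(F(\cdot),R)$-dominations --- is the crux and you do not establish it; the proposal just expresses a hope that it ``should follow from functional calculus''. The paper's actual argument is quite different and substantially harder: it forms $\Psi(X)=\sum_j F_j(X)\otimes{\bf R}_j^*$, conjugates by a M\"obius-type involution so that the result $\Gamma$ satisfies $\Gamma(0)=0$, applies the noncommutative Gleason decomposition $\widetilde\Gamma=\sum_i(R_i\otimes I)\widetilde\Lambda_i$ (so that $\Psi(W)=\Psi(0)+D_{\Psi(0)^*}Z[I+\Psi(0)^*Z]^{-1}D_{\Psi(0)}$ with $Z=\sum_i(W_i\otimes I)\Lambda_i(W)$), feeds this into the formula of Proposition \ref{connection} rewritten via $\Psi(0)={\bf R}_{F(0)}^*$ and $\Psi(W)={\bf R}_{F(W)}^*$, and then bounds the resulting expression by recognizing the Nagy--Foia\c s characteristic function $\Theta_{\Psi(0)}(e^{it})$ inside it, ending with $\|L_{F(W),F(0)}\|^2\le(1+\|Z\|)/(1-\|Z\|)$ and $\|Z\|\le\|W\|$ by von Neumann. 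None of this is present in your sketch.

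Two smaller points. First, the identification $\|R_W\|=r(W)\le\|W\|$ is off: since $R_W^*R_W=(\sum_iW_iW_i^*)\otimes I$, one has $\|R_W\|=\|W\|$ exactly (the \emph{spectral radius} of $R_W$ equals $r(W)$, not its norm). Second, the appeal to the von Neumann inequality to bound $\|L_{W,0}\|=\|C_W^{-1}\|=\|(I-R_W)(\Delta_W^{-1}\otimes I)\|$ is misplaced: this expression is not a polynomial (or $H^\infty$-function) of a single contraction, because $\Delta_W^{-1}\otimes I=(I-R_W^*R_W)^{-1/2}$ involves $R_W^*$. The bound $\|L_{W,0}\|\le\bigl(\tfrac{1+\|W\|}{1-\|W\|}\bigr)^{1/2}$ is true and can be obtained directly: with $T:=R_W$ and $\rho:=\|W\|$, the self-adjoint operator $(I-T^*T)^{-1/2}(I-T)^*(I-T)(I-T^*T)^{-1/2}$ equals $(I-T^*T)^{-1}(I+T^*T)-(I-T^*T)^{-1/2}(T+T^*)(I-T^*T)^{-1/2}$, whose norm is at most $\tfrac{1+\rho^2}{1-\rho^2}+\tfrac{2\rho}{1-\rho^2}=\tfrac{1+\rho}{1-\rho}$, since $\|T(I-T^*T)^{-1/2}\|\le\rho/\sqrt{1-\rho^2}$. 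But even with this piece repaired, step (1) remains unproved, and it is precisely the content of the theorem.
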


\begin{proof}
First, we prove the theorem when $F$ is a strictly contractive free
holomorphic function. According to Lemma 1.2 from
\cite{Po-automorphism}, we must have $\|\widetilde F(rR_1,\ldots,
rR_n)\|<1$ for any $r\in [0,1)$, where $\widetilde F(R_1,\ldots,
R_n)$ is the boundary function of $F$. Denote by ${\bf R}_1,\ldots,
{\bf R}_m$ the right creation operators on the full Fock space with
$m$ generators, $F^2(H_m)$. Define the free holomorphic function
$\Psi:[B(\cH)^n]_1\to B(\cH)\otimes_{min} B(\cE\otimes  F^2(H_m))$
by
\begin{equation}
\label{defi} \Psi(X_1,\ldots, X_n):=\sum_{j=1}^m F_j(X_1,\ldots,
X_n)\otimes {\bf R}_j^*,\qquad (X_1,\ldots, X_n)\in [B(\cH)^n]_1.
\end{equation}
 Since ${\bf R}_j^* {\bf R}_p=\delta_{pj}I$, $p,j=1,\ldots, m$,   the boundary
function $\widetilde \Psi(R_1,\ldots, R_n)\in B(F^2(H_n)\otimes
\cE\otimes F^2(H_m))$ has the property that
$$
\|\widetilde \Psi(rR_1,\ldots, rR_n)\| =\left\|\sum_{j=1}\widetilde
F_j(rR_1,\ldots, rR_n)\widetilde F_j(rR_1,\ldots, rR_n)^*
\right\|^{1/2} =\|\widetilde F(rR_1,\ldots, rR_n)\|<1
$$
for any $r\in [0,1)$. Therefore, $\Psi$ is a strictly contractive
free holomorphic function. In particular, we have $\|\widetilde
\Psi(0)\|<1$.

Let $\Gamma:[B(\cH)^n]_1\to B(\cH)\otimes_{min} B(\cE\otimes
F^2(H_m))$ be the bounded free holomorphic function having the
boundary function
\begin{equation}
\label{Gamm} \widetilde \Gamma(R_1,\ldots, R_n):=-\widetilde
{\Psi}(0)+D_{\widetilde {\Psi}(0)^*}\widetilde {\Psi}(R_1,\ldots,
R_n)\left[I-\widetilde {\Psi}(0)^*\widetilde {\Psi}(R_1,\ldots,
R_n)\right]^{-1} D_{\widetilde \Psi(0)},
\end{equation}
where $D_Y:=(I-Y^*Y)^{1/2}$.  Note that $\Gamma$ is a  bounded free
holomorphic function with $\Gamma(0)=0$. Moreover, one can prove
that $\Gamma$ is contractive. Indeed, for each $y\in F^2(H_n)\otimes
\cE\otimes F^2(H_m)$ set $$\omega:=\left[I-\widetilde
{\Psi}(0)^*\widetilde {\Psi}(R_1,\ldots, R_n)\right]^{-1}
D_{\widetilde \Psi(0)}y.
$$
Since $ \left[\begin{matrix}
-\widetilde \Psi(0) & D_{\widetilde \Psi(0)^*}\\
D_{\widetilde \Psi(0)}& \widetilde \Psi(0)^*\end{matrix}\right] $ is
a unitary operator and
$$
\left[\begin{matrix}\widetilde \Gamma(R_1,\ldots, R_n)y\\
 D_{\widetilde \Psi(0)} y +\widetilde {\Psi}(0)^*\widetilde {\Psi}(R_1,\ldots,
R_n) \omega
 \end{matrix}\right]
  =
\left[\begin{matrix}
-\widetilde \Psi(0) & D_{\widetilde \Psi(0)^*}\\
D_{\widetilde \Psi(0)}& \widetilde \Psi(0)^*\end{matrix}\right]
\left[\begin{matrix} y\\
  \widetilde {\Psi}(R_1,\ldots,
R_n) \omega,
 \end{matrix}\right]
$$
  we deduce that

\begin{equation*}
\begin{split}
\|\widetilde \Gamma(R_1,\ldots, R_n)y\|&= \|y\|^2 +
 \left\| \widetilde {\Psi}(R_1,\ldots, R_n)\omega\right\|^2
 -\left\|D_{\widetilde \Psi(0)} y +\widetilde
{\Psi}(0)^*\widetilde {\Psi}(R_1,\ldots, R_n)\omega\right\|^2\\
&= \|y\|^2-\left(\left\|\omega\right\|^2   -\left\|\widetilde
{\Psi}(R_1,\ldots, R_n) \omega\right\|^2\right).
\end{split}
\end{equation*}
Now, since $\widetilde \Psi(R_1,\ldots, R_n)$ is a contraction, we
deduce that $\widetilde \Gamma(R_1,\ldots, R_n)$ is a contraction.
Therefore, $\Gamma$ is a contractive free holomorphic function with
$\Gamma(0)=0$. Due to  a noncommutative version of Gleason problem
(see Theorem 2.4 from \cite{Po-holomorphic}), we have
\begin{equation}
\label{gafa} \widetilde \Gamma(R_1,\ldots, R_n)=\sum_{i=1}^n
\left(R_i\otimes I_{\cE\otimes F^2(H_m)}\right)
\widetilde\Lambda_i(R_1,\ldots, R_n),
\end{equation}
for some free holomorphic functions $\Lambda_i$, $i=1,\ldots, n$,
with coefficients in $B(\cE\otimes F^2(H_m))$ and such that
$\sum_{i=1}^n \widetilde \Lambda_i(R_1,\ldots, R_n)^*\widetilde
\Lambda_i(R_1,\ldots, R_n)\leq I$.\

Now, a closer look at  relation \eqref{Gamm} reveals that (we recall
that $D_{Y^*}Y=Y D_Y$)
\begin{equation*}
\begin{split}
\widetilde \Gamma(R_1,\ldots, R_n)&= D_{\widetilde {\Psi}(0)^*}^{-1}
\left\{-D_{\widetilde {\Psi}(0)^*}\widetilde {\Psi}(0) D_{\widetilde
{\Psi}(0)}^{-1} \left[I-\widetilde {\Psi}(0)\widetilde
{\Psi}(R_1,\ldots, R_n)\right] + D_{\widetilde
{\Psi}(0)^*}^2\widetilde {\Psi}(R_1,\ldots, R_n)\right\}\\
&\qquad \qquad \times  \left[I-\widetilde {\Psi}(0)\widetilde
{\Psi}(R_1,\ldots,
R_n)\right]^{-1}D_{\widetilde {\Psi}(0)}\\
&= D_{\widetilde {\Psi}(0)^*}^{-1}\left[\widetilde
{\Psi}(R_1,\ldots, R_n)- \widetilde
{\Psi}(0)\right]\left[I-\widetilde {\Psi}(0)^* \widetilde
{\Psi}(R_1,\ldots, R_n)\right]^{-1}D_{\widetilde {\Psi}(0)}.
\end{split}
\end{equation*}
Hence, we obtain
 \begin{equation*}
\widetilde {\Psi}(R_1,\ldots, R_n)= \left[\widetilde
{\Psi}(0)+D_{\widetilde {\Psi}(0)^*}\widetilde \Gamma(R_1,\ldots,
R_n)D_{\widetilde {\Psi}(0)}^{-1}\right] \left[I+D_{\widetilde
{\Psi}(0)^*}\widetilde \Gamma(R_1,\ldots, R_n)D_{\widetilde
{\Psi}(0)}^{-1}\widetilde\Psi(0)^*\right]^{-1},
\end{equation*}
which, taking into account that  $Y^*D_{Y^*}=D_{Y} Y^*$, implies
$$
\widetilde {\Psi}(R_1,\ldots, R_n)=\widetilde{\Psi}(0)+
D_{\widetilde {\Psi}(0)^*}\widetilde \Gamma(R_1,\ldots, R_n)\left[
I+\widetilde{\Psi}(0)^* \widetilde \Gamma(R_1,\ldots,
R_n)\right]^{-1}D_{\widetilde {\Psi}(0)}.
$$
Now, using  relation \eqref{gafa} and the fact  the operators
$\widetilde{\Psi}(0)^*$ and $D_{\widetilde {\Psi}(0)^*}$ are
commuting with   $R_i\otimes I_{\cE\otimes F^2(H_m)}$, $i=1,\ldots,
n$, we obtain
\begin{equation*}
\begin{split}
\widetilde {\Psi}(R_1,\ldots, R_n)&=\widetilde{\Psi}(0)+ \left[
\sum_{i=1}^n \left(R_i\otimes I_{\cE\otimes
F^2(H_m)}\right)D_{\widetilde {\Psi}(0)^*}\widetilde
\Lambda_i(R_1,\ldots, R_n)\right]\\
&\qquad \qquad  \times \left[I+\sum_{i=1}^n \left(R_i\otimes
I_{\cE\otimes F^2(H_m)}\right) \widetilde {\Psi}(0)^*\widetilde
\Lambda_i(R_1,\ldots, R_n)\right]^{-1}D_{\widetilde {\Psi}(0)}.
\end{split}
\end{equation*}

Now, fix $W:=(W_1,\ldots, W_n)\in [B(\cH)^n]_1$. Using the
$F_n^\infty$-functional calculus for row contractions
\cite{Po-funct},  the latter relation implies
\begin{equation}
\label{Impo}
 \Psi(W)= \Psi(0)+D_{ {\Psi}(0)^*} Z\left[
I +\Psi(0)^* Z\right]^{-1}D_{\Psi(0)},
\end{equation}
where \begin{equation} \label{def-Z}
Z:=\sum_{i=1}^n (W_i\otimes
I_{\cE\otimes F^2(H_m)})\Lambda_i(W_1,\ldots, W_n).
\end{equation}

Since $ \left[\begin{matrix}
  \Psi(0) & D_{  \Psi(0)^*}\\
D_{ \Psi(0)}&  - \Psi(0)^*\end{matrix}\right] $ is a unitary
operator and
$$
\left[\begin{matrix} \Psi(W)x\\
 D_{ \Psi(0)} x - {\Psi}(0)^* Z[I+\Psi(0)^*Z]^{-1} D_{\Psi(0)}x
 \end{matrix}\right]
  =
\left[\begin{matrix}
  \Psi(0) & D_{  \Psi(0)^*}\\
D_{  \Psi(0)}&  - \Psi(0)^*\end{matrix}\right]
\left[\begin{matrix} x\\
   Z[I+\Psi(0)^*Z]^{-1} D_{\Psi(0)} x
 \end{matrix}\right],
$$
for any $x\in \cH\otimes \cE\otimes F^2(H_m)$,
  we deduce that
\begin{equation*}
\begin{split}
\|\Psi(W)x\|^2&= \|x\|^2 +\|Z[I+\Psi(0)^*Z]^{-1} D_{\Psi(0)}x\|^2-
\| D_{ \Psi(0)} x - {\Psi}(0)^*
Z[I+\Psi(0)^*Z]^{-1} D_{\Psi(0)}x\|^2\\
&= \|x\|^2 +\|Z[I+\Psi(0)^*Z]^{-1} D_{\Psi(0)}x\|^2 -\|[I+\Psi(0)^*
Z]^{-1}D_{\Psi(0)}x\|^2.
\end{split}
\end{equation*}
Hence, we have
$$
\|[I+\Psi(0)^* Z]^{-1}D_{\Psi(0)}x\|^2=\|
D_{\Psi(W)}x\|^2+\|Z[I+\Psi(0)^*Z]^{-1} D_{\Psi(0)}x\|^2,
$$
which implies
\begin{equation*}
  \|[I+\Psi(0)^* Z]^{-1}D_{\Psi(0)}x\|^2\leq
\frac{1}{1-\|Z\|^2}\| D_{\Psi(W)}x\|^2
\end{equation*}
for any $x\in \cH\otimes \cE\otimes F^2(H_m)$. Since $\Psi$ is a
strictly contractive free holomorphic function, $\|\Psi(W)\|<1$.
Consequently, $D_{\Psi(W)}$ is invertible and the latter inequality
implies
\begin{equation}
\label{ine1} \|[I+\Psi(0)^* Z]^{-1}D_{\Psi(0)}
D_{\Psi(W)}^{-1}y\|^2\leq \frac{1}{1-\|Z\|^2}\|  y\|^2
\end{equation}
for any $y\in \cH\otimes \cE\otimes F^2(H_m)$.

Since $F$ is a  strictly contractive free holomorphic function,
$F(0)$ and $F(W)$ are strict row contractions in $B(\cH\otimes
\cE)^m$. Applying Proposition \ref{connection}, we obtain
\begin{equation*}
\left\|L_{F(W),F(0)}\right\|= \sup_{e^{it}\in \TT} \left\|(I-{\bf
R}_{F(W)}  {\bf R}_{F(W)}^*)^{-1/2} (I-e^{it} {\bf
R}_{F(W)})(I-e^{it} {\bf R}_{F(0)})^{-1} (I-{\bf R}_{F(0)} {\bf
R}_{F(0)}^*)^{1/2}\right\|
\end{equation*}
where   ${\bf R}_X:=X_1^*\otimes {\bf R}_1+\cdots + X_m^*\otimes
{\bf R}_m$ is the reconstruction operator associated with the
$m$-tuple  $(X_1,\ldots. X_m)\in [B(\cH\otimes \cE)^m]_1$. Now,
using relation \eqref{defi}, one can see that $\Psi(0)={\bf
R}_{F(0)}^*$ and $\Psi(W)={\bf R}_{F(W)}^*$.  Therefore, the latter
equality becomes
\begin{equation}
\label{formula1} \left\|L_{F(W),F(0)}\right\|= \sup_{e^{it}\in
\TT}\left\| D_{\Psi(W)}^{-1} \left[I-e^{it}
\Psi(W)^*\right]\left[I-e^{it} \Psi(0)^*\right]^{-1}
D_{\Psi(0)}\right\|.
\end{equation}
Now, we need to calculate the right hand side of this equality. Note
that, using  the formula \eqref{Impo} for $\Psi(W)$, we have
\begin{equation*}
\begin{split}
D_{\Psi(W)}^{-1} &\left[I-e^{it} \Psi(W)^*\right]\left[I-e^{it}
\Psi(0)^*\right]^{-1} D_{\Psi(0)}\\
&= D_{\Psi(W)}^{-1}\left\{ I-e^{it} \Psi(0)^*-e^{it} D_{\Psi(0)}
[I+Z^*\Psi(0)]^{-1}
Z^*D_{\Psi(0)^*}\right\} [I-e^{it}\Psi(0)^*]^{-1} D_{\Psi(0)}\\
&= D_{\Psi(W)}^{-1} D_{\Psi(0)}-e^{it}D_{\Psi(W)}^{-1}
D_{\Psi(0)}[I+Z^*\Psi(0)]^{-1}
Z^*D_{\Psi(0)^*}[I-e^{it}\Psi(0)^*]^{-1} D_{\Psi(0)}\\
&= D_{\Psi(W)}^{-1} D_{\Psi(0)}[I+Z^*\Psi(0)]^{-1}
\left\{I+Z^*\Psi(0)-e^{it}Z^* D_{\Psi(0)^*} [I-e^{it}\Psi(0)^*]^{-1}
D_{\Psi(0)}\right\}\\
&=D_{\Psi(W)}^{-1} D_{\Psi(0)}[I+Z^*\Psi(0)]^{-1}
\left[I+Z^*\Theta_{\Psi(0)}(e^{it})\right],
\end{split}
\end{equation*}
where $$ \Theta_{\Psi(0)}(e^{it}):=\Psi(0)-e^{it}  D_{\Psi(0)^*}
[I-e^{it}\Psi(0)^*]^{-1} D_{\Psi(0)},\qquad e^{it}\in \TT,
$$
is the Nagy-Foia\c s characteristic function of the contraction
$\Psi(0)$ (see \cite{SzF-book}), which is contractive. Consequently,
relation \eqref{formula1} and inequality \eqref{ine1} imply
\begin{equation*}
\begin{split}
\left\|L_{F(W),F(0)}\right\|^2&=\sup_{e^{it}\in
\TT}\left\|[I+\Theta_{\Psi(0)}(e^{it})^*Z][I+\Psi(0)^*Z]^{-1}
D_{\Psi(0)} D_{\Psi(W)}^{-1}\right\|^2\\
&\leq (1+\|Z\|)^2 \frac{1}{1-\|Z\|^2}=\frac{1+\|Z\|}{1-\|Z\|}.
\end{split}
\end{equation*}
Due to  relations \eqref{def-Z} and \eqref{gafa}, and the
noncommutative von Neumann inequality (\cite{Po-von}),  we have
$\|Z\|\leq \|W\|$. Consequently, the  inequality above  implies
$$
\left\|L_{F(W),F(0)}\right\|\leq
\left(\frac{1+\|W\|}{1-\|W\|}\right)^{1/2},
$$
which proves the theorem when  $F$ is strictly contractive.

Now we consider the general case when $F$ is a contractive free
holomorphic function. Then  note that  $F_r:=rF$,  $r\in [0,1)$, is
strictly contractive and, therefore, $
\left\|L_{rF(W),rF(0)}\right\|\leq
\left(\frac{1+\|W\|}{1-\|W\|}\right)^{1/2}$, for any $r\in [0,1)$.
According to Theorem \ref{Lr},  we have $F(0)\overset{H}{{\prec}}\,
F(W)$ and
$$\|L_{F(W),F(0)}\|=\sup_{r\in [0, 1)} \|L_{rF(W),rF(0)}\|\leq
\left(\frac{1+\|W\|}{1-\|W\|}\right)^{1/2}
$$
for any $W\in [B(\cH)^n]_1$. The proof is complete.
\end{proof}

We  recall  a few facts concerning the involutive automorphisms of
the unit ball  $\BB_n$ (see  \cite{Ru}). Let $a\in \BB_n$ and
consider $\varphi_a\in
 Aut(\BB_n)$, the automorphism of the unit ball, defined by
 \begin{equation}
 \label{auto}
 \varphi_a(z):= \frac{a-Q_az-s_a (I-Q_a)z}{1-\left<z,a\right>}, \qquad z\in
 \BB_n,
 \end{equation}
where   $Q_0=0$, \, $Q_a
z:=\frac{\left<z,a\right>}{\left<a,a\right>} a$ if $ a\neq 0$, and
$s_a:=(1- \left<a,a\right>)^{1/2}$. The automorphism $\varphi_a$ has
the following properties:
\begin{enumerate}
\item[(i)]
$\varphi_a(0)=a$ and $\varphi_a(a)=0$;
\item[(ii)] $\varphi_a(\varphi_a(z))=z$, \ $z\in \BB_n$;
\item[(iii)]
$1-\|\varphi_a(z)\|_2^2=\frac{(1-\|a\|_2^2)(1-\|z\|_2^2)}{|1-\left<z,a\right>|^2}$,
\ $z\in \overline{\BB}_n$.
\end{enumerate}

\begin{theorem} \label{Sch-part}Let $F:=(F_1,\ldots, F_m)$ be
a contractive free holomorphic function  with coefficients in
$B(\cE)$.
 If $z,\xi\in \BB_n$, then $F(z)\overset{H}{\sim}\, F(\xi)$ and
$$
\|L_{F(z), F(\xi)}\|\leq \|L_{z,\xi}\|=
\left(\frac{1+\|\varphi_z(\xi)\|_2}{1-\|\varphi_z(\xi)\|_2}\right)^{1/2},$$
where $\varphi_z$ is the involutive automorphism of $\BB_n$ which
takes $0$ into $z$.  Moreover,
$$
\delta(F(z), F(\xi))\leq \delta(z,\xi)
$$
for any $z,\xi\in \BB_n$, where $\delta$ is the hyperbolic metric
defined by \eqref{hyperbolic}.
\end{theorem}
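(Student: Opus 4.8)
The plan is to bootstrap everything from Theorem~\ref{S-P} (the case $W=0$) by precomposing $F$ with the involutive automorphisms of the ball. Fix $z,\xi\in\BB_n$ and let $\Psi_\xi\in\mathrm{Aut}([B(\cH)^n]_1)$ be the automorphism sending $\xi$ to $0$; on scalar points it restricts to $\varphi_\xi$ (by \cite{Po-automorphism}), and $\Psi_\xi\circ\Psi_\xi=\mathrm{id}$. Set $G:=F\circ\Psi_\xi=(F_1\circ\Psi_\xi,\dots,F_m\circ\Psi_\xi)$. This is again a free holomorphic function with coefficients in $B(\cE)$, and it is contractive because $F$ is and $\Psi_\xi$ maps $[B(\cK)^n]_1$ into itself; moreover $G(0)=F(\xi)$ and $G(\Psi_\xi(z))=F(z)$. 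Applying Theorem~\ref{S-P} to $G$ at the point $W:=\Psi_\xi(z)$ (a scalar $n$-tuple with $\|W\|=\|\varphi_\xi(z)\|_2<1$) gives $F(\xi)\overset{H}{\prec}F(z)$ together with
\[
\|L_{F(z),F(\xi)}\|\ \le\ \left(\frac{1+\|\varphi_\xi(z)\|_2}{1-\|\varphi_\xi(z)\|_2}\right)^{1/2}.
\]
Interchanging the roles of $z$ and $\xi$ (i.e. using $F\circ\Psi_z$ at the point $\Psi_z(\xi)$) gives $F(z)\overset{H}{\prec}F(\xi)$ and the symmetric estimate with $\|\varphi_z(\xi)\|_2$. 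Hence $F(z)\overset{H}{\sim}F(\xi)$, and by property~(iii) of the $\varphi_a$'s, $1-\|\varphi_\xi(z)\|_2^2=\frac{(1-\|z\|_2^2)(1-\|\xi\|_2^2)}{|1-\langle z,\xi\rangle|^2}=1-\|\varphi_z(\xi)\|_2^2$, so the two bounds coincide; call the common value $M$.

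Next I would identify $\|L_{z,\xi}\|$ with $M$. Specializing the previous paragraph to the (contractive) free holomorphic function $F=\mathrm{id}$ (the coordinate tuple, with $m=n$, $\cE=\CC$) shows $\|L_{z,\xi}\|\le M$ and $\|L_{\xi,z}\|\le M$. For the reverse inequality, combine Theorem~\ref{formula}, which gives $\delta(z,\xi)=\ln\max\{\|L_{z,\xi}\|,\|L_{\xi,z}\|\}$, with the fact (established in \cite{Po-hyperbolic}) that $\delta$ restricts to the Poincar\'e--Bergman metric $\beta_n$ on $\BB_n$, so that $\max\{\|L_{z,\xi}\|,\|L_{\xi,z}\|\}=e^{\beta_n(z,\xi)}=M$. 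Finally $\|L_{z,\xi}\|=\|L_{\xi,z}\|$ at scalar points: by Proposition~\ref{inv} these equal $\|L_{\Psi_\xi(z),0}\|$ and $\|L_{\Psi_z(\xi),0}\|$ respectively, $\Psi_\xi(z)$ and $\Psi_z(\xi)$ have the same Euclidean norm (being $\beta_n$-equidistant from $0$), and $\|L_{w,0}\|$ for a scalar point $w$ depends only on $\|w\|_2$ thanks to the $\Phi_U$-invariance in Proposition~\ref{inv}. Thus $\|L_{z,\xi}\|=M$, and therefore $\|L_{F(z),F(\xi)}\|\le M=\|L_{z,\xi}\|$.

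For the last assertion, since $F(z)\overset{H}{\sim}F(\xi)$, Theorem~\ref{formula} gives $\delta(F(z),F(\xi))=\ln\max\{\|L_{F(z),F(\xi)}\|,\|L_{F(\xi),F(z)}\|\}$, and both norms were bounded above by $M=e^{\delta(z,\xi)}$ in the first paragraph; hence $\delta(F(z),F(\xi))\le\delta(z,\xi)$. The step I expect to require the most care is not an estimate but the bookkeeping around the substitution: one must check that $F\circ\Psi_a$ is genuinely a contractive free holomorphic function so that Theorem~\ref{S-P} applies verbatim, and one must track which of $L_{F(z),F(\xi)}$, $L_{F(\xi),F(z)}$ is produced after replacing the generic point $W$ in Theorem~\ref{S-P} by $\Psi_\xi(z)$ (and likewise by $\Psi_z(\xi)$). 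A secondary subtlety is obtaining the exact value of $\|L_{z,\xi}\|$ rather than just a one-sided bound, which is why the symmetry argument via Proposition~\ref{inv} together with the $\beta_n$-identification of \cite{Po-hyperbolic} is invoked.
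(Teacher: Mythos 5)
Your proof is correct and follows essentially the same strategy as the paper's: precompose $F$ with an involutive automorphism so that Theorem~\ref{S-P} (the base-point case) applies at a translated point, use the symmetry $\|\varphi_z(\xi)\|_2=\|\varphi_\xi(z)\|_2$ to get bounds in both directions, identify $\|L_{z,\xi}\|$ with $\bigl(\tfrac{1+\|\varphi_z(\xi)\|_2}{1-\|\varphi_z(\xi)\|_2}\bigr)^{1/2}$, and conclude via Lemma~\ref{OMr}/Theorem~\ref{formula}. The only genuine divergence is in pinning down the exact value of $\|L_{z,\xi}\|$. The paper is direct: it applies Proposition~\ref{inv} and Theorem~\ref{LCC} to write $\|L_{\xi,z}\|=\|L_{\varphi_z(\xi),0}\|=\|C_{\varphi_z(\xi)}^{-1}\|$ and then invokes the explicit formula $\|C_\lambda^{-1}\|=\bigl(\tfrac{1+\|\lambda\|_2}{1-\|\lambda\|_2}\bigr)^{1/2}$ from \cite{Po-hyperbolic}. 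You instead bootstrap: $\max\{\|L_{z,\xi}\|,\|L_{\xi,z}\|\}=e^{\delta(z,\xi)}=e^{\beta_n(z,\xi)}$ by Theorem~\ref{formula} and the restriction of $\delta$ to $\BB_n$, then deduce $\|L_{z,\xi}\|=\|L_{\xi,z}\|$ from $\Phi_U$- and $\Psi_\lambda$-invariance (Proposition~\ref{inv}) plus the observation that $\|L_{w,0}\|$ depends only on $\|w\|_2$. Both routes are valid; the paper's is a single concrete computation, yours avoids the $\|C_\lambda^{-1}\|$ formula but needs the extra symmetry step $\|L_{z,\xi}\|=\|L_{\xi,z}\|$ to turn the $\max$-identity into an equality for each norm separately. (Your incidental upper bound $\|L_{z,\xi}\|\le M$ from specializing $F=\mathrm{id}$ in Theorem~\ref{S-P} is then redundant, though harmless.) One small bookkeeping point you handled correctly: with the paper's convention, $L_{B,A}$ exists when $A\overset{H}{\prec}B$, so after setting $G=F\circ\Psi_\xi$, $G(0)=F(\xi)$, $G(W)=F(z)$, Theorem~\ref{S-P} indeed yields $F(\xi)\overset{H}{\prec}F(z)$ and a bound on $\|L_{F(z),F(\xi)}\|$, matching your statement.
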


\begin{proof} For simplicity, if $z=(z_1,\ldots, z_n)\in \BB_n$ we   also  use the
notation $ z=(z_1I_\cH,\ldots, z_n I_\cH)\in [B(\cH)^n]_1$. Let
$\Psi_{ z}$ be the involutive automorphism of $[B(\cH)^n]_1$
 which takes $0$ into $z$ (see
\cite{Po-automorphism}) and let $W:=\Psi_{ z}({\bf \xi})\in
[B(\cH)^n]_1$. According to \cite{Po-automorphism}, $\Phi:=F\circ
\Psi_z$  is a contractive free holomorphic function such that
$\Phi(0)=F(z)$ and $\Phi(W)=F(\xi)$. Using  Theorem \ref{S-P}, we
deduce that
\begin{equation*}
\begin{split}
\|L_{F(\xi), F(z)}\|&=\|L_{\Phi(W),\Phi(0)}\|\leq
\left(\frac{1+\|W\|}{1-\|W\|}\right)^{1/2}\\
&=\left(\frac{1+\|\Psi_z(\xi)\|}{1-\|\Psi_z(\xi)\|}\right)^{1/2}.
\end{split}
\end{equation*}
According  to \cite{Po-automorphism}, $\Psi_z$ is a noncommutative
extension of the  involutive automorphism of $\BB_n$ that
interchanges $0$ and $z$, i.e.,   $\Psi_z(x)=\varphi_z(x)$ for $x\in
\BB_n$. Since $\|\varphi_z(\xi)\|_2=\|\varphi_\xi(z)\|_2$, we deduce
that \begin{equation} \label{max} \max\{\|L_{F(z), F(\xi)}\|,
\|L_{F(\xi), F(z)}\|\}\leq
\left(\frac{1+\|\varphi_z(\xi)\|_2}{1-\|\varphi_z(\xi)\|_2}\right)^{1/2}.
\end{equation}

Note that due to Proposition \ref{inv} and Theorem  \ref{LCC}, we
have
$$
\|L_{\xi, z}\|=\|L_{\varphi_z(\varphi_z(\xi)),
\varphi_z(0)}\|=\|L_{\varphi_z(\xi),
0}\|=\|C_{\varphi_z(\xi)}^{-1}\|.
$$
On the other hand, it was proved in \cite{Po-hyperbolic} that
$\|C_\lambda^{-1}\|=\left(\frac{1+\|\lambda\|_2}{1-\|\lambda\|_2}\right)^{1/2}$
for any $\lambda\in \BB_n$. Combining these relations, we deduce
that
$$
\|L_{\xi, z}\|=\|L_{z,\xi}\|=
\left(\frac{1+\|\varphi_z(\xi)\|_2}{1-\|\varphi_z(\xi)\|_2}\right)^{1/2}.
$$

According to \cite{Po-hyperbolic}, $\delta|_{\BB_n\times \BB_n}$
coincides with the Poincar\'e-Bergman distance on $\BB_n$, i.e.,
\begin{equation}
\label{restri}
 \delta(z,\xi)=\frac{1}{2}\ln
\frac{1+\|\varphi_z(\xi)\|_2}{1-\|\varphi_z(\xi)\|_2},\qquad
z,\xi\in \BB_n,
\end{equation}

  Using relations \eqref{max}, \eqref{restri}, and applying
Lemma \ref{OMr} and Theorem \ref{formula}, we  deduce that
  $F(z)\overset{H}{\sim}\, F(\xi)$ and $
\delta(F(z), F(\xi))\leq \delta(z,\xi) $. This completes the proof.
\end{proof}

It is well-known (see \cite{Po-poisson}, \cite{Po-holomorphic},
\cite{Po-pluriharmonic}) that if $F:=(F_1,\ldots, F_m)$ is a
contractive  ($\|F\|_\infty\leq 1$) free holomorphic function  with
coefficients in $B(\cE)$, then its boundary function is in
$R_n^\infty\bar\otimes B(\cE^{(m)},\cE)$ and, consequently, the
evaluation map $\BB_n\ni z\mapsto F(z)\in B(\cE)^{(m)}$ is a
contractive  operator-valued multiplier of the  Drury-Arveson space
(\cite{Dr}, \cite{Arv}). Moreover, any such a contractive multiplier
has  this kind of representation.

Finally, we remark that,  in the particular case when  $m=n=1$, the
second part of Theorem \ref{Sch-part} implies  Suciu's result
\cite{Su2}.

\bigskip

       %

      \end{document}